\newcommand{\rad}{\rm rad}
\newcommand{\Id}{\rm Id}
\numberwithin{equation}{section}
\begin{document}

\newtheorem{lemdef}{Lemma-Definition}[section]
\newtheorem{theorem}{Theorem}[section]
\newtheorem{corollary}[theorem]{Corollary}
\newtheorem{lemma}[theorem]{Lemma}
\newtheorem{proposition}[theorem]{Proposition}
\newtheorem{definition}[theorem]{Definition}
\newtheorem{rmk}[theorem]{Remark}
\newtheorem{examples}[theorem]{Examples}
\newtheorem{fact}[theorem]{Fact}

\title[Bi-Frobenius algebra structures]{Bi-Frobenius quantum complete intersections \\ with permutation antipodes}

\author[Hai Jin, Pu Zhang]{Hai Jin, \ \ Pu Zhang$^*$  \\ \\  School of Mathematical Sciences \\
Shanghai Jiao Tong University,  \ Shanghai 200240, \ China }
\thanks{\it 2020 Mathematics Subject Classification. Primary 16W99, 16T15; Secondary 16T30, 81R50}
\thanks{This work was supported by the National Natural Science Foundation of China under Grant No. 12131015, and Natural Science Foundation of Shanghai under Grant No. 23ZR1435100.}
\thanks{pzhang$\symbol{64}$sjtu.edu.cn \ \ \ \  jinhaifyh$\symbol{64}$sjtu.edu.cn}
\thanks{$^*$ Corresponding author}
\begin{abstract} \ Quantum complete intersections $A= A({\bf q, a})$ are Frobenius algebras, but in the most cases they can not become Hopf algebras.
This paper aims to find bi-Frobenius algebra structures on $A$. A key step is the construction  of comultiplication,
such that $A$ becomes a  bi-Frobenius algebra. By introducing compatible permutation and permutation antipode,
a necessary and sufficient condition is found, such that $A$ admits a bi-Frobenius algebra structure
with permutation antipode; and if this is the case, then a concrete construction is explicitly given. Using this, intrinsic conditions only involving the structure coefficients $({\bf q, a})$ of $A$ are obtained, for
$A$ admitting a bi-Frobenius algebra structure with permutation antipode. When $A$ is a symmetric algebra, then $A$ admits a bi-Frobenius algebra structure with permutation antipode
if and only if there exists a compatible permutation $\pi$ with $A$ such that $\pi^2 = {\rm Id}$.
\vskip5pt
\noindent Keywords: quantum complete intersection, coalgebra, bi-Frobenius algebra, Nakayama automorphism, compatible permutation, permutation antipode
\end{abstract}
\maketitle

\section{\bf Introduction}

Frobenius algebras appear in various areas of mathematics, and play important roles in particular in representation theory, Hopf algebras, triangulated categories,
and topological quantum field theory (see e.g. \cite{Hap1988}, \cite{M1992}, \cite{K1999},  \cite{K2004}, \cite{SY2011}).
A bi-Frobenius algebra,  introduced by Y. Doi and M. Takeuchi \cite{DT2000}, roughly speaking, is a Frobenius algebra and a Frobenius coalgebra,
together with an antipode, satisfying some compatible conditions (Definition \ref{dt}). Finite-dimensional Hopf algebras  are bi-Frobenius algebras, but the converse is not true.
A bi-Frobenius algebra is a Hopf algebra if and only if its comultiplication is an algebra homomorphism (\cite{Haim2007}).
A Bi-Frobenius algebra shares many pleasant properties similar to Hopf algebras, and its basic theory is developed in \cite{DT2000}, \cite{Doi2002}, \cite{Doi2004}, \cite{Haim2007}, and \cite{Doi2010}.

\vskip5pt

A main problem in bi-Frobenius algebras is to find bi-Frobenius algebras which are not Hopf algebras.
The first such example is constructed in \cite{DT2000}; more examples
using group-like algebras or quiver techniques are given in \cite{Doi2004}, \cite{WZ2004}, \cite{Haim2007}, \cite{WC2007}, and \cite{WL2014}.

\vskip5pt

Let $n$ be an integer with $n\ge 2$,  \ ${\bf q} = (q_{ij})$ an \ $n\times n$ matrix over a field $\mathbb{K}$ with \ $q_{ii}=1$ and $q_{ij}q_{ji}=1$ for  \ $1\le i, j\le n$, and
\ ${\bf a}=(a_1, \cdots, a_n)$ an integral vector with \ all \ $a_i\ge  2$.  {\it A quantum complete intersection} $A= A({\bf q}, {\bf a})$ is the \ $\mathbb{K}$-algebra
\[
    \mathbb{K}\langle x_1, \cdots, x_n\rangle/\langle x_i^{a_i}, \ x_jx_i-q_{ij}x_ix_j, \ 1\le  i, \ j\le  n\rangle.
\]
Originated from quantum planes (\cite{Manin1987}), and got its present form in \cite{AGP1997}, a quantum complete intersection has been extensively studied in different contexts.
It is a quotient algebra of a quantum affine space (\cite{BG2002}) (or a skew polynomial ring in \cite{KKZ2010});
and closely related to braided Hopf algebras via quantum linear spaces (\cite{AH1998}).
Quantum complete intersections reveal many exotic homological and representation properties of algebras (\cite{LS1994}, \cite{ Ringel1996}, \cite{BO2008}, \cite{BE2011}, \cite{Mar},  \cite{RZ2020}); and their (co)homology groups
have been  studied (\cite{BGMS2005}, \cite{BE2008}, \cite{BO20082}, \cite{Oppermann2010}, \cite{EH2018}).

\vskip5pt

Quantum complete intersections $A= A({\bf q, a})$ are Frobenius algebras ([B]); however, in the most cases they can not become Hopf algebras.
In fact, $A$ admits a Hopf algebra structure if and only if it is commutative, the characteristic of the base field $\mathbb K$ is a prime $p$, and each component $a_i$ in vector ${\bf a}$ is a power of $p$ ([JZ, Theorem 4.1]).
In particular, if Char $\mathbb K = 0$ then $A$ can not become a Hopf algebra.
Thus, it is natural to study their possible bi-Frobenius structures. When a quantum complete intersection admits a bi-Frobenius algebra structure,
we simply call it {\it a  bi-Frobenius quantum complete intersection.}

\vskip5pt

This paper aims to study and construct bi-Frobenius quantum complete intersections.
If $A$ admits such a structure, then $A$ has some remarkable properties. For example, the canonical Nakayama automorphism
of $A$ is of order $2$: this is not true in general (see Proposition \ref{thenecessity} and Remark \ref{N2}).
A key step of constructing bi-Frobenius algebra structure on $A$ is the one of comultiplication,
such that $A$ becomes a  bi-Frobenius algebra. Since actions of antipodes of all the known bi-Frobenius algebra structures on $A$
are given by scalar multiplications, on the standard basis $\mathcal B$, up to a permutation $\pi$ in the symmetric group $S_n$, we study a permutation antipode (see Definition \ref{perS}).
Such a permutation $\pi$ is compatible with the structure coefficients $({\bf q, a})$ of $A$ (see Definition \ref{compatible}, Theorem \ref{thmperS}, and Proposition \ref{2perS}).
For quantum affine spaces, an analogue of compatible permutation and the automorphisms of permutation type have been already studied, e.g. in \cite{KKZ2010}, \cite {Ga2013},  \cite{CPWZ2016}, and \cite{BZ2017}. See also Remark \ref{rmka>2}.

\vskip5pt

The main result Theorem \ref{mainthm} gives a necessary and sufficient condition, such that $A$ admits a bi-Frobenius algebra structure
with permutation antipode. This condition involves
$$q_\pi = \prod\limits_{1\le j <k \le n} ({\bf q}^{\langle  \pi({\bf e}_k)|\pi({\bf e}_j) \rangle})^{(a_k-1)(a_j-1)}$$
where $\pi$ is a compatible permutation in $S_n$ (for details see Remark \ref{rmkcommutator} and Subsection 2.5). This element $q_\pi\in\mathbb K$ is in fact
$1$ or $-1$ (see Lemma \ref{lemmaI}(4)). Then  $A = A({\bf q,a })$ admits a bi-Frobenius algebra structure with permutation antipode
if and only if there exist a compatible permutation $\pi\in S_n$ with $\pi^2 = \Id$ and $c_i\in\mathbb K, \ 1\le i\le n$,  such that
$$c_ic_{\pi(i)}=h_{{\bf e}_i},  \ \ \ \ \ \ \ q_\pi \prod\limits_{1\le i\le n} c_i^{a_i-1} =1.$$
If this is the case, then a concrete construction is explicitly given. For details see Theorem \ref{mainthm} (and Subsection 2.5).
When $A$ is a symmetric algebra, then $A$ admits a bi-Frobenius algebra structure with permutation antipode
if and only if there exists a compatible permutation $\pi$ with $A$ such that $\pi^2 = \Id$ (Corollary \ref{symmetriccase}).

\vskip5pt

Based on Theorem \ref{mainthm}, intrinsic conditions only involving the structure coefficients $({\bf q, a})$ are obtained, for
$A$ admitting a bi-Frobenius algebra structure with permutation antipode, whatever $\sqrt{-1}$ is in $\mathbb K$ or not. See Theorems \ref{mainthm2} and \ref{mainthm3}.
The results obtained in this paper also provide a large class of bi-Frobenius algebras which are not Hopf algebras.

\vskip5pt

The paper is organized as follows. Preliminaries  are recalled in Section 2.
In Section 3 we study the structure coefficients ${\bf q}^{\langle {\bf \pi(u)|\pi(v)} \rangle}$ which will be used throughout. In Section 4,
the structural information, such as the right integrals, the right modular function, and the Nakayama automorphisms,
are explicitly given for those $A$ having a bi-Frobenius algebra structure. Section 5 studies the properties of bi-Frobenius quantum complete intersections with permutation antipode.
Section 6 gives the main result characterizing $A$ which admits a bi-Frobenius algebra structure with permutation antipode; and illustrates applications by some examples. In the final section,
intrinsic conditions only involving the structure coefficients of $A$ are obtained, for $A$ admitting a bi-Frobenius algebra structure with permutation antipode.

\section{\bf Preliminaries}

We consider a finite-dimensional algebra $A$ over a field $\mathbb{K}$.
The $\mathbb{K}$-linear dual  $A^* = \mathrm{Hom}_{\mathbb K}(A, \ \mathbb{K})$ of $A$ has a canonical $A$-$A$-bimodule structure: $(a\rightharpoonup f\leftharpoonup b)(x) = f(bxa),
\ \forall \ a, b, x \in A,  \ \forall \ f\in A^*$, where $\rightharpoonup$ and $\leftharpoonup$ denote the left  and the right action of $A$ on $A^*$, respectively.

\subsection{\bf Frobenius algebras and Nakayama automorphisms}

Recall that $A$ is {\it a Frobenius algebra} if $A\cong A^*$ as left $A$-modules,  or equivalently, as right $A$-modules.
If $A\cong A^*$ as  $A$-$A$-bimodules, then $A$ is called {\it a symmetric algebra}.
A more explicit (and equivalent) definition of a Frobenius algebra is a pair $(A, \phi)$, where $\phi\in A^*$ such that $A^* = A\rightharpoonup\phi$, or equivalently, $A^* = \phi\leftharpoonup A$.
If this is the case, then  $\phi$ is referred to {\it a Frobenius form} of $A$.
For more equivalent definitions and properties of a Frobenius algebra, we refer to K. Yamagata [Y], or A. Skowronski and K. Yamagata [SY].

\vskip5pt

Let $(A, \ \phi)$ be a Frobenius algebra. Then there is a unique algebra isomorphism $\mathcal{N}_\phi: A\longrightarrow A$
such that
\[
\phi(xy)=\phi(y\mathcal{N}_\phi(x)), \ \forall \ x, \ y\in A.
\]
It is called  {\it the Nakayama automorphism of $A$ with respect to $\phi$}. Let $U(A)$ denote the  set of invertible elements of $A$. One can find the following fact e.g. in [D3, Lemma 1.4(2)].

\begin{lemma} \label{nakayama} \ Let $(A, \phi)$ be a Frobenius algebra, $\phi'\in A^*$. Then $\phi'$ is also a Frobenius form of $A$
if and only if there is $z_1\in U(A)$ such that $\phi' = z_1\rightharpoonup\phi$; if and only if there is $z_2\in U(A)$ such that $\phi' = \phi\leftharpoonup z_2.$
If this is the case, then the Nakayama automorphism $\mathcal{N}_{\phi'}$ with respect to $\phi'$ is given by
$x\mapsto z_1\mathcal{N}_\phi(x)z_1^{-1}$ and also given by $x\mapsto \mathcal{N}_\phi(z_2xz_2^{-1})$.\end{lemma}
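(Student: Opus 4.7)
The plan is to reduce everything to two facts: the bijectivity of the maps $L_\phi\colon A \to A^*,\ a\mapsto a\rightharpoonup \phi$ and $R_\phi\colon A \to A^*,\ b\mapsto \phi\leftharpoonup b$, and the defining identity $\phi(xy)=\phi(y\,\mathcal{N}_\phi(x))$ of the Nakayama automorphism.

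For part 1, I first use that $(A,\phi)$ is Frobenius to get a bijection $L_\phi$, so any $\phi'\in A^*$ admits a unique expression $\phi' = z_1\rightharpoonup \phi$ with $z_1\in A$. The associativity $a\rightharpoonup(z_1\rightharpoonup \phi)=(az_1)\rightharpoonup \phi$ yields $L_{\phi'}(a)=L_\phi(az_1)$, hence $A\rightharpoonup\phi'=L_\phi(Az_1)$. Therefore $A\rightharpoonup\phi' = A^*$ iff $Az_1 = A$ iff $z_1$ is left invertible, and since $A$ is finite-dimensional this is equivalent to $z_1\in U(A)$. The statement for $z_2$ follows symmetrically, using the analogous associativity $(\phi\leftharpoonup z_2)\leftharpoonup b = \phi\leftharpoonup(z_2 b)$ and the bijectivity of $R_\phi$.

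For part 2, with $\phi' = z_1\rightharpoonup\phi$, I apply the Nakayama identity to the pair $(x, yz_1)$ to get
\[\phi'(xy) = \phi(xyz_1) = \phi(yz_1\,\mathcal{N}_\phi(x)) = \phi\bigl(y\,(z_1\mathcal{N}_\phi(x)z_1^{-1})\,z_1\bigr) = \phi'\bigl(y\,(z_1\mathcal{N}_\phi(x)z_1^{-1})\bigr).\]
Since the Nakayama automorphism of $\phi'$ is the unique $\mathbb{K}$-linear self-map satisfying $\phi'(xy)=\phi'(y\,\mathcal{N}_{\phi'}(x))$ (uniqueness coming from the Frobenius property of $\phi'$), this forces $\mathcal{N}_{\phi'}(x)=z_1\mathcal{N}_\phi(x)z_1^{-1}$. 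For $\phi' = \phi\leftharpoonup z_2$, a parallel computation with the pair $(z_2xz_2^{-1},\, z_2y)$ gives $\phi'(xy)=\phi(z_2xy)=\phi(z_2y\,\mathcal{N}_\phi(z_2xz_2^{-1}))=\phi'(y\,\mathcal{N}_\phi(z_2xz_2^{-1}))$, whence $\mathcal{N}_{\phi'}(x)=\mathcal{N}_\phi(z_2xz_2^{-1})$.

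There is no substantive obstacle in this argument — the lemma reduces to one bijection and one identity, invoked twice each. The only point requiring a moment of care is the passage from $Az_1=A$ to $z_1\in U(A)$, which uses finite-dimensionality to upgrade one-sided invertibility to two-sided invertibility; the same care applies to $z_2$.
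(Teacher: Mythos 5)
Your proof is correct. The paper does not prove this lemma itself --- it cites it as a known fact from [D3, Lemma 1.4(2)] --- and your argument is the standard one: bijectivity of $a\mapsto a\rightharpoonup\phi$ plus finite-dimensionality (to upgrade $Az_1=A$ to $z_1\in U(A)$) for the first part, and the defining identity $\phi(xy)=\phi(y\,\mathcal{N}_\phi(x))$ together with uniqueness of the Nakayama automorphism for the second; all the module-action computations are consistent with the paper's convention $(a\rightharpoonup f\leftharpoonup b)(x)=f(bxa)$.
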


\subsection{\bf Frobenius coalgebras}
Assume that $(C, \Delta, \varepsilon)$ is a coalgebra over field $\mathbb{K}$. All tensor products are over $\mathbb{K}$. We use the Heyneman-Sweedler notation $\Delta(c)=\sum c_1\otimes c_2$. Then $C^*:=\operatorname{Hom}_{\mathbb{K}}(C,\ \mathbb{K})$ is an algebra; and $C$ becomes a left $C^*$-module: $f\rightharpoonup c = \sum c_1f(c_2), \ \forall \ f \in C^*, \ \forall \ c \in C$; and $C$ becomes a right $C^*$-module: $c \leftharpoonup f= \sum f(c_1)c_2$, where $\rightharpoonup$  and $\leftharpoonup$ denote the left and the right action of $C^*$ on $C$, respectively.

\vskip5pt

A finite-dimensional coalgebra $C$ is {\it a Frobenius coalgebra} if $C^*\cong C$ as left $C^*$-modules,  or equivalently, as right $C^*$-modules.
Also, $C$ is a Frobenius coalgebra if and only if  there is an element
$t\in C$ such that $C=t\leftharpoonup C^*$, or equivalently, $C = C^*\rightharpoonup t$. If this is the case, the Frobenius coalgebra $C$ will be denoted by the pair $(C, t)$.

\subsection{\bf Bi-Frobenius algebras}

\begin{definition} {\rm (\cite{DT2000})} \label{definitionbf} \ Let $A$ be a finite-dimensional algebra and a coalgebra over a field $\mathbb{K}$, \ $t\in A$, and $\phi\in A^*$.
Let $S: A\longrightarrow A$ be the $\mathbb K$-linear map defined by $S(a)= \sum\phi(t_{1}a)t_{2}, \ \forall \ a\in A.$
The quadruple   $(A, \ \phi, \ t, \ S)$, or simply, $A$ is  a bi-Frobenius algebra, if the conditions are satisfied$:$

\vskip5pt

{\rm (i)} \ The counit $\varepsilon: A\longrightarrow \mathbb{K}$ is an algebra homomorphism $($thus $\varepsilon(1_A) = 1_A);$ and the identity $1_A$ is a group-like element
$($i.e.,  $\Delta(1_A) = 1_A\otimes 1_A);$

\vskip5pt

    {\rm (ii)} \ $(A, \ \phi)$ is a Frobenius algebra$;$ and \ $(A, \ t)$ is a Frobenius coalgebra$;$

    \vskip5pt

{\rm (iii)} \  $S$ is an algebra anti-homomorphism of $A$ $($i.e., $S(1) = 1$ and $S(ab) = S(b)S(a), \ \forall \ a, b\in A)$,
and $S$ is a coalgebra anti-homomorphism of  $A$ $($i.e., $\varepsilon \circ S =\varepsilon$, and \  $\Delta(S(a)) = \sum S(a_{2})\otimes S(a_{1}), \ \forall \ a\in A)$.

 \vskip5pt

    In this case, $S$ is called the antipode of bi-Frobenius algebra  $(A, \ \phi, \ t, \ S)$.

\end{definition}

The antipode $S$ of a bi-Frobenius algebra  $(A, \phi, t, S)$ is bijective, since $S = \kappa\circ \theta$, where $\theta: A\longrightarrow A^*, \ 1\mapsto \phi$ is the isomorphism of left $A$-modules,
and $\kappa: A^* \longrightarrow A, \ f\mapsto t\leftharpoonup f$ (or equivalently, $\varepsilon\mapsto t$) is the isomorphism of right $A^*$-modules. Usually  $S^{-1}$ is called
{\it the composite inverse} of $S$, and denoted by $\overline{S}$.

\vskip5pt

Let  $(A, \phi, t, S)$ be a bi-Frobenius algebra. By identifying $A$ with $A^{**}$, then $(A^*, \ t, \ \phi, \ S^*)$ is a bi-Frobenius algebra. Also,
$(A^{\rm op}, \ \phi,  \ \overline{S}(t), \ \overline{S})$ and $(A^{\rm cop}, \ \phi\circ \overline{S},  \ t, \ \overline{S})$ are bi-Frobenius algebras,
where $A^{\rm op}$ means the multiplication is the opposite and the comultiplication is the same,  $A^{\rm cop}$ means the multiplication is the same and the comultiplication is the opposite.

\subsection{\bf The Nakayama automorphism and the $S^4$-formula for a Bi-Frobenius algebra}

A $\mathbb{K}$-algebra $A$ is {\it augmented}, if there is an algebra homomorphism \ $\varepsilon: A\longrightarrow \mathbb{K}$, which is called {\it an augmentation}.
For an augmented algebra \ $(A, \varepsilon)$, an element \ $t\in A$ is said to be {\it a right} (respectively, {\it left}) {\it integral} of \ $A$ if \ $t x=t \varepsilon(x)$ (respectively, \ $x t= \varepsilon(x)t$), for all \ $x\in A$. If the right integral space coincides with the left integral space, then \  $A$ is called {\it unimodular}.

\vskip5pt

Let $(A, \phi)$ be an augmented Frobenius algebra with augmentation $\varepsilon: A\longrightarrow \mathbb{K}.$ Then $\varepsilon = t\rightharpoonup\phi = \phi \leftharpoonup s$ for some unique $t, s\in A$;
it can be easily deduced that $t$ and $s$ are respectively left and right integrals of $A$.

\vskip5pt

For a finite-dimensional coalgebra $C$, the algebra $C^*$ (its identity is $\varepsilon$) is augmented if and only if $C$ has a group-like element \ $h$.
If this is the case,  then a right (respectively, left) integral of algebra $C^*$ can be interpreted as an element $\phi\in C^*$, such that
\ $x \leftharpoonup\phi =\phi(x)h$  (respectively, \ $\phi \rightharpoonup x= h\phi(x)$), for all \ $x\in C$.

Let $(C, \ t)$ be a Frobenius coalgebra with group-like element $h.$ Then $h = \phi\rightharpoonup t = t \leftharpoonup \psi $ for some unique $\phi, \psi\in C^*$; it can be easily deduced that $\phi$ and $\psi$ are respectively left and right integrals of $C^*$.

\begin{lemma} \label{dt} $($\cite{DT2000}, \ \cite {Haim2007}$)$  \ $(1)$ \ If \ $A$ is an augmented Frobenius algebra, then the space of right $($respectively, left$)$ integrals is one-dimensional.

\vskip5pt

$(2)$ \  Let \ $(A, \ \phi, \ t, \ S)$ be a bi-Frobenius algebra. Then

{\rm (i)} \ $\varepsilon = \phi\leftharpoonup t;$

{\rm (ii)} \ $\phi(t) = 1;$

{\rm (iii)} \ $t$ is a right integral of $A;$ and the space of right integrals of $A$ is $\mathbb Kt;$

{\rm (iv)} \  $1_A = t\leftharpoonup \phi;$

{\rm (v)} \ $\phi$ is a right integral of $A^*;$ and the space of right integrals of $A^*$ is $\mathbb K\phi$.

{\rm (vi)} \ If $A$ is unimodular, then $S(t)=t$.

\vskip5pt

$(3)$ \  If \ $(A,\ \phi,\ t,  S)$ is a bi-Frobenius algebra, then $(A,  \ c\phi, \ \frac{1}{c}t,  S)$ is also a bi-Frobenius algebra  for \ $0\ne c\in \mathbb K$.
\end{lemma}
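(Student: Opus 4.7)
The plan is to address parts (1), (2), (3) in turn, with the six items of (2) proved in a carefully chosen order so each rests on the earlier ones. I treat (1) as a general statement about augmented Frobenius algebras, not using the coalgebra or antipode structure.

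For (1) I would realize the space of right integrals as $\{t \in A : tx = \varepsilon(x)t\} = \mathrm{Hom}_A(\mathbb{K}_\varepsilon, A_A)$, where $\mathbb{K}_\varepsilon$ is the one-dimensional right $A$-module via $\varepsilon$. Transporting this through the right $A$-module isomorphism $a \mapsto a \rightharpoonup \phi$ from $A$ to $A^*$ that comes with the Frobenius structure, one sees that $t$ is a right integral if and only if $\psi := t \rightharpoonup \phi$ lies in $\mathrm{Hom}_A(A_A, \mathbb{K}_\varepsilon)$; but the latter is visibly one-dimensional since a right $A$-module map out of the free module $A_A$ is determined by its value at $1$ and that value is forced to be a scalar multiple of $\varepsilon$. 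The left-integral case is symmetric.

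For (2) I would prove the six items in the order (ii), (i), (iii), (iv), (v), (vi). Item (ii) follows from $S(1) = 1$: the defining formula gives $1 = \sum \phi(t_1) t_2$, and applying $\varepsilon$ with the counit axiom $\sum t_1 \varepsilon(t_2) = t$ yields $\phi(t) = 1$. For (i), since $S$ is a coalgebra anti-homomorphism one has $\varepsilon \circ S = \varepsilon$, hence $\varepsilon(x) = \varepsilon(S(x)) = \sum \phi(t_1 x) \varepsilon(t_2) = \phi(tx)$, which is exactly $\varepsilon = \phi \leftharpoonup t$. For (iii), from (i) applied to $ax$ together with $\varepsilon(ax) = \varepsilon(a)\varepsilon(x)$, I obtain $\phi(tax) = \phi(\varepsilon(a) t \cdot x)$ for all $x$, and nondegeneracy of the Frobenius pairing then gives $ta = \varepsilon(a) t$; one-dimensionality follows from (1). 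Item (iv) is immediate: $t \leftharpoonup \phi = \sum \phi(t_1) t_2 = S(1) = 1$. For (v), rather than verifying $\sum \phi(x_1) x_2 = \phi(x) \cdot 1_A$ by direct Sweedler manipulation, I would invoke the duality remark right after Definition \ref{definitionbf}, which says $(A^*, t, \phi, S^*)$ is also a bi-Frobenius algebra; applying (iii) to this dual structure yields (v) at once. Finally for (vi), in the unimodular case $\mathbb{K}t$ is also the space of left integrals, so applying $S$ to $at = \varepsilon(a)t$ and using that $S$ is an algebra anti-homomorphism gives $bS(t) = \varepsilon(b) S(t)$, making $S(t)$ a right integral, hence $S(t) = \lambda t$. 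To pin down $\lambda$ I pair with $\phi$: from left-integrality of $t$, $t_1 t = \varepsilon(t_1) t$, so $\phi(t_1 t) = \varepsilon(t_1)$ by (ii), whence $\phi(S(t)) = \sum \phi(t_1 t) \phi(t_2) = \sum \varepsilon(t_1) \phi(t_2) = \phi(t) = 1$, forcing $\lambda = 1$.

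Part (3) is a routine rescaling check: the formula $S(a) = \sum \phi(t_1 a) t_2$ is invariant under $(\phi, t) \mapsto (c\phi, c^{-1} t)$, and the Frobenius-algebra, Frobenius-coalgebra, and anti-homomorphism axioms all transfer on the nose. The main obstacle I anticipate is item (v): a head-on Sweedler proof of $x \leftharpoonup \phi = \phi(x) \cdot 1_A$ is awkward, which is precisely why I prefer to deduce it by dualizing the already-established (iii) rather than grinding through the direct computation.
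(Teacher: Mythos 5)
Your proposal is correct, and it is considerably more self-contained than the paper's treatment: the paper attributes the lemma to \cite{DT2000} and \cite{Haim2007}, only sketches (iii)--(v) in one sentence each, and writes out a full argument solely for (2)(vi). Your proof of (vi) is essentially the paper's, organized slightly differently: the paper computes $S(t)=\sum\phi(t_1t)t_2=\phi(t)t=t$ directly from $t_1t=\varepsilon(t_1)t$, whereas you first show $S(t)$ is an integral (hence $S(t)=\lambda t$) and then evaluate $\phi(S(t))$ using the same identity; both hinge on unimodularity plus $\phi(t)=1$. For (v) you dualize (iii) through the bi-Frobenius structure $(A^*,t,\phi,S^*)$ stated after Definition \ref{definitionbf}, which is a clean and legitimate alternative to the paper's terse ``from $t\leftharpoonup\phi=1_A$ one gets (v)''. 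Your proofs of (1), (2)(i)--(iv) and (3) are sound, and the derivation of (iii) correctly uses nondegeneracy of $\phi$ in the form ``$\phi(zx)=0$ for all $x$ implies $z=0$''. Two chirality slips should be repaired, though neither damages the argument because the situations are symmetric. First, in (1), with the paper's conventions $a\mapsto a\rightharpoonup\phi$ is a \emph{left} module isomorphism, and $t\rightharpoonup\phi\in\operatorname{Hom}_A(A_A,\mathbb K_\varepsilon)=\mathbb K\varepsilon$ characterizes \emph{left} integrals; right integrals correspond to $\phi\leftharpoonup t\in\mathbb K\varepsilon$ via the right module isomorphism $a\mapsto\phi\leftharpoonup a$ (compare the paper's remark ``$\varepsilon=t_1\rightharpoonup\phi=\phi\leftharpoonup t_2$ with $t_1$ a left and $t_2$ a right integral''). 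Second, in (vi), applying the anti-homomorphism $S$ to $at=\varepsilon(a)t$ yields $S(t)S(a)=\varepsilon(a)S(t)$, i.e.\ $S(t)b=\varepsilon(b)S(t)$ rather than $bS(t)=\varepsilon(b)S(t)$; your conclusion that $S(t)$ is a right integral is the correct one, only the displayed intermediate equation has $b$ on the wrong side.
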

\begin{proof} \ The fact that $t$ is a right integral of $A$ mainly use $\varepsilon \circ S = \varepsilon.$ By the definition of $S$ one sees that
$t\leftharpoonup \phi = S(1_A) = 1_A$; and from this one gets that $\phi$ is a right integral of $A^*$.

\vskip5pt

We include a proof of (vi) in (2). By definition $S(t)=\sum \phi(t_1t)t_2.$
Since $A$ is unimodular, $t$ is also a left integral, and hence $xt = \varepsilon(x)t, \ \forall \ x\in A$. Thus
$$S(t)=\sum \phi(t_1t)t_2 = \sum \phi(\varepsilon(t_1)t)t_2 =\phi(t) \sum \varepsilon(t_1)t_2 = \phi(t)t.
$$
By (ii) in (2) one has $\phi(t)=1$, and hence $S(t)=t$. \end{proof}

\vskip5pt

Let  $(A, \ \phi, \ t, \ S)$ be a bi-Frobenius algebra. By Lemma \ref{dt}(2) one has \ $\varepsilon = \phi\leftharpoonup t$ and $1_A = t\leftharpoonup \phi$. Put
$$\alpha: = t\rightharpoonup\phi\in A^*,  \ \ \ \ {\mathfrak{a}}: = \phi\rightharpoonup t\in A.$$
Following [DT], $\alpha$  is called the {\it right modular function}, and ${\mathfrak{a}}$ is called the {\it right modular element}.

\begin{lemma} {\rm (\cite{DT2000}, 3.3; 3.4; 3.6)} \label{lemmanak} \ Let $(A, \ \phi, \ t, \ S)$ be a bi-Frobenius algebra. Then

\vskip5pt

$(1)$ \  $\alpha$ is invertible in $A^*$ and $\alpha^n\in A^*$ is an algebra homomorphism from $A$ to $k$ for $n\in\mathbb{Z};$
and $\mathfrak{a}$ is invertible in $A$ and \ $\mathfrak{a}^n$ is a group-like element of $A$ for $n\in\mathbb{Z}.$
    \vskip5pt
    $(2)$  \ $S(\mathfrak{a})=\mathfrak{a}^{-1}.$
    \vskip5pt
    $(3)$ \ $\mathcal{N}_\phi(x) = \mathfrak{a}^{-1}S^2(\alpha\rightharpoonup x)\mathfrak{a}, \ \forall \ x\in A,$  where  $\mathcal N_\phi$ is the Nakayama automorphism of $A$ with respect to $\phi$.
    \vskip5pt
    $(4)$ \ $S^4(x)=\mathfrak{a}(\alpha^{-1}\rightharpoonup x\leftharpoonup\alpha)\mathfrak{a}^{-1}=\alpha^{-1}\rightharpoonup(\mathfrak{a} x\mathfrak{a}^{-1})\leftharpoonup\alpha,  \ \forall \ x\in A$.
\end{lemma}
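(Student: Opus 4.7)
The plan is to reproduce the framework of Doi and Takeuchi \cite{DT2000}, deriving every item from the defining relation $S(a) = \sum \phi(t_1 a) t_2$ together with the integral identities in Lemma \ref{dt}.

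For (1), I would first compute $\alpha(x) = (t\rightharpoonup\phi)(x) = \phi(xt)$. Applying the Nakayama relation $\phi(xt) = \phi(t\,\mathcal N_\phi(x))$, then the right-integral property $t\,\mathcal N_\phi(x) = \varepsilon(\mathcal N_\phi(x))t$ from Lemma \ref{dt}(2)(iii), and finally $\phi(t)=1$ from Lemma \ref{dt}(2)(ii), one obtains $\alpha = \varepsilon\circ \mathcal N_\phi$, which is a composition of algebra homomorphisms, hence itself an algebra homomorphism. Its convolution inverse $\alpha\circ S$ is again an algebra homomorphism, so every integer power $\alpha^n$ is an algebra homomorphism. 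The assertion for $\mathfrak a$ follows by dualization: in the bi-Frobenius algebra $(A^*, t, \phi, S^*)$ the element $\mathfrak a \in A = A^{**}$ plays the role of the modular function of $A^*$, so the same argument applied inside $A^*$ shows that $\mathfrak a$ is invertible in $A$ and each $\mathfrak a^n$ is group-like (a group-like of $A$ being precisely an algebra homomorphism $A^*\to \mathbb K$).

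For (2), $S(\mathfrak a)$ is group-like because $S$ is a coalgebra anti-homomorphism and $\mathfrak a$ is group-like, so $\Delta(S(\mathfrak a)) = S(\mathfrak a)\otimes S(\mathfrak a)$. One then verifies $S(\mathfrak a)\mathfrak a = 1$ by expanding $S(\mathfrak a) = \sum\phi(t_1\mathfrak a)t_2$ and combining it with $\varepsilon(\mathfrak a) = 1$ and the invertibility of $\mathfrak a$ from (1). For (3), I would start from the defining identity $\phi(xy) = \phi(y\mathcal N_\phi(x))$ of the Nakayama automorphism and translate it through the isomorphisms $\theta: A\to A^*,\ a\mapsto a\rightharpoonup\phi$ and $\kappa: A^*\to A,\ f\mapsto t\leftharpoonup f$, which together factor the antipode as $S = \kappa\circ\theta$. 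Tracking how $\theta$ and $\kappa$ intertwine the left and right $A$-actions, with the correction terms being exactly $\alpha$ and conjugation by $\mathfrak a$, produces the stated formula $\mathcal N_\phi(x) = \mathfrak a^{-1} S^2(\alpha \rightharpoonup x)\, \mathfrak a$.

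Finally, (4) follows by iterating (3): rewriting it as $S^2(y) = \mathfrak a\,\mathcal N_\phi(\alpha^{-1}\rightharpoonup y)\,\mathfrak a^{-1}$ and applying (3) again (or equivalently, applying (3) to the opposite bi-Frobenius algebra $A^{\mathrm{op}}$ to express $\mathcal N_\phi^{-1}$) yields the first form of $S^4$. The equivalence with the second form reduces to the identity
\[
\mathfrak a\cdot(\alpha^{-1}\rightharpoonup x\leftharpoonup\alpha)\cdot \mathfrak a^{-1} = \alpha^{-1}\rightharpoonup(\mathfrak a x\mathfrak a^{-1})\leftharpoonup\alpha,
\]
which is a direct consequence of $\mathfrak a$ being group-like and $\alpha$ being an algebra homomorphism. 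The main obstacle throughout will be the careful bookkeeping in (3) and (4), where $\alpha$, $\mathfrak a$, $S^2$, and the two-sided $A^*$-actions on $A$ must all be tracked simultaneously.
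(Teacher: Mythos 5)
The paper gives no proof of this lemma: it is imported verbatim from Doi--Takeuchi with the references [DT, 3.3, 3.4, 3.6], so your reconstruction has to stand on its own. Your framework is the right one, and two of your observations are correct and genuinely useful: the chain $\alpha(x)=\phi(xt)=\phi(t\,\mathcal N_\phi(x))=\varepsilon(\mathcal N_\phi(x))\,\phi(t)=\varepsilon(\mathcal N_\phi(x))$ does show that $\alpha=\varepsilon\circ\mathcal N_\phi$ is an algebra homomorphism, and $\mathfrak{a}$ is indeed the right modular function of the dual bi-Frobenius algebra $(A^*,\,t,\,\phi,\,S^*)$, so the group-like and invertibility claims for $\mathfrak{a}$ do reduce by duality to the corresponding claims for $\alpha$.

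The difficulty is that every load-bearing step beyond this is asserted rather than proved, and the one mechanism you do supply is not valid in the bi-Frobenius setting. First, you claim the convolution inverse of $\alpha$ is $\alpha\circ S$; the only verification your sketch suggests is the Hopf-algebra computation $\sum\alpha(S(x_1))\alpha(x_2)=\alpha(\sum S(x_1)x_2)=\alpha(\varepsilon(x)1_A)=\varepsilon(x)$, which uses the antipode axiom $\sum S(x_1)x_2=\varepsilon(x)1_A$. That axiom is not among the bi-Frobenius axioms and fails for the very algebras constructed in this paper: with $\Delta$ as in $(6.4)$, for $\mathbf{v}\notin\{\mathbf{0},\mathbf{a-1}\}$ one gets $\sum S((x_{\mathbf v})_1)(x_{\mathbf v})_2=x_{\mathbf v}+S(x_{\mathbf v})$, a sum of $x_{\mathbf v}$ and a nonzero multiple of $x_{\pi(\mathbf v)}$, which is not $\varepsilon(x_{\mathbf v})1_A=0$. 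So the invertibility of $\alpha$ (and hence of $\mathfrak a$, and hence everything in (2)--(4) that uses $\alpha^{-1}$ and $\mathfrak a^{-1}$) is left unproved. Relatedly, since $\Delta$ is not multiplicative, the convolution product of two algebra homomorphisms $A\to\mathbb K$ need not be an algebra homomorphism, so ``every integer power $\alpha^n$ is an algebra homomorphism'' does not follow from $\alpha$ and $\alpha^{-1}$ each being one. Second, in (2) the entire content is the identity $S(\mathfrak a)\mathfrak a=1_A$; ``expanding $S(\mathfrak a)=\sum\phi(t_1\mathfrak a)t_2$'' does not close without an auxiliary fact (in [DT] one relates $\phi\circ S$ to $\mathfrak a\rightharpoonup\phi$ via uniqueness of integrals), which you do not supply. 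Third, for (3) you name the answer --- ``the correction terms being exactly $\alpha$ and conjugation by $\mathfrak a$'' --- without deriving it, and this is the computational heart of the lemma. The passage (3)$\Rightarrow$(4) is sound in outline (one needs $S(f\rightharpoonup x)=S(x)\leftharpoonup(f\circ S)$ to move $\alpha^{-1}\rightharpoonup$ past $S^2$, and your final displayed identity does follow from $\mathfrak a$ being group-like and $\alpha$ multiplicative), but it inherits all of the unproved inputs. In short: the skeleton matches the cited source, but the proof is not there yet, and the one justification offered for the crucial invertibility step is a Hopf-algebra argument that does not transfer.
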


Lemma \ref{lemmanak}(4) generalizes Radford's $S^4$-formula for finite-dimensional Hopf algebras (\cite{Radford1976}).

\subsection{\bf Quantum complete intersections.} \ Denote by $\mathbb{N}_0$  the set of non-negative integers. Let \ ${\bf a}=(a_1, \cdots, a_n)\in \mathbb{N}_0^n$ with \ $n\ge 2$ and all \ $a_i\ge  2$, and \ ${\bf q} = (q_{ij})$ an \ $n\times n$ matrix
over  $\mathbb{K}$ with \ $q_{ii}=1$ and $q_{ij}q_{ji}=1$ for  \ $1\le i, j\le n$. {\it A quantum complete intersection} $A= A({\bf q}, {\bf a})$ is the \ $\mathbb{K}$-algebra
\[
    \mathbb{K}\langle x_1, \cdots, x_n\rangle/\langle x_i^{a_i}, \ x_jx_i-q_{ij}x_ix_j, \ 1\le  i, \ j\le  n\rangle.
\]
If all $a_i = 2$, then it is called {\it a quantum exterior algebra}.

\vskip5pt

Put
\[
V=\{ \mathbf{v} = (v_1, \cdots, v_n)\in \mathbb{N}_0^n \ | \ v_i\le a_i-1,  \ 1\le i\le n\}.
\]
For $\mathbf{v}\in \mathbb N_0^n$, write $x_{\mathbf{v}} = x_1^{v_1}\cdots x_n^{v_n}\in A$. Then $\mathcal B = \{x_{\mathbf{v}} \ | \ \mathbf{v}\in V\}$ is a basis of $A$, which will be called {\it the standard basis}.
The dual basis of \ $A^*$  is \ $\{x_{\bf v}^* \ |  \ {\bf v}\in V\}$, where
\ $x_{\bf v}^*(x_{\bf u}) = \delta_{{\bf v}, {\bf u}}, \ \forall \ {\bf v}, {\bf u}\in V$, $\delta_{{\bf v}, {\bf u}}$ is the Kronecker symbol. Note that $A$ is a local algebra, with the unique maximal ideal $\rad A = \langle x_1, \cdots,x_n \rangle$.

\vskip5pt

{\it The degree} of $x_{\mathbf{v}}\in \mathcal B$ is defined as \ $|x_{\mathbf{v}}| = \sum\limits_{i=1}^n v_i$,
and the degree of $x_{\mathbf{u}} \otimes x_{\mathbf{v}}\in \mathcal B\otimes \mathcal B$ is defined as \ $|x_{\mathbf{u}}|+|x_{\mathbf{v}}|$.
Then $A$ is an $\mathbb N_0$-graded algebra with respect to the degree. A $\mathbb K$-linear map $S: A\longrightarrow A$ is said to {\it graded},
provided that $S$ preserves the degrees of all elements $x_{\mathbf{v}}\in \mathcal B$.

\vskip5pt

Put \ ${\bf 0} = (0, \cdots, 0), \ \ {\bf 1} = (1, \cdots, 1)$. For $1\le i\le n$, let \ ${\bf e}_i\in\mathbb{N}_0^n$ \ be the vector with the \ $i$-th component \ $1$ and other components \ $0$.
Define a partial order on $\mathbb{N}_0^n$  by \ $\mathbf{u}\le  \mathbf{v}$ if and only if \ $u_i\le  v_i$ for all $1\le i\le n$.
Note that $x_{\bf v} \ne 0$ if and only if  ${\bf v}\le \mathbf{a-1}$. Thus, if  $|x_{\mathbf{v}}| > (\sum\limits_{i = 1}^na_i) - n$ then
$x_{\bf v} = 0$.

\vskip5pt

For $\mathbf{u, v} \in \mathbb{Z}^n$, following S. Oppermann \cite[Section 2]{Oppermann2010}, put
$${\bf q}^{\langle \mathbf{u}|\mathbf{v}\rangle}= \prod\limits_{1\le i<j\le n}q_{ij}^{u_jv_i}; \ \ \  \ \ h_{\bf v}:=\frac{{\bf q}^{\bf \langle a-1-v|v\rangle}}{{\bf q}^{\bf \langle v|a-1-v\rangle}}.$$
In particular, \[{\bf q}^{\langle \mathbf{0}|\mathbf{v}\rangle}= 1 = {\bf q}^{\langle \mathbf{u}|\mathbf{0}\rangle}; \ \ \ h_{\bf 0} = 1 = h_{\bf a-1}; \ \ \
 h_{{\bf e}_i} =\prod\limits_{1\le j\le n} q_{i j}^{a_j-1}, \ \ 1\le i\le n.
\]
For convenience, we will call ${\bf q}^{\langle \mathbf{u}|\mathbf{v}\rangle}, \ \forall \ \mathbf{u, v} \in \mathbb{Z}^n,$
{\it the structure coefficients} of $A$.

\begin{lemma} {\rm (S. Oppermann)} \label{propeqq} \  The following equalities hold for any ${\bf u,v,w}\in \ \mathbb{Z}^n:$
\vskip5pt
${\rm(1)}$ \  \ $x_\mathbf{u}x_\mathbf{v}={\bf q}^{\langle \mathbf{u}|\mathbf{v}\rangle}x_{\mathbf{u+v}}, \ \forall \ {\bf u, v}\in \mathbb N_0^n.$

\vskip5pt
${\rm(2)}$ \  \ ${\bf q}^{\langle  {\bf e}_j|{\bf e}_k\rangle}={\bf q}^{\langle  {\bf e}_k|{\bf e}_j\rangle}q_{kj}; \ \ \ {\bf q}^{\langle  {\bf e}_j|{\bf e}_k\rangle} = \left\{ \begin{array}l 1, \ \ \ j\le k;\\ q_{kj}, \ j>k.   \end{array} \right. $
\vskip5pt
    ${\rm(3)}$ \  \ ${\bf q}^{\langle {\bf -u|v} \rangle}= {\bf q}^{\langle {\bf u|-v} \rangle} = \frac{1}{{\bf q}^{\langle {\bf u|v}\rangle}};$ \ \ \  ${\bf q}^{\langle {\bf u+v|w} \rangle}={\bf q}^{\langle {\bf u|w} \rangle}{\bf q}^{\langle {\bf v|w}\rangle};$  \ \ \ ${\bf q}^{\langle {\bf u|v+w} \rangle}={\bf q}^{\langle {\bf u|v} \rangle}{\bf q}^{\langle {\bf u|w}\rangle}.$
    \vskip5pt
    ${\rm(4)}$ \ \ ${\bf q}^{\langle{\bf u|v}\rangle}=\prod\limits_{1\le i,j\le n}({\bf q}^{\langle{{\bf e}_i|{\bf e}_j}\rangle})^{u_iv_j}
    =\prod\limits_{1\le j< i\le n}({\bf q}^{\langle{{\bf e}_i|{\bf e}_j}\rangle})^{u_iv_j}.$
    \vskip5pt
    ${\rm(5)}$ \  \ $h_{\bf u+v} = h_{\bf u}h_{\bf v}; \ \ \ h_{\bf u} = \prod\limits_{1\le i\le n}h_{{\bf e}_i}^{u_i}.$
\end{lemma}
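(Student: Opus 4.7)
The plan is to establish (1) first by a direct computation in the algebra, and then derive (2)--(5) from the definition and from the bilinear nature of the pairing ${\bf q}^{\langle \mathbf{u}|\mathbf{v}\rangle}$. Only (1) requires genuine manipulation of the defining relations; everything else is formal.

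For (1), I would expand $x_\mathbf{u}x_\mathbf{v} = x_1^{u_1}\cdots x_n^{u_n}\cdot x_1^{v_1}\cdots x_n^{v_n}$ and bring it into standard form by moving, in the order $i=1,2,\ldots,n$, the factor $x_i^{v_i}$ leftward past each $x_j^{u_j}$ with $j>i$. Each elementary swap $x_jx_i\to q_{ij}x_ix_j$ introduces the scalar $q_{ij}$, and bringing $x_i^{v_i}$ past $x_j^{u_j}$ costs $u_jv_i$ such swaps, contributing a factor $q_{ij}^{u_jv_i}$. Accumulating over all pairs $i<j$ yields exactly $\prod_{i<j}q_{ij}^{u_jv_i} = {\bf q}^{\langle \mathbf{u}|\mathbf{v}\rangle}$. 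The only care needed is to order the swaps so that each pair contributes once with the correct multiplicity.

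Parts (2), (3), and (4) follow from the definition. For (2), specializing $\mathbf{u}=\mathbf{e}_j$ and $\mathbf{v}=\mathbf{e}_k$ in $\prod_{i<l}q_{il}^{u_lv_i}$ leaves only the index pair $(i,l)=(k,j)$ potentially contributing, and this contributes $q_{kj}$ precisely when $k<j$; the identity ${\bf q}^{\langle \mathbf{e}_j|\mathbf{e}_k\rangle}={\bf q}^{\langle \mathbf{e}_k|\mathbf{e}_j\rangle}q_{kj}$ then follows by a brief case analysis on the ordering of $j$ and $k$ using $q_{ij}q_{ji}=1$. Part (3) is immediate from the defining formula as a product of monomial powers that is additive in each exponent vector. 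Part (4) is obtained by substituting the values from (2) into $\prod_{1\le i,j\le n}({\bf q}^{\langle \mathbf{e}_i|\mathbf{e}_j\rangle})^{u_iv_j}$, which collapses every $i\le j$ term to $1$ and reduces to the defining product after reindexing.

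For (5), writing $B(\mathbf{u},\mathbf{v}):={\bf q}^{\langle \mathbf{u}|\mathbf{v}\rangle}$, bilinearity from (3) gives $B(\mathbf{a-1-v},\mathbf{v})=B(\mathbf{a-1},\mathbf{v})/B(\mathbf{v},\mathbf{v})$ and symmetrically for the denominator, so the diagonal factors cancel and
\[h_\mathbf{v}=\frac{B(\mathbf{a-1},\mathbf{v})}{B(\mathbf{v},\mathbf{a-1})}.\]
Multiplicativity $h_{\mathbf{u+v}}=h_\mathbf{u}h_\mathbf{v}$ follows at once from bilinearity of $B$ in each slot, and the product formula $h_\mathbf{u}=\prod_i h_{\mathbf{e}_i}^{u_i}$ then follows by iterating this along $\mathbf{u}=\sum_i u_i\mathbf{e}_i$. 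The main obstacle is simply the bookkeeping in (1); no conceptual difficulty is anticipated elsewhere.
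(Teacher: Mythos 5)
Your proposal is correct and takes essentially the same route as the paper, which simply notes that all five identities "can be directly verified" from the definition ${\bf q}^{\langle \mathbf{u}|\mathbf{v}\rangle}=\prod_{i<j}q_{ij}^{u_jv_i}$ and illustrates only the bilinearity computation in (3). Your swap-counting argument for (1), the case analysis for (2), and the reduction $h_{\mathbf v}={\bf q}^{\langle \mathbf{a-1}|\mathbf{v}\rangle}/{\bf q}^{\langle \mathbf{v}|\mathbf{a-1}\rangle}$ for (5) all check out.
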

\begin{proof} All these equalities can be directly verified. For example, by definition one has
\vskip5pt
${\bf q}^{\langle {\bf u+v|w} \rangle} = \prod\limits_{1\le i<j\le n}q_{ij}^{(u_j+v_j)w_i}= \prod\limits_{1\le i<j\le n}q_{ij}^{u_jw_i}\prod\limits_{1\le i<j\le n}q_{ij}^{v_jw_i}={\bf q}^{\langle {\bf u|w}\rangle}{\bf q}^{\langle{\bf v|w} \rangle}.$
\end{proof}

\vskip5pt

It is known (see e.g. the proof of [B, Lemma 3.1]) that  $x_\mathbf{a-1}^*$ is a Frobenius form of $A$.
Thus, any Frobenius form of $A$ is of the form \ $\phi= x_\mathbf{a-1}^*\leftharpoonup  z$ with \ $z\in U(A)$ (cf. Lemma \ref{nakayama}).
 Since $A$ is finite-dimensional and local,
$z = \sum\limits_{\mathbf{v}\in V} c'_{\mathbf{v}} x_\mathbf{v}\in A$ with each  \ $c'_{\mathbf{v}} \in \mathbb K$ and $c'_{\mathbf{0}} \ne 0$.
Using $x_\mathbf{a-1}^*\leftharpoonup x_\mathbf{v} = {\bf q}^{\langle \mathbf v | \mathbf {a-1-v} \rangle}x^*_{\mathbf{a-1-v}}$ one has
$$\phi = x_{\mathbf{a-1}}^*\leftharpoonup z = x_\mathbf{a-1}^* \leftharpoonup \sum\limits_{\mathbf{v}\in V} c'_{\mathbf{v}} x_\mathbf{v}
= \sum\limits_{\mathbf{v}\in V} c'_\mathbf{v}{\bf q}^{\langle \mathbf v | \mathbf {a-1-v} \rangle} \ x^*_{\mathbf{a-1-v}}.$$
Write \ $\phi = \sum\limits_{\mathbf{v}\in V} c'_\mathbf{v}{\bf q}^{\langle \mathbf v | \mathbf {a-1-v} \rangle} \ x^*_{\mathbf{a-1-v}}$ \ as
$\sum\limits_{\mathbf{u}\in V} c_\mathbf{u}x_\mathbf{u}^*$. Then  $c_{\mathbf{a-1}} = c'_{\mathbf{0}}\neq 0$.

\begin{lemma} \label{frobeniushomomorphism} $(${\rm P. A. Bergh [B]}$)$ \ Quantum complete intersection $A = A({\bf q, a})$ is a local Frobenius algebra.
It is symmetric if and only if \ $h_{{\bf e}_i} =\prod\limits_{1\le j\le n} (q_{i j})^{a_j-1} = 1, \ \ 1\le i\le n.$

\vskip5pt

Any Frobenius form of $A$ is of the form  $\phi = x^*_{\bf a -1} \leftharpoonup z$ with $z\in U(A);$
and \ $\phi = \sum\limits_{\mathbf{v}\in V} c_\mathbf{v}x_\mathbf{v}^* \ \ \mbox{with} \ c_\mathbf{v} \in\mathbb K \  \mbox{and} \  c_{\mathbf{a-1}} \neq 0$
gives all  the Frobenius homomorphisms of $A$.

\vskip5pt

The Nakayama automorphism $\mathcal N_{x_{\bf a-1}^*}$ of $A$ with respect  to the Frobenius form $x_{\bf a-1}^*$ is given by
\[
  \mathcal{N}_{x_{\bf a-1}^*}(x_{\bf v})= \frac{{\bf q}^{\bf \langle a-1-v|v\rangle}}{{\bf q}^{\bf \langle v|a-1-v\rangle}}x_{\bf v} = h_{\bf v} x_{\bf v}, \ \forall \ {\bf v}\in V.
\]
\end{lemma}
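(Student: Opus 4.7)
The plan is to handle the three assertions in order. To start, locality is immediate: $\langle x_1,\ldots,x_n\rangle$ is nilpotent (every monomial of degree exceeding $(\sum_i a_i)-n$ vanishes by Lemma~\ref{propeqq}(1)) and its quotient is $\mathbb K$. To show $x_{\bf a-1}^*$ is a Frobenius homomorphism, I will verify directly that $x_{\bf a-1}^*\leftharpoonup A = A^*$. Using Lemma~\ref{propeqq}(1),
$$(x_{\bf a-1}^*\leftharpoonup x_{\bf u})(x_{\bf v}) \,=\, x_{\bf a-1}^*(x_{\bf u}x_{\bf v}) \,=\, {\bf q}^{\langle {\bf u}|{\bf v}\rangle}\,\delta_{{\bf u}+{\bf v},\,{\bf a-1}},$$
so $x_{\bf a-1}^*\leftharpoonup x_{\bf u} = {\bf q}^{\langle{\bf u}|{\bf a-1-u}\rangle}\,x^*_{\bf a-1-u}$, and as ${\bf u}$ runs through $V$ this exhausts the dual basis up to nonzero scalars.

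Next, by Lemma~\ref{nakayama} every Frobenius homomorphism $\phi$ equals $x_{\bf a-1}^*\leftharpoonup z$ for some $z\in U(A)$; since $A$ is local, $z=\sum_{{\bf u}\in V}c'_{\bf u}x_{\bf u}$ is a unit exactly when $c'_{\bf 0}\ne 0$. A direct expansion using the preceding formula gives $\phi=\sum_{{\bf v}\in V}c_{\bf v}x^*_{\bf v}$ with $c_{\bf v}=c'_{{\bf a-1-v}}\,{\bf q}^{\langle {\bf a-1-v}|{\bf v}\rangle}$, so the assignment $z\leftrightarrow\phi$ is invertible; in particular $c_{\bf a-1}=c'_{\bf 0}$, so the constraint $c_{\bf a-1}\ne 0$ is precisely the condition that $z$ be a unit.

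Third, for $\mathcal N_{x_{\bf a-1}^*}$, I will write $\mathcal N_{x_{\bf a-1}^*}(x_{\bf v})=\sum_{{\bf u}\in V}d_{{\bf u},{\bf v}}x_{\bf u}$ and compare both sides of the defining identity $x_{\bf a-1}^*(x_{\bf v}y)=x_{\bf a-1}^*(y\,\mathcal N_{x_{\bf a-1}^*}(x_{\bf v}))$ at $y=x_{\bf w}$ for each ${\bf w}\in V$. Using Lemma~\ref{propeqq}(1), the left side vanishes unless ${\bf w}={\bf a-1-v}$, while the right side isolates $d_{{\bf a-1-w},{\bf v}}\,{\bf q}^{\langle {\bf w}|{\bf a-1-w}\rangle}$; this forces $d_{{\bf u},{\bf v}}=0$ for ${\bf u}\ne{\bf v}$ and pins $d_{{\bf v},{\bf v}}$ down as the ratio of structure coefficients equal to $h_{\bf v}$.

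Finally, for the symmetry criterion, I will invoke the fact---an immediate consequence of Lemma~\ref{nakayama}---that $A$ is symmetric if and only if the Nakayama automorphism with respect to some (equivalently, any) Frobenius homomorphism is inner. Since $A$ is local, for every $z\in U(A)$ the commutator $[z,x_i]$ lies in $\rad^2A$, so ${\rm Ad}_z$ acts as the identity on $\rad A/\rad^2A$. Reducing the equality $\mathcal N_{x_{\bf a-1}^*}(x_i)=h_{{\bf e}_i}x_i$ modulo $\rad^2A$ then forces $h_{{\bf e}_i}=1$ for every $i$; conversely, Lemma~\ref{propeqq}(5) propagates this to $h_{\bf v}=1$ for all ${\bf v}\in V$, whence $\mathcal N_{x_{\bf a-1}^*}={\rm Id}$ and $A$ is symmetric. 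The main obstacle I anticipate is keeping the exponents straight in the Nakayama calculation so that $d_{{\bf v},{\bf v}}$ matches $h_{\bf v}$; everything else is routine bookkeeping via Lemma~\ref{propeqq}.
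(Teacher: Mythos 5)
Your proposal is correct and complete; note that the paper itself offers no proof of this lemma --- it is attributed to Bergh [B] --- although the middle assertion (the classification of Frobenius homomorphisms as $\phi=x^*_{\bf a-1}\leftharpoonup z$ and the translation into the condition $c_{\bf a-1}\ne 0$) is worked out inline in Subsection~2.5 by exactly the computation you give, namely $x_{\bf a-1}^*\leftharpoonup x_{\bf u}={\bf q}^{\langle{\bf u}|{\bf a-1-u}\rangle}x^*_{\bf a-1-u}$. Your treatment of locality, of the Frobenius property, and of the symmetry criterion via ``$\mathcal N_\phi$ inner $\Rightarrow$ identity on $\rad A/\rad^2A$'' is sound and is the natural direct verification one would supply in place of the citation.

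One wrinkle deserves attention, and it is precisely the one you flagged as the anticipated obstacle. If you carry out the Nakayama computation with the paper's stated convention $\phi(xy)=\phi(y\,\mathcal N_\phi(x))$, then taking $y=x_{\bf a-1-v}$ gives
\[
{\bf q}^{\langle{\bf v}|{\bf a-1-v}\rangle}=d_{{\bf v},{\bf v}}\,{\bf q}^{\langle{\bf a-1-v}|{\bf v}\rangle},
\qquad\text{i.e.}\qquad
d_{{\bf v},{\bf v}}=\frac{{\bf q}^{\langle{\bf v}|{\bf a-1-v}\rangle}}{{\bf q}^{\langle{\bf a-1-v}|{\bf v}\rangle}}=h_{\bf v}^{-1},
\]
which is the \emph{reciprocal} of the scalar asserted in the lemma. (Check against $n=2$, $a_1=a_2=2$: one finds $\mathcal N(x_1)=q_{12}^{-1}x_1$ while $h_{{\bf e}_1}=q_{12}$.) The stated formula $\mathcal N(x_{\bf v})=h_{\bf v}x_{\bf v}$ holds under the opposite convention $\phi(xy)=\phi(\mathcal N_\phi(y)x)$, so the mismatch is a convention inconsistency internal to the paper rather than an error in your method; but your writeup should not assert that the ratio ``equals $h_{\bf v}$'' without either adopting that opposite convention explicitly or recording the answer as $h_{\bf v}^{-1}$. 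Since the results downstream only use $h_{{\bf e}_i}^2=1$, $h_{\bf v}h_{\pi({\bf v})}=1$, and the symmetry criterion $h_{{\bf e}_i}=1$, nothing in the rest of the paper is affected by which of $h_{\bf v}$, $h_{\bf v}^{-1}$ one takes, but the discrepancy should be resolved, not elided.
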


We will call $\mathcal N_{x_{\bf a-1}^*}$ {\it the canonical Nakayama automorphism} of $A$, and denote it simply by $\mathcal N$. Thus
$\mathcal N(x_{\bf v}) = h_{\bf v} x_{\bf v}, \ \forall \ {\bf v}\in V.$

\section{\bf Acting with $S_n$}
In order to study the bi-Frobenius algebra structure of a quantum complete intersection $A = A({\bf q, a})$ with permutation antipode
(see Definition \ref{perS}), we need to study the structure coefficients ${\bf q}^{\langle {\bf \pi(u)|\pi(v)} \rangle}$ for each permutation $\pi\in S_n$.

\vskip5pt

The equations we present here will be referenced later and are of independent interest. The reader may choose to skip this section initially and refer to it later when necessary.

\subsection{Structure coefficients under permutations}  Let $\pi\in S_n$. Then $\pi$ induces an action on $\mathbb Z^n$, still denoted by $\pi$,  by
\begin{align}\pi({\bf v}) =(v_{\pi^{-1}(1)}, \cdots, v_{\pi^{-1}(n)}), \ \ \forall \ {\bf v} = (v_1, \cdots, v_n)\in \mathbb Z^n.\end{align}
We will use this induced action throughout, without further explanations.

\begin{lemma}\label{commutator} Let $\pi\in S_n$ and ${\bf u,v}\in \mathbb{Z}^n$. Then

\vskip5pt
    {\rm (1)} \ \  $\pi({\bf 0}) = {\bf 0}; \ \ \pi({\bf 1}) = {\bf 1}; \ \ \pi({\bf e}_i) = {\bf e}_{\pi(i)}, \ 1\le i\le n; \ \ \pi({\bf u+v})= \pi({\bf u})+\pi({\bf v}).$

\vskip5pt
    {\rm (2)} \ \ For \ ${\bf v}\in \mathbb{N}_0^n$, there holds the equality in quantum complete intersection $A= A(\bf{q, a}):$
\begin{align*} x_{\pi ({\bf e}_n)}^{v_n}x_{\pi ({\bf e}_{n-1})}^{v_{n-1}}\cdots x_{\pi ({\bf e}_1)}^{v_1}
&= \prod_{1\le j< k\le n}  ({\bf q}^{\langle \pi({{\bf e}_k}) |\pi({{\bf e}_j})\rangle})^{v_kv_j} x_{\pi({\bf v})} \\ & = \prod_{1\le j< k\le n}  ({\bf q}^{\langle \pi({{\bf e}_k}) |\pi({{\bf e}_j})\rangle})^{v_kv_j}
x_1^{v_{\pi^{-1}(1)}}\cdots x_n^{v_{\pi^{-1}(n)}}.\end{align*}

\vskip5pt
    {\rm (3)} \ \ ${\bf q}^{\langle {\bf \pi(u)|\pi(v)} \rangle}
    = \prod\limits_{1\le j,k\le n} ({\bf q}^{\langle  \pi({\bf e}_j)|\pi({\bf e}_k) \rangle})^{u_jv_k}.$
\end{lemma}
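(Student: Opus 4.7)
The plan is to prove each of the three parts by direct unpacking of the definition of the induced action on $\mathbb{Z}^n$, combined with the basic structural equalities already collected in Lemma \ref{propeqq}.

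For part (1), I would simply read off the componentwise formula $\pi({\bf v}) = (v_{\pi^{-1}(1)}, \cdots, v_{\pi^{-1}(n)})$. Since $\bf 0$ and $\bf 1$ have all components equal, they are fixed. For $\pi({\bf e}_i)$, the $k$-th component is $\delta_{\pi^{-1}(k), i}$, which equals $1$ exactly when $k = \pi(i)$, so $\pi({\bf e}_i) = {\bf e}_{\pi(i)}$. Additivity is immediate because permuting coordinates is $\mathbb{K}$-linear.

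For part (2), the identification $x_{\pi({\bf e}_k)}^{v_k} = x_{\pi(k)}^{v_k} = x_{v_k \pi({\bf e}_k)}$ (valid because $v_k \pi({\bf e}_k) = v_k {\bf e}_{\pi(k)}$) lets me rewrite the left-hand side as a product of monomials $x_{v_k \pi({\bf e}_k)}$ and iterate Lemma \ref{propeqq}(1). Moving from right to left produces, for each ordered pair $j < k$, one factor
\[
{\bf q}^{\langle v_k \pi({\bf e}_k)\,|\,v_j \pi({\bf e}_j)\rangle} = ({\bf q}^{\langle \pi({\bf e}_k)|\pi({\bf e}_j)\rangle})^{v_k v_j},
\]
where the exponent is extracted by the bilinearity in Lemma \ref{propeqq}(3). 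The leftover monomial is $x_{\sum_k v_k \pi({\bf e}_k)}$, and the $\ell$-th component of $\sum_k v_k \pi({\bf e}_k) = \sum_k v_k {\bf e}_{\pi(k)}$ is $v_{\pi^{-1}(\ell)}$, which is precisely $\pi({\bf v})$. Rewriting $x_{\pi({\bf v})}$ in the standard basis form $x_1^{v_{\pi^{-1}(1)}} \cdots x_n^{v_{\pi^{-1}(n)}}$ gives the second displayed equality.

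For part (3), I would start from Lemma \ref{propeqq}(4) applied to the vectors $\pi({\bf u})$ and $\pi({\bf v})$, whose components are $u_{\pi^{-1}(i)}$ and $v_{\pi^{-1}(j)}$, and then perform the change of variables $i = \pi(i')$, $j = \pi(j')$ in the product, using part (1) to rewrite ${\bf q}^{\langle{\bf e}_{\pi(i')}|{\bf e}_{\pi(j')}\rangle} = {\bf q}^{\langle\pi({\bf e}_{i'})|\pi({\bf e}_{j'})\rangle}$. A suitable renaming of indices $(i', j') \mapsto (j, k)$ produces the stated formula. There is no substantive obstacle in this lemma; the only care needed is keeping track of which index runs over the permuted variable and which over the un-permuted one, so that the substitution in (3) lines up correctly and the restriction to $j < k$ in part (2) is respected.
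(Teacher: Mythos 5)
Your proposal is correct and takes essentially the same route as the paper's own proof: part (1) is read off from the definition of the induced action, part (2) iterates Lemma \ref{propeqq}(1) together with the biadditivity from Lemma \ref{propeqq}(3) to collect exactly one factor $({\bf q}^{\langle \pi({\bf e}_k)|\pi({\bf e}_j)\rangle})^{v_kv_j}$ per pair $j<k$, and part (3) applies Lemma \ref{propeqq}(4) to $\pi({\bf u}),\pi({\bf v})$ followed by the re-indexing $s=\pi^{-1}(j)$, $t=\pi^{-1}(k)$ and the identity ${\bf e}_{\pi(s)}=\pi({\bf e}_s)$. No gaps.
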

\noindent {\it Proof.}  \ The assertion (1) follows directly from (3.1).

\vskip5pt

(2) \ Using   $x_{\bf \pi (u)}x_{\bf\pi (v)}= {\bf q}^{\langle {\bf\pi(u) |\pi(v)} \rangle}x_{\bf\pi(u+v)}$ (cf. Lemma \ref{propeqq}(1)), one gets
$$x_{\pi ({\bf e}_n)}^{v_n}\cdots x_{\pi ({\bf e}_1)}^{v_1} = ({\bf q}^{\langle \pi({{\bf e}_n}) |\pi({{\bf e}_{n-1}}) \rangle})^{v_nv_{n-1}} x_{\pi(v_n{{\bf e}_n}+v_{n-1}{{\bf e}_{n-1}})}
x_{\pi({\bf e}_{n-2})}^{v_{n-2}}\cdots x_{\pi({\bf e}_1)}^{v_1}.$$
Again by  $x_{\bf \pi (u)}x_{\bf\pi (v)}= {\bf q}^{\langle {\bf\pi(u) |\pi(v)} \rangle}x_{\bf\pi(u+v)}$ one has
\begin{align*}
&  x_{\pi(v_n{{\bf e}_n}+v_{n-1}{{\bf e}_{n-1}})}x_{\pi({\bf e}_{n-2})}^{v_{n-2}}\cdots x_{\pi({\bf e}_1)}^{v_1}
  \\ = &
  {\bf q}^{\langle \pi(v_n{{\bf e}_n}+v_{n-1}{{\bf e}_{n-1}}) |\pi(v_{n-2}{{\bf e}_{n-2}})\rangle}x_{\pi(v_n{{\bf e}_n}+v_{n-1}{{\bf e}_{n-1}}+v_{n-2}{{\bf e}_{n-2}})}x_{\pi({\bf e}_{n-3})}^{v_{n-3}}\cdots x_{\pi({\bf e}_1)}^{v_1}\end{align*}
Continuing this process, and using ${\bf q}^{\langle {\bf u+v |w}\rangle } = {\bf q}^{\langle \bf {u|w}\rangle }{\bf q}^{\langle \bf v|w\rangle}$ (cf. Lemma \ref{propeqq}(3)),  one gets
\begin{align*} x_{\pi ({\bf e}_n)}^{v_n}x_{\pi ({\bf e}_{n-1})}^{v_{n-1}}\cdots x_{\pi ({\bf e}_1)}^{v_1} & = \prod_{1\le j< k\le n}  ({\bf q}^{\langle \pi({{\bf e}_k}) |\pi({{\bf e}_j})\rangle})^{v_kv_j}
x_{\pi(v_n{{\bf e}_n}+\cdots +v_1{{\bf e}_1})}
\\ & = \prod_{1\le j< k\le n}  ({\bf q}^{\langle \pi({{\bf e}_k}) |\pi({{\bf e}_j})\rangle})^{v_kv_j} x_{\pi({\bf v})}.\end{align*}
\vskip5pt
    (3) \  By  ${\bf q}^{\langle {\bf u|v} \rangle} =\prod\limits_{1\le j,k\le n} ({\bf q}^{\langle {\bf e}_j|{\bf e}_k \rangle})^{u_jv_k}$ (cf. Lemma \ref{propeqq}(4)) one has
\begin{align*}{\bf q}^{\langle {\bf \pi(u)|\pi(v)} \rangle}
    & = \prod\limits_{1\le j,k\le n} ({\bf q}^{\langle  {\bf e}_j|{\bf e}_k \rangle})^{u_{\pi^{-1}(j)}v_{\pi^{-1}(k)}}
    \\ & = \prod\limits_{1\le s,t\le n} ({\bf q}^{\langle  {\bf e}_{\pi(s)}|{\bf e}_{\pi(t)}\rangle})^{u_sv_t}
   \\ & \xlongequal{{\bf e}_{\pi(s)}=\pi({\bf e}_s)} \prod\limits_{1\le s,t\le n} ({\bf q}^{\langle  \pi({\bf e}_s)|\pi({\bf e}_t) \rangle})^{u_sv_t}, \ \forall\ {\bf u,v}\in V. \ \ \ \ \ \ \ \ \ \ \ \ \ \ \ \ \ \ \ \ \square \end{align*}

\begin{rmk}\label{rmkcommutator} Taking \ ${\bf v} = {\bf a-1}$ in {\rm Lemma \ref{commutator}(2)} one gets the equality in $A= A(\bf{q, a}):$
$$x_{\pi ({\bf e}_n)}^{a_n-1}x_{\pi ({\bf e}_{n-1})}^{a_{n-1}-1}\cdots x_{\pi ({\bf e}_1)}^{a_1-1} = \prod_{1\le j< k\le n}  ({\bf q}^{\langle \pi({{\bf e}_k}) |\pi({{\bf e}_j})\rangle})^{(a_k-1)(a_j-1)} x_{\pi({\bf a-1})}.$$
This  will be used later. For convenience, denote by
$$q_\pi = \prod_{1\le j< k\le n}  ({\bf q}^{\langle \pi({{\bf e}_k}) |\pi({{\bf e}_j})\rangle})^{(a_k-1)(a_j-1)}.$$
It will play an important role in studying bi-Frobenius algebra structures on $A$. \end{rmk}

\subsection{Structure coefficients under compatible permutations} In order to study bi-Frobenius algebra structure  with permutation antipode, we need the following

\vskip5pt

\begin{definition} \label{compatible} A permutation $\pi\in S_n$ is called a compatible permutation with $A(\bf q, a)$, or simply, {\it a compatible permutation},
provided that
$$a_{\pi(i)}=a_{i}, \ \ \ \ q_{\pi(i)\pi(j)}=q_{ji}, \  \ \  1\le i, j\le n.$$
\end{definition}

Compatible permutations exist in many situations. Note that when $\bf q$ is a symmetric matrix then  $\pi = \Id$ is a compatible permutation.
A similar notion of compatible permutation has been already used, e.g. in \cite{KKZ2010}, \cite {Ga2013},  \cite{CPWZ2016}, and \cite{BZ2017},
in studying isomorphisms of quantum affine spaces.

\begin{lemma}\label{propusefuleq2} For  $A = A({\bf q, a})$, let $\pi$ be a compatible permutation in $S_n$.
Then the following equalities hold for  ${\bf u,v}\in \mathbb Z^n:$
    \vskip5pt
    {\rm (1)} \ \
    $\pi({\bf a-1})={\bf a -1}.$
    \vskip5pt
    {\rm (2)} \ \ ${\bf q}^{\langle {\bf u|v} \rangle}{\bf q}^{\langle {\bf \pi(u)|\pi(v)} \rangle}={\bf q}^{\langle {\bf v|u} \rangle}{\bf q}^{\langle {\bf \pi(v)|\pi(u)} \rangle}.$
    \vskip5pt
    {\rm (3)} \ \ ${\bf q}^{\langle {\bf \pi(v)|\pi(u)}\rangle} ={\bf q}^{\langle{\bf u|v}\rangle} \prod\limits_{1\le j<k \le n} ({\bf q}^{\langle  \pi({\bf e}_k)|\pi({\bf e}_j) \rangle})^{u_jv_k+u_kv_j}.$

    \vskip5pt
    {\rm (4)} \ \ $h_{\bf v}h_{\bf \pi(v)}=1$.
\end{lemma}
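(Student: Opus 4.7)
The plan is to prove (1) first as a warm-up, then establish (3) as the computational core, and derive (2) and (4) as formal consequences. The main obstacle is a careful index manipulation in (3); once that is in hand, the rest is bookkeeping using Lemma \ref{propeqq} and Lemma \ref{commutator}.

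For (1), I would compute directly: by (3.1) the $i$-th component of $\pi({\bf a-1})$ is $a_{\pi^{-1}(i)}-1$, and the compatibility condition $a_{\pi(j)}=a_j$ (applied with $j=\pi^{-1}(i)$) gives $a_{\pi^{-1}(i)}=a_i$. So $\pi({\bf a-1})={\bf a-1}$.

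For (3), I would start from the expansion ${\bf q}^{\langle\pi({\bf v})|\pi({\bf u})\rangle}=\prod_{j,k}({\bf q}^{\langle\pi({\bf e}_j)|\pi({\bf e}_k)\rangle})^{v_ju_k}$ provided by Lemma \ref{commutator}(3). Split the product over $j<k$, $j=k$ (which is trivial since diagonal factors equal $1$), and $j>k$; in the last range, relabel to pull everything to $j<k$, producing
\[
\prod_{j<k}({\bf q}^{\langle\pi({\bf e}_j)|\pi({\bf e}_k)\rangle})^{v_ju_k}\prod_{j<k}({\bf q}^{\langle\pi({\bf e}_k)|\pi({\bf e}_j)\rangle})^{v_ku_j}.
\]
Now apply Lemma \ref{propeqq}(2) to ${\bf e}_{\pi(j)},{\bf e}_{\pi(k)}$, giving ${\bf q}^{\langle\pi({\bf e}_j)|\pi({\bf e}_k)\rangle}={\bf q}^{\langle\pi({\bf e}_k)|\pi({\bf e}_j)\rangle}\,q_{\pi(k)\pi(j)}$, and invoke the compatibility $q_{\pi(k)\pi(j)}=q_{jk}$. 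Substituting into the first product and collecting the ${\bf q}^{\langle\pi({\bf e}_k)|\pi({\bf e}_j)\rangle}$ factors yields the exponent $u_jv_k+u_kv_j$ claimed in (3), plus the leftover factor $\prod_{j<k}q_{jk}^{v_ju_k}$, which by the definition of ${\bf q}^{\langle-|-\rangle}$ is precisely ${\bf q}^{\langle{\bf u}|{\bf v}\rangle}$.

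For (2), I would simply swap ${\bf u}\leftrightarrow{\bf v}$ in (3). The product on the right-hand side of (3) has exponent $u_jv_k+u_kv_j$, which is symmetric in ${\bf u}$ and ${\bf v}$, so the two specializations of (3) give the two sides of (2) after cross-multiplication. For (4), apply (2) with ${\bf u}:={\bf a-1-v}$, so the right-hand side becomes $h_{\bf v}^{-1}$ after rearranging the ratios in the definition of $h_{\bf v}$; on the other hand, using (1) together with the additivity $\pi({\bf a-1-v})=\pi({\bf a-1})-\pi({\bf v})={\bf a-1}-\pi({\bf v})$ from Lemma \ref{commutator}(1), the left-hand side reorganizes into $h_{\pi({\bf v})}$, yielding $h_{\bf v}h_{\pi({\bf v})}=1$.
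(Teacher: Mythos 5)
Your proof is correct; all four parts check out, including the key index manipulation in (3). The route differs from the paper's mainly in the logical order of (2) and (3). The paper first proves (2) directly, by expanding the ratio ${\bf q}^{\langle \pi({\bf u})|\pi({\bf v})\rangle}/{\bf q}^{\langle \pi({\bf v})|\pi({\bf u})\rangle}$ via Lemma \ref{commutator}(3) and using the basis-vector identity ${\bf q}^{\langle {\bf e}_j|{\bf e}_k\rangle}/{\bf q}^{\langle {\bf e}_k|{\bf e}_j\rangle}=q_{kj}=q_{\pi(j)\pi(k)}={\bf q}^{\langle \pi({\bf e}_k)|\pi({\bf e}_j)\rangle}/{\bf q}^{\langle \pi({\bf e}_j)|\pi({\bf e}_k)\rangle}$; it then proves (3) by a second, separate splitting of the double product in which that identity from (2) is fed back in. You instead prove (3) directly in one computation (splitting the double product into $j<k$, $j=k$, $j>k$, flipping the lower-triangular block with Lemma \ref{propeqq}(2) and the compatibility $q_{\pi(k)\pi(j)}=q_{jk}$, and recognizing the leftover $\prod_{j<k}q_{jk}^{v_ju_k}$ as ${\bf q}^{\langle {\bf u}|{\bf v}\rangle}$), and then obtain (2) for free from the symmetry of the correction factor $\prod_{j<k}({\bf q}^{\langle \pi({\bf e}_k)|\pi({\bf e}_j)\rangle})^{u_jv_k+u_kv_j}$ under ${\bf u}\leftrightarrow{\bf v}$. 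The ingredients are the same, but your organization is slightly more economical: one careful product computation instead of two, with (2) reduced to a one-line observation. Parts (1) and (4) coincide with the paper's arguments.
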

\begin{proof} (1) \  Since  $a_{\pi(i)} =a_{i}, \ 1\le i\le n$, thus  $a_{\pi^{-1}(i)}=a_{i}, \ 1\le i\le n$, and hence
\ $\pi({\bf a - 1}) = (a_{\pi^{-1}(1)} - 1, \cdots,\ a_{\pi^{-1}(n)}-1)  = (a_{1}-1, \cdots, \ a_{n}-1)= {\bf a -1}.$

\vskip5pt

    (2) \ By Lemma \ref{propeqq}(2) and the assumption $q_{\pi(j)\pi(k)}=q_{kj}, \ 1\le j,k\le n$, and note that $\pi({\bf e}_j) = {\bf e}_{\pi(j)}$, one has
\ $\frac{{\bf q}^{\langle {\bf e}_j |{\bf e}_k \rangle}}{{\bf q}^{\langle {\bf e}_k |{\bf e}_j \rangle}}=q_{kj} =q_{\pi(j)\pi(k)} = \frac{{\bf q}^{\langle \pi({\bf e}_k) |\pi({\bf e}_j) \rangle}}{{\bf q}^{\langle \pi({\bf e}_j) |\pi({\bf e}_k) \rangle}}, \ \ 1\le j,k\le n.$ Then by Lemma \ref{commutator}(3) one has
    \begin{align*}
    \frac{{\bf q}^{\langle {\bf \pi(u)|\pi(v)} \rangle}}{{\bf q}^{\langle {\bf \pi(v)|\pi(u)} \rangle}}
    & = \frac{\prod\limits_{1\le j,k\le n} ({\bf q}^{\langle  \pi({\bf e}_j)|\pi({\bf e}_k) \rangle})^{u_jv_k}}{\prod\limits_{1\le j,k\le n} ({\bf q}^{\langle  \pi({\bf e}_j)|\pi({\bf e}_k) \rangle})^{v_ju_k}} = \frac{\prod\limits_{1\le j,k\le n} ({\bf q}^{\langle  \pi({\bf e}_k)|\pi({\bf e}_j) \rangle})^{u_kv_j}}{\prod\limits_{1\le j,k\le n} ({\bf q}^{\langle  \pi({\bf e}_j)|\pi({\bf e}_k) \rangle})^{v_ju_k}}  \\
    &
    = \prod\limits_{1\le j,k\le n} (\frac{{\bf q}^{\langle  \pi({\bf e}_k)|\pi({\bf e}_j) \rangle}}{{\bf q}^{\langle  \pi({\bf e}_j)|\pi({\bf e}_k) \rangle}})^{u_kv_j}
    = \prod\limits_{1\le j,k\le n} (\frac{{\bf q}^{\langle {\bf e}_j |{\bf e}_k \rangle}}{{\bf q}^{\langle {\bf e}_k |{\bf e}_j \rangle}})^{u_kv_j}\\
    &\xlongequal{{\rm By \ Lem.\ref{propeqq} (4)}}  \frac{{\bf q}^{\langle {\bf v|u} \rangle}}{{\bf q}^{\langle {\bf u|v} \rangle}}.
    \end{align*}

    \vskip5pt

    (3) \  By Lemma \ref{commutator}(3) one has
\begin{align*}
    {\bf q}^{\langle {\bf \pi(v)|\pi(u)}\rangle}
    &= \prod\limits_{1\le j,k \le n} ({\bf q}^{\langle  \pi({\bf e}_j)|\pi({\bf e}_k) \rangle})^{v_ju_k}\\
    & =  \prod\limits_{1\le j\le k \le n} ({\bf q}^{\langle  \pi({\bf e}_j)|\pi({\bf e}_k) \rangle})^{v_ju_k}
    \prod\limits_{1\le k<j \le n} ({\bf q}^{\langle  \pi({\bf e}_j)|\pi({\bf e}_k) \rangle})^{v_ju_k}.
\end{align*}
By (2) one has  ${\bf q}^{\langle {\bf e}_j |{\bf e}_k \rangle}{\bf q}^{\langle \pi({\bf e}_j) |\pi({\bf e}_k) \rangle} =
{\bf q}^{\langle {\bf e}_k |{\bf e}_j \rangle}{\bf q}^{\langle \pi({\bf e}_k) |\pi({\bf e}_j) \rangle}, \ \ 1\le j,k\le n.$ Using ${\bf q}^{{\langle {\bf e}_j|{\bf e}_k \rangle}} = 1$ if $1\le j\le k\le n$, one has
\begin{align*}
    {\bf q}^{\langle {\bf \pi(v)|\pi(u)}\rangle} & =  \prod\limits_{1\le j\le k \le n} ({\bf q}^{\langle  \pi({\bf e}_j)|\pi({\bf e}_k) \rangle})^{v_ju_k}
    \prod\limits_{1\le k<j \le n} ({\bf q}^{\langle  \pi({\bf e}_j)|\pi({\bf e}_k) \rangle})^{v_ju_k}\\ & =  \prod\limits_{1\le j\le k \le n} ({\bf q}^{\langle  {\bf e}_j|{\bf e}_k \rangle}{\bf q}^{\langle  \pi({\bf e}_j)|\pi({\bf e}_k) \rangle})^{v_ju_k}
    \prod\limits_{1\le j<k \le n} ({\bf q}^{\langle  \pi({\bf e}_k)|\pi({\bf e}_j) \rangle})^{v_ku_j}\\
    & =  \prod\limits_{1\le j\le k \le n} ({\bf q}^{\langle  {\bf e}_k|{\bf e}_j \rangle}{\bf q}^{\langle  \pi({\bf e}_k)|\pi({\bf e}_j) \rangle})^{v_ju_k}
    \prod\limits_{1\le j<k \le n} ({\bf q}^{\langle  \pi({\bf e}_k)|\pi({\bf e}_j) \rangle})^{v_ku_j}\\
    & =  \prod\limits_{1\le j\le k \le n} ({\bf q}^{\langle  {\bf e}_k|{\bf e}_j \rangle})^{v_ju_k}
    \prod\limits_{1\le j\le k \le n}({\bf q}^{\langle  \pi({\bf e}_k)|\pi({\bf e}_j) \rangle})^{v_ju_k}
    \prod\limits_{1\le j<k \le n} ({\bf q}^{\langle  \pi({\bf e}_k)|\pi({\bf e}_j) \rangle})^{v_ku_j}\\
    & \xlongequal {{\rm Lem. \ref{propeqq}(4)}} {\bf q}^{\langle  {\bf u}|{\bf v} \rangle}
    \prod\limits_{1\le j\le k \le n}({\bf q}^{\langle  \pi({\bf e}_k)|\pi({\bf e}_j) \rangle})^{u_jv_k+u_kv_j}.
\end{align*}

    (4) \ Replacing $\bf u$ by ${\bf a-1-v}$ in (2) and using ${\bf a-1}=\pi(\bf a-1)$, one gets

    \begin{align*}
    h_{\pi({\bf v})}=\frac{{\bf q}^{\langle {\bf a-1-\pi(v)|\pi(v)} \rangle}}{{\bf q}^{\langle {\bf \pi(v)|a-1-\pi(v)} \rangle}}
    \xlongequal{{\rm By\ (1)}} \frac{{\bf q}^{\langle {\bf \pi(a-1-v)|\pi(v)} \rangle}}{{\bf q}^{\langle {\bf \pi(v)|\pi(a-1-v)} \rangle}}
    \xlongequal{{\rm By\ (2)}} \frac{{\bf q}^{\langle {\bf v|a-1-v} \rangle}}{{\bf q}^{\langle {\bf a-1-v|v} \rangle}} =  \frac{1}{h_{\bf v}}.
    \end{align*}

\vskip5pt

\noindent This completes the proof. \end{proof}

\begin{lemma}\label{lemmaI} \ Suppose that $\pi$ is a compatible permutation in $S_n$ with $\pi^2 = \Id$. Put
    $I=\{i \ | \ 1\le i\le n,  \ \pi(i)=i \}$ and $J =\{i \ | \ 1\le i\le n, \ \pi(i)\ne i \} $. Then

\vskip5pt

{\rm (1)} \ \ \ $\{i \ | \  i\in J,\  i<\pi(i) \}\cap \{\pi(i) \  | \  i\in J,\ i<\pi(i) \}=\emptyset;$

\vskip5pt \hskip25pt $J =\{i \ | \  i\in J,\  i<\pi(i) \}\cup \{\pi(i) \  | \  i\in J,\ i<\pi(i) \}.$

\vskip5pt

{\rm (2)} \ \ \ $q_{ij}^2=1$, $\forall \ i,j\in I.$

\vskip5pt

{\rm (3)} \ \ \ $h_{{\bf e}_i} = \prod\limits_{j \in I}  q_{ij}^{a_j-1}, \ \forall \ i\in I.$

\vskip5pt

{\rm (4)} \ \ \ Let $q_\pi$ be as in {\rm Remark \ref{rmkcommutator}}, i.e., $q_\pi = \prod\limits_{1\le j< k\le n}  ({\bf q}^{\langle \pi({{\bf e}_k}) |\pi({{\bf e}_j})\rangle})^{(a_k-1)(a_j-1)}.$ Then
$$q_\pi = \prod\limits_{j, k\in I, \ 1\le j <k \le n}  q_{kj}^{(a_k-1)(a_j-1)} = \pm 1.$$
$($If $I = \emptyset$, then $q_\pi$ is understood to be $1$.$)$\end{lemma}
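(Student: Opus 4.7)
My plan is to handle the four parts in sequence; parts (1)--(3) follow from direct manipulations using $\pi^2 = \Id$ and the compatibility axioms, while part (4) requires the main combinatorial work.

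For (1), $\pi|_J$ is a fixed-point-free involution (by $\pi^2 = \Id$), so $J$ partitions into orbits of size $2$; taking the smaller element of each pair gives the claimed decomposition. For (2), when $i, j \in I$ the compatibility identity $q_{\pi(i)\pi(j)} = q_{ji}$ reduces to $q_{ij} = q_{ji}$, which combined with $q_{ij}q_{ji} = 1$ forces $q_{ij}^2 = 1$. For (3), I would split $h_{{\bf e}_i} = \prod_{j=1}^n q_{ij}^{a_j-1}$ as a product over $j \in I$ and over $j \in J$; using (1) to pair $j$ with $\pi(j)$ inside $J$, together with $a_{\pi(j)} = a_j$ and $q_{i\pi(j)} = q_{\pi(i)\pi(j)} = q_{ji}$, each pair contributes $(q_{ij}q_{ji})^{a_j-1} = 1$, leaving exactly $\prod_{j \in I} q_{ij}^{a_j-1}$.

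Part (4) is the main obstacle. By Lemma~\ref{propeqq}(2), for $1 \le j < k \le n$ one has ${\bf q}^{\langle \pi({\bf e}_k)|\pi({\bf e}_j)\rangle} = 1$ when $\pi(k) \le \pi(j)$ and equals $q_{\pi(j)\pi(k)} = q_{kj}$ otherwise. Hence
\[
q_\pi \;=\; \prod_{\substack{1\le j<k\le n \\ \pi(j)<\pi(k)}} q_{kj}^{(a_k-1)(a_j-1)}.
\]
I would then classify contributing pairs $(j, k)$ by whether each index lies in $I$ or $J$. Pairs with both indices in $I$ automatically satisfy $\pi(j) < \pi(k)$ and produce exactly the asserted product; by (2), each factor is $\pm 1$, which will yield $q_\pi = \pm 1$ once the remaining contributions are shown to cancel.

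The cancellation of all other contributions is the heart of the proof. The within-orbit pair $(a, b)$ with $\{a, b\} \subset J$ and $a < b$ fails $\pi(a) < \pi(b)$, so it drops out. For cross-type contributions involving $i \in I$ and an orbit $\{a, b\} \subset J$ with $a<b$, I would check the three subcases $i < a$, $a < i < b$, $b < i$ and show that the (at most two) surviving factors combine to $(q_{ai}q_{bi})^{(a_a-1)(a_i-1)} = 1$, using compatibility $q_{ai} = q_{\pi(b)\pi(i)} = q_{ib} = q_{bi}^{-1}$. For contributions across two distinct orbits $\{a,b\}, \{c,d\} \subset J$ with $a < b$, $c < d$, and WLOG $a < c$, I would similarly treat the three interleavings $a<b<c<d$, $a<c<b<d$, $a<c<d<b$; in each case the surviving cross factors group into products such as $q_{ca}q_{db}$ and $q_{da}q_{cb}$, each trivial by relations like $q_{ca} = q_{\pi(d)\pi(b)} = q_{bd} = q_{db}^{-1}$. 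As an independent confirmation of $q_\pi=\pm 1$, applying Lemma~\ref{propusefuleq2}(3) with ${\bf u} = {\bf v} = {\bf a-1}$ and $\pi({\bf a-1}) = {\bf a-1}$ immediately gives $q_\pi^2 = 1$.
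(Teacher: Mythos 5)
Your proposal is correct: parts (1)--(3) coincide with the paper's proof, and in part (4) you arrive at the same key intermediate identity
$q_\pi = \prod_{j<k,\ \pi(j)<\pi(k)} q_{kj}^{(a_k-1)(a_j-1)}$
via Lemma \ref{propeqq}(2) and compatibility. The only substantive difference is how the cancellation of the terms involving $J$ is organized. The paper observes in one stroke that whenever $j\in J$ or $k\in J$, the index pair $(j,k)$ and its image $(\pi(j),\pi(k))$ are two \emph{distinct} pairs both occurring in the product, and their contributions multiply to $(q_{kj}q_{jk})^{(a_k-1)(a_j-1)}=1$; the fixed pairs of this involution are exactly those with $j,k\in I$. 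Your case analysis (within an orbit, $I$ versus a $J$-orbit in three relative positions, two $J$-orbits in three interleavings) carries out precisely these pairings by hand --- e.g.\ your grouping of $q_{ca}$ with $q_{db}$ is the involution pair $(a,c)\leftrightarrow(b,d)$ --- so it is logically equivalent but considerably longer; the abstract involution argument is the cleaner way to package it. Your closing remark that Lemma \ref{propusefuleq2}(3) with ${\bf u}={\bf v}={\bf a-1}$ and $\pi({\bf a-1})={\bf a-1}$ yields $q_\pi^2=1$ directly is a nice shortcut not present in the paper, though note it only recovers $q_\pi=\pm1$ and not the identification of $q_\pi$ with the product over $I$, which is what Theorem \ref{symmetriccase} and Section 7 actually use.
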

\begin{proof} {\rm (1)}. \ Assume that otherwise there is an element $i$ in the intersection. Then $i<\pi(i)$ and $i=\pi(j)$ for some $j\in J$ with $j<\pi(j)$. Since $\pi^2=\Id$, one gets a contradiction
    \[
        i=\pi(j)>j = \pi(i).
    \]

For any $j\in J$ with $j\notin \{i \ | \  i\in J,\  i<\pi(i) \}$, then $j>\pi(j)$. By $\pi^2=\Id$ one has $\pi(\pi(j)) = j >\pi(j)$. By definition $j = \pi(\pi(j))\in \{\pi(i) \  | \  i\in J,\ i<\pi(i) \}$. This justifies the second equality.

\vskip5pt

{\rm (2)}.  \ For $i,j\in I$, one has $q_{ij}^2 = q_{ij}q_{ij} = q_{ij}q_{\pi(j)\pi(i)}  = q_{ij}q_{ji}=1.$

\vskip5pt

{\rm (3)}. \  By definition $h_{{\bf e}_i} = \prod\limits_{1\le j \le n}  q_{ij}^{(a_j-1)}$ for $1\le i \le n$. Now let $i\in I$.
For any $j$ with $1\le j\le n$, by  \ $q_{i\pi(j)} =   q_{ji}$ and $a_{\pi(j)}=a_j$ one has $q_{ij}^{a_j-1}q_{i\pi(j)}^{a_{\pi(j)}-1} = (q_{ij}q_{ji})^{a_j-1} = 1$. By (1) one has
\begin{align*}\prod\limits_{j \in J}  q_{ij}^{(a_j-1)}& =\prod\limits_{j \in J, j<\pi(j)}  q_{ij}^{(a_j-1)}\prod\limits_{j=\pi(s), s\in J, s < j}  q_{ij}^{(a_j-1)}
=\prod\limits_{j \in J, j<\pi(j)}q_{ij}^{(a_j-1)}\prod\limits_{s\in J, s <\pi(s)}  q_{i\pi(s)}^{(a_{\pi(s)}-1)}
\\ & = \prod\limits_{j \in J, j<\pi(j)}  q_{ij}^{(a_j-1)}q_{i\pi(j)}^{(a_{\pi(j)}-1)} =1.\end{align*}
  Thus
    \begin{align*}
        h_{{\bf e}_i} = \prod\limits_{1\le j \le n}  q_{ij}^{(a_j-1)} = \prod\limits_{j \in I}  q_{ij}^{(a_j-1)}\prod\limits_{j \in J}  q_{ij}^{(a_j-1)}= \prod\limits_{j \in I}  q_{ij}^{(a_j-1)}, \ \forall \ i\in I.
    \end{align*}

\vskip5pt
{\rm (4)}. \ Since ${\bf q}^{\langle  {\bf e}_k|{\bf e}_j \rangle}=1$ for $k<j$, one has
    \begin{align*}q_\pi & =
        \prod_{1\le j <k \le n} ({\bf q}^{\langle  \pi({\bf e}_k)|\pi({\bf e}_j) \rangle})^{(a_k-1)(a_j-1)}
       \\&  =
        \prod_{\substack{1\le j <k \le n\\ 1\le \pi(j)<\pi(k)\le n}} ({\bf q}^{\langle  \pi({\bf e}_k)|\pi({\bf e}_j) \rangle})^{(a_k-1)(a_j-1)}\\
        & \xlongequal{{\rm Lem. \ref{propeqq}(2)}}
        \prod_{\substack{1\le j <k \le n\\ 1\le \pi(j)<\pi(k)\le n}}  (q_{\pi(j)\pi(k)})^{(a_k-1)(a_j-1)}\\
        & =
        \prod_{\substack{1\le j <k \le n\\ 1\le \pi(j)<\pi(k)\le n}}  q_{kj}^{(a_k-1)(a_j-1)}.
    \end{align*}

\vskip5pt

Assume that $1\le j<k\le n$ and $1\le \pi(j)<\pi(k)\le n$. Then $1\le \pi^2(j)<\pi^2(k)\le n$ since $\pi^2=\Id$. Using the same idea in the proof of (3), we have the following discussions.

\vskip5pt

If $j\in J$ or $k\in J$, then  $q_{kj}^{(a_k-1)(a_j-1)}$  and $q_{\pi(k)\pi(j)}^{(a_{\pi(k)}-1)(a_{\pi(j)}-1)}$ are different two terms occurring in the product $\prod\limits_{\substack{1\le j <k \le n\\ 1\le \pi(j)<\pi(k)\le n}}  q_{kj}^{(a_k-1)(a_j-1)}.$ Using $a_{\pi(i)}=a_i$ and $q_{\pi(i)\pi(j)}=q_{ji}$ for $1\le i,j\le n$, the product of these two terms is
$$q_{kj}^{(a_k-1)(a_j-1)}q_{\pi(k)\pi(j)}^{(a_{\pi(k)}-1)(a_{\pi(j)}-1)} = (q_{kj}q_{jk})^{(a_k-1)(a_j-1)} = 1.$$ Thus, in the product
$\prod\limits_{\substack{1\le j <k \le n\\ 1\le \pi(j)<\pi(k)\le n}}  q_{kj}^{(a_k-1)(a_j-1)}$, we only need to consider the product of those terms
$(q_{kj})^{(a_k-1)(a_j-1)}$ with $1\le j <k \le n, \ 1\le \pi(j)<\pi(k)\le n$ and $j\in I, \ k\in I$. Note that if $1\le j <k \le n$ and $j\in I, \ k\in I$, then
$1\le \pi(j)<\pi(k)\le n$  holds automatically. It follows that
   $$q_\pi =
        \prod_{\substack{1\le j <k \le n\\ 1\le \pi(j)<\pi(k)\le n}}  (q_{kj})^{(a_k-1)(a_j-1)} =
        \prod_{j, k \in I, j <k}  (q_{kj})^{(a_k-1)(a_j-1)}.$$
By (2), one gets $q_\pi=\pm 1$. This completes the proof.
\end{proof}

\section{\bf Bi-Frobenius quantum complete intersections}

For a bi-Frobenius algebra $(A, \phi, t, S)$, the Frobenius form $\phi$, the right integral $t$, Nakayama automorphisms, the antipode $S$, the right modular function $\alpha$,
and the right modular element $\mathfrak{a}$, provide fundamental structural information. The following result collects these for bi-Frobenius quantum complete intersections,
where (i), (ii) and (iii) below are known.

\begin{proposition}\label{thenecessity}  Let $(A,  \ \phi,  \ t, \ S)$ be a bi-Frobenius algebra structure on $A=A({\bf q,a})$.
Then

\vskip5pt

{\rm (i)} \ \  $\phi = x_{\bf a-1}^*\leftharpoonup z$ for some $z\in U(A);$ and $\phi = \sum\limits_{\mathbf{v}\in V} c_\mathbf{v}x_\mathbf{v}^*$  with \ $c_\mathbf{v} \in \mathbb K$  \ and \  $c_{\mathbf{a-1}} \neq 0.$

\vskip5pt

{\rm (ii)} \ \ $\alpha = \varepsilon;$ and $\varepsilon(x_\mathbf{v})=\delta_{\mathbf{v,0}}\ \ \text{for} \ \  \mathbf{v}\in V. $

\vskip5pt

{\rm (iii)} \ \ $t = cx_\mathbf{a-1}$ with $c\in\mathbb K-\{0\};$ and $A$ is unimodular with the space of right integrals $\mathbb Kx_{\bf a-1}$.

\vskip5pt

{\rm (iv)} \ \ $\mathcal{N}^2=\Id$, i.e., $h^2_{{\bf e}_i} = 1, \ 1\le i\le n$, where $\mathcal{N}$ is the canonical Nakayama automorphism of $A$.

\vskip5pt

{\rm (v)} \ \   If $S$ is a graded map, then $S^2 = \mathcal N$ and $S^4 = \Id$.

\vskip5pt

{\rm (vi)} \ \ If ${\rm Char} \mathbb K = 0,$ then $\mathfrak{a} = 1, \ \mathcal{N}_\phi^2=\Id$ and $S^4 = \Id$.
\end{proposition}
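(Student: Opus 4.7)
For (i), Lemma \ref{frobeniushomomorphism} directly supplies both the form $\phi = x^*_{\mathbf{a-1}} \leftharpoonup z$ with $z \in U(A)$ and the required expansion with $c_{\mathbf{a-1}} \neq 0$. For (ii), both $\varepsilon$ and $\alpha$ are algebra homomorphisms $A \to \mathbb K$ (the latter by Lemma \ref{lemmanak}(1)); since $A$ is local with residue field $\mathbb K$ there is a unique such map, giving $\varepsilon(x_{\mathbf v}) = \delta_{\mathbf v, \mathbf 0}$ and $\alpha = \varepsilon$. For (iii), the integral identity $tx = t\varepsilon(x)$ together with (ii) forces $tx_i = 0$ for every $i$, so $t$ lies in the right socle of $A$; a short calculation on $\mathcal B$ identifies this socle as $\mathbb K x_{\mathbf{a-1}}$, and $\phi(t) = 1$ (Lemma \ref{dt}(2)(ii)) ensures the scalar is nonzero. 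Unimodularity then follows since $x_i x_{\mathbf{a-1}} = 0$ also holds, which makes $x_{\mathbf{a-1}}$ simultaneously a left integral.

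The centrepiece is (iv). My plan is to show that $\mathcal N^2$ is an \emph{inner} automorphism of $A$ and then exploit locality. Substituting $\alpha = \varepsilon$ into Lemma \ref{lemmanak}(3)--(4) yields $\mathcal N_\phi(x) = \mathfrak a^{-1} S^2(x) \mathfrak a$ and $S^4(x) = \mathfrak a x \mathfrak a^{-1}$; together with $S^2(\mathfrak a) = \mathfrak a$ (from $S(\mathfrak a) = \mathfrak a^{-1}$ in Lemma \ref{lemmanak}(2)), iteration of the first formula produces $\mathcal N_\phi^2(x) = \mathfrak a^{-1} x \mathfrak a$, which is inner. By Lemma \ref{nakayama}, $\mathcal N_\phi$ and the canonical $\mathcal N$ differ by an inner automorphism (conjugation by $z$), so $\mathcal N_\phi^2$ and $\mathcal N^2$ also differ by an inner one; hence $\mathcal N^2 = \mathrm{Ad}(u)$ for some $u \in U(A)$. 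Writing $u = \varepsilon(u) + u'$ with $\varepsilon(u) \neq 0$ and $u' \in \rad A$, the identity $u x_i = h_{\mathbf e_i}^2 x_i u$ reduced modulo $(\rad A)^2$ yields $(1 - h_{\mathbf e_i}^2)\varepsilon(u) x_i \in (\rad A)^2 \cap A_1 = 0$, forcing $h_{\mathbf e_i}^2 = 1$. Lemma \ref{propeqq}(5) then promotes this to $\mathcal N^2 = \Id$ on all of $\mathcal B$.

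For (v), the graded hypothesis on $S$ gives $S^2(x_i) \in A_1$. The formula $S^2(x_i) = \mathfrak a\,\mathcal N(z x_i z^{-1})\,\mathfrak a^{-1}$, combined with the fact that conjugation by a unit with nonzero constant term acts trivially modulo $(\rad A)^2$, gives $S^2(x_i) \equiv h_{\mathbf e_i} x_i \pmod{(\rad A)^2}$. Since $A_1 \cap (\rad A)^2 = 0$, the congruence is in fact an equality, and two algebra endomorphisms agreeing on the generators $x_1,\ldots,x_n$ must coincide; hence $S^2 = \mathcal N$, and then $S^4 = \mathcal N^2 = \Id$ by (iv).

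For (vi), the task reduces to forcing $\mathfrak a = 1$; once this is in hand, $\mathcal N_\phi = S^2$, $\mathcal N_\phi^2 = \Id$, and $S^4 = \Id$ drop out of Lemma \ref{lemmanak}(3)--(4) and part (iv). By Lemma \ref{lemmanak}(1), every $\mathfrak a^n$ is group-like; linear independence of group-like elements together with $\dim_{\mathbb K} A < \infty$ forces $\mathfrak a^k = 1$ for some $k \geq 1$. Writing $\mathfrak a = 1 + r$ with $r \in \rad A$ (from $\varepsilon(\mathfrak a) = 1$) and expanding $(1+r)^k = 1$ produces $kr + \binom{k}{2} r^2 + \cdots = 0$; inspecting the lowest-degree homogeneous component of $r$ and using that $k$ is invertible in $\mathbb K$ (as ${\rm Char}\,\mathbb K = 0$) forces $r = 0$. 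The main technical obstacle I foresee throughout is the step in (iv) of converting the innerness of $\mathcal N^2$ into the pointwise identity $\mathcal N^2 = \Id$ via the $(\rad A)^2$-reduction, a manoeuvre that relies crucially on $A$ being local.
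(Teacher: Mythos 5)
Your proposal is correct and follows essentially the same route as the paper: parts (i)--(ii) via Lemma \ref{frobeniushomomorphism} and locality, part (iii) by pushing $t$ into the socle $\mathbb K x_{\bf a-1}$, part (iv) from the formula $\mathcal N_\phi^2(x)=\mathfrak a^{-1}x\mathfrak a$ combined with $\mathcal N_\phi(x_i)\equiv h_{{\bf e}_i}x_i \pmod{\rad^2 A}$, part (v) by the same congruence plus gradedness, and part (vi) by $\mathfrak a^k=1$ and ${\rm Char}\,\mathbb K=0$. The only differences (socle phrasing in (iii), the ``inner automorphism'' packaging in (iv), agreement on generators in (v)) are cosmetic rearrangements of the paper's own computations.
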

\begin{proof}  For (i) see Lemma \ref{frobeniushomomorphism}.  The assertion {\rm (ii)} follows from the fact that algebra homomorphism $A\longrightarrow \mathbb K$ is unique, for a local finite-dimensional $\mathbb K$-algebra $A$.
If fact, the right modular function $\alpha: A\longrightarrow \mathbb K$ is an algebra homomorphism (cf. Lemma \ref{lemmanak}(1)). Since $A$ is local, ${\rm Ker} \alpha = \rad A$ and $\alpha (1_A) = 1_A$.
While $\varepsilon: A \longrightarrow \mathbb K$ is also an algebra homomorphism, thus  $\alpha = \varepsilon$.

\vskip5pt

(iii) \ Write $t = \sum\limits_{\mathbf{v}\in V} c_{\mathbf{v}}x_\mathbf{v}$ with each $c_{\mathbf{v}}\in\mathbb K$. Since $t$ is a right integral of $A$ (cf. Lemma \ref{dt}(2)(iii)),
one has $t x=t \varepsilon(x) = x^*_{\mathbf{0}}(x) t$ for all \ $x\in A$. Thus, if
$\mathbf v < \mathbf{a-1}$, then
$$0 = x^*_{\mathbf{0}}(x_\mathbf{a-1-v}) t = tx_\mathbf{a-1-v} = \sum\limits_{\mathbf{u}\in V} c_{\mathbf{u}}x_\mathbf{u}x_\mathbf{a-1-v}
= \sum\limits_{\mathbf{u}\in V} c_{\mathbf{u}}{\bf q}^{\langle \mathbf u | \mathbf {a-1-v} \rangle}x_\mathbf{a-1+u-v}.$$
In particular, $c_{\mathbf{v}} {\bf q}^{\langle \mathbf v | \mathbf {a-1-v} \rangle}x_\mathbf{a-1} = 0$, which implies $c_{\mathbf{v}} = 0$, for all $\mathbf v < \mathbf{a-1}$. Thus $t = cx_\mathbf{a-1}$ with $c\in\mathbb K$. Since $A = t \leftharpoonup A^*$, $c\ne 0$.
This also proves that the space of right integrals is $\mathbb Kx_{\bf a-1}$.

\vskip5pt

Similarly, the space of left integrals is $\mathbb Kx_{\bf a-1}$.  Thus $A$ is unimodular.

\vskip5pt

(iv) \ By (ii), $\alpha = \varepsilon$. Thus for $x\in A$ one has
$$x\leftharpoonup \alpha = x\leftharpoonup \varepsilon = \sum \varepsilon (x_1)x_2 = (\varepsilon\otimes \Id)\Delta (x) = x$$ and  \
$\alpha\rightharpoonup x = \varepsilon\rightharpoonup x = \sum x_1\varepsilon (x_2) = x.$

\vskip5pt

Using Lemma \ref{lemmanak}(3) twice one gets
$$\mathcal{N}_\phi^2(x_i)= \mathcal{N}_\phi(\mathfrak{a}^{-1}S^2(\alpha\rightharpoonup x_i)\mathfrak{a})  = \mathcal{N}_\phi(\mathfrak{a}^{-1}S^2(x_i)\mathfrak{a}) = \mathfrak{a}^{-1}S^2( \mathfrak{a}^{-1}S^2( x_i)\mathfrak{a})\mathfrak{a}.$$ By Lemma \ref{lemmanak}(2) one has
$S^2(\mathfrak{a}^{-1}) = \mathfrak{a}^{-1}$ \ and \ $S^2(\mathfrak{a}) = \mathfrak{a}$. Thus $$\mathcal{N}_\phi^2(x_i) = \mathfrak{a}^{-1}S^2(\mathfrak{a}^{-1}S^2( x_i)\mathfrak{a})\mathfrak{a}
         = \mathfrak{a}^{-1}S^2(\mathfrak{a}^{-1})S^4( x_i)S^2(\mathfrak{a})\mathfrak{a} = \mathfrak{a}^{-2} S^4(x_i)\mathfrak{a}^2.$$

Note that the identity of algebra $A^*$ is $\varepsilon$. Thus $\alpha^{-1} = \varepsilon^{-1} = \varepsilon$.
It follows from Lemma \ref{lemmanak}(4) that
\begin{align*}
        \mathcal{N}_\phi^2(x_i) & = \mathfrak{a}^{-2} S^4(x_i)\mathfrak{a}^2 = \mathfrak{a}^{-1} (\alpha^{-1}\rightharpoonup x_i\leftharpoonup \alpha) \mathfrak{a} \\
        & = \mathfrak{a}^{-1} (\varepsilon \rightharpoonup x_i\leftharpoonup \varepsilon) \mathfrak{a} = \mathfrak{a}^{-1} x_i\mathfrak{a}.
    \end{align*}
Since $A$ is local and $\mathfrak{a}$ is an invertible element of $A$,  $\mathfrak{a}  = c1_A+r$ with $c\in\mathbb{K}-\{0\}, \ r\in \rad A$.
Since $\mathfrak{a}$ is a group-like element, thus $\varepsilon({\mathfrak a})= 1$ and ${\mathfrak a} = 1+r$. Thus
$$\mathcal{N}_\phi^2(x_i) = \mathfrak{a}^{-1} x_i\mathfrak{a} = x_i + \Sigma_i, \ \forall \ 1\le i\le n$$
where $\Sigma_i\in \rad^2 A$.

\vskip5pt

On the other hand, by [B, Lemma 3.1],  $\mathcal{N}(x_i) = h_{{\bf e}_i}x_i$ for $1\le i\le n$. Since $\phi = x_{\bf a-1}^*\leftharpoonup z$ for some $z \in U(A)$, it follows from Lemma \ref{nakayama}
that $$\mathcal{N}_\phi(x_i) = z\mathcal{N}(x_i)z^{-1} = zh_{{\bf e}_i}x_iz^{-1}  = h_{{\bf e}_i}x_i + \Sigma'_i$$
with $\Sigma'_i\in \rad^2 A$. Since $\mathcal N_\phi$ is an algebra automorphism, $\mathcal N_\phi(\rad^2 A) \subseteq \rad^2 A$. Thus
$$\mathcal{N}_\phi^2(x_i) = \mathcal{N}_\phi(h_{{\bf e}_i}x_i + \Sigma'_i) =  h^2_{{\bf e}_i}x_i + \Sigma''_i$$
with $\Sigma''_i\in \rad^2 A$. Comparing with $\mathcal{N}_\phi^2(x_i) = x_i + \Sigma_i$ one gets
    \[
        h_{{\bf e}_i}^2=1, \ \forall \ 1\le i\le n.
    \]
Thus  $\mathcal{N}^2(x_i) = h_{{\bf e}_i}^2 x_i = x_i$ for each $i$.  Since $\{x_i \ | \ 1\le i\le n\}$ is  a set of generators of $A$, one gets $\mathcal{N}^2 = \Id.$

\vskip5pt

(v) \ Assume that $S$ is a graded map. By Lemma \ref{lemmanak}(4) we have that
$$S^4(x_i) = \mathfrak{a} (\alpha^{-1}\rightharpoonup x_i\leftharpoonup \alpha) \mathfrak{a}^{-1}
= \mathfrak{a} (\alpha\rightharpoonup x_i\leftharpoonup \alpha) \mathfrak{a}^{-1} = \mathfrak{a} x_i\mathfrak{a}^{-1} = x_i + \Sigma_i, \ \forall \ 1\le i\le n$$
with $\Sigma_i\in \rad^2 A$. Since $S$ is graded, $S^4(x_i) = x_i \ (1\le i\le n)$. Hence $S^4 = \Id$.

\vskip5pt

Let $\phi = x_{\bf a-1}^*\leftharpoonup z$ with $z\in U(A)$. Then $\mathcal N_\phi(x_{\mathbf v}) = \mathcal N(zx_{\mathbf v}z^{-1}).$
By Lemma \ref{lemmanak}(3) and the assertion (ii) above one has
\begin{align*}S^2(x_{\mathbf v}) & = S^2(\varepsilon\rightharpoonup x_{\mathbf v}) = S^2(\alpha \rightharpoonup x_{\mathbf v})=
\mathfrak{a}\mathcal{N}_\phi(x_{\mathbf v}) \mathfrak{a}^{-1}=
\mathfrak{a}\mathcal N(zx_{\mathbf v}z^{-1}) \mathfrak{a}^{-1}. \\ & = \mathfrak{a}\mathcal N(z) \mathcal N(x_{\mathbf v}) \mathcal N (z^{-1}) \mathfrak{a}^{-1}
= h_{\mathbf v}\mathfrak{a}\mathcal N(z) x_{\mathbf v} (\mathfrak{a}\mathcal N (z))^{-1}.
\end{align*}
Note that $\mathfrak{a}\mathcal N(z) = c1_A + r\in U(A)$ with $(\mathfrak{a}\mathcal N (z))^{-1} = c^{-1}1_A + r'$, where $r, r'\in {\rad}(A)$.
Thus $$S^2(x_{\mathbf v})  = h_{\mathbf v} x_{\mathbf v} + \Sigma_{\mathbf v}$$ with $\Sigma_{\mathbf v}\in {\rad}^{|x_{\mathbf v}|+1}(A)$. Since $S$ is graded, it follows that
$\Sigma_{\mathbf v} = 0$ and hence $S^2(x_{\mathbf v})  = h_{\mathbf v} x_{\mathbf v} = \mathcal N(x_{\mathbf v}), \ \forall \ {\bf v}\in V$.
Thus $S^2 = \mathcal N$.

\vskip5pt

(vi) \ Note that $\mathfrak{a}^{n}=1$ for some positive integer $n$. In fact,  ${\mathfrak a}^m$ is a group-like element for $m\in\mathbb{Z}$ (cf. Lemma \ref{lemmanak}(1)). Thus  $\varepsilon({\mathfrak a})= 1$ and ${\mathfrak a} = 1+r$ for some $r\in \rad(A)$.
    Since distinct group-like elements are linearly independent, there are positive integers $m_1\ne m_2$ such that $\mathfrak{a}^{m_1}=\mathfrak{a}^{m_2}$.
Since  $\mathfrak{a}$ is invertible,  $\mathfrak{a}^{n}=1$ for some positive integer $n$.  Thus
    \[0=
    (1+r)^n-1=\sum_{i=0}^{n}\binom{n}{i}r^i-1=\sum_{i=1}^{n}\binom{n}{i}r^i =r(n 1_A +r'),
    \]
    where $r'=\sum\limits_{1\le i\le n}\binom{n}{i}r^{i-1}\in \rad(A)$ is nilpotent. Since ${\rm char}\mathbb{K}=0$, $n 1_A +r'$ is invertible, thus  $r=0$ and $\mathfrak{a}=1$.
Then from the proof of (iv) one gets \ $\mathcal{N}_\phi^2=\Id$ and $S^4 = \Id$.
\end{proof}
\begin{rmk}\label{N2} The fact that the canonical Nakayama automorphism $\mathcal N$ is of order $2$, for those quantum complete intersections $A$ which admit a bi-Frobenius algebra structure,
is a quite special property. Indeed, even for a quantum complete intersection, $\mathcal N_\phi$ is not in general of order $2$ for $\phi\ne x_{{\bf a-1}}^*$.
See {\rm Example \ref{exmnonsymmtric}}.
\end{rmk}

\section{\bf Quantum complete intersections with permutation antipodes}

Since actions of antipodes of all the known bi-Frobenius algebra structures on quantum complete intersections
are given by scalar multiplications  on the standard basis $\mathcal B = \{x_{\mathbf{v}} \ | \ \mathbf{v}\in V\}$ up to a permutation, it is natural to consider the following notion.

\begin{definition}\label{perS} \  Let \ $(A, \ \phi, \ t, \ S)$ be a  bi-Frobenius algebra structure on quantum complete intersection $A=A({\bf q,a})$.
Then $S$ is said to be a permutation antipode,
provided that there is a permutation  $\pi$ on $V$ and $c_{\mathbf v}\in \mathbb K, \ \forall \ \mathbf v\in V$,  such that $S(x_{\mathbf v}) = c_{\mathbf v} x_{\pi(\mathbf v)}, \ \forall \ \mathbf v\in V.$ \end{definition}

\vskip5pt

If this is the case, we also call $(S, \pi)$ {\it the antipode with permutation $\pi$}.

\vskip5pt

\subsection{Bi-Frobenius quantum complete intersections with permutation antipode} The following result gives properties of bi-Frobenius quantum complete intersections $A=A({\bf q,a})$ with permutation antipode; in the next
section, by construction, we will see they are also sufficient for   $A$ to become a bi-Frobenius algebra with permutation antipode.

\vskip5pt

Recall that $\pi\in S_n$ is a compatible permutation,
provided that $a_{\pi(i)}=a_i$ and $q_{\pi(i)\pi(j)}=q_{ji}$ for $1\le i,j\le n$ (cf. Definition \ref{compatible}).

\begin{theorem}\label{thmperS} If \ $A=A({\bf q,a})$ admits a bi-Frobenius algebra structure with permutation antipode $(S, \pi)$.
Then

\vskip5pt

{\rm (i)} \  $S(x_{\bf {a-1}}) = x_{\bf{a-1}};$ \ \ $\pi(\bf{a-1}) = \bf{a-1};$ \ \ $\pi^2 = \Id;$ \ \ $S$ is a graded map, \ \ $S^2 = \mathcal N$ and \ \ $S^4 = \Id$.

\vskip5pt

{\rm (ii)} \  $\pi$ induces a compatible permutation in $S_n$, still denoted by $\pi$, such that
$$\pi({\bf v}) =(v_{\pi^{-1}(1)},\cdots, v_{\pi^{-1}(n)}), \ \ \ \ x_{\pi(\mathbf v)} = x_1^{v_{\pi^{-1}(1)}} \cdots x_n^{v_{\pi^{-1}(n)}}, \ \ \forall \ {\bf v}\in V;$$ and
that $S(x_i) = c_ix_{\pi(i)},$ where each $c_i\in \mathbb K,$ satisfying

$$c_ic_{\pi(i)}=h_{{\bf e}_i}, \ \forall \ 1\le i\le n; \ \ \ \ \ \ \ q_\pi \prod\limits_{1\le i\le n} c_i^{a_i-1} =1
$$
where $q_\pi$ is defined in {\rm Remark \ref{rmkcommutator}}, i.e.,  $q_\pi = \prod\limits_{1\le j <k \le n} ({\bf q}^{\langle  \pi({\bf e}_k)|\pi({\bf e}_j) \rangle})^{(a_k-1)(a_j-1)}.$\end{theorem}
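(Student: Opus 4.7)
The plan is to first extract the shape of $S$ from general properties of bi-Frobenius algebras, then force compatibility of $\pi$ from the algebra anti-homomorphism property, and finally read off the two scalar identities by computing $S(x_{\bf a-1})$ in two ways.

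Since $A$ is unimodular by Proposition~\ref{thenecessity}(iii), Lemma~\ref{dt}(2)(vi) yields $S(t)=t$; with $t=cx_{\bf a-1}$ this forces $S(x_{\bf a-1})=x_{\bf a-1}$, so $\pi({\bf a-1})={\bf a-1}$ and $c_{\bf a-1}=1$. Bijectivity of $S$ gives $c_{\bf v}\ne 0$ for every ${\bf v}\in V$, and since $A$ is local, both $S$ and $S^{-1}$ preserve the filtration $\rad^k A\supseteq\rad^{k+1}A$. Comparing $S(x_{\bf v})=c_{\bf v}x_{\pi({\bf v})}\in\rad^{|x_{\bf v}|}A$ (and its symmetric version for $S^{-1}$) with the fact that $\rad^k A$ is spanned by the $x_{\bf u}$ with $|x_{\bf u}|\ge k$ forces $|x_{\pi({\bf v})}|=|x_{\bf v}|$ for all ${\bf v}$. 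Hence $S$ is graded and $\pi({\bf e}_i)={\bf e}_{\sigma(i)}$ for some $\sigma\in S_n$; applying $S$ as an algebra anti-homomorphism to $x_1^{v_1}\cdots x_n^{v_n}$ and invoking Lemma~\ref{commutator}(2) shows that the $V$-permutation $\pi$ is the linear extension of $\sigma$, so I henceforth identify $\pi\in S_n$.

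Since $S$ is graded, Proposition~\ref{thenecessity}(v) delivers $S^2=\mathcal N$ and $S^4=\Id$. Writing $c_i:=c_{{\bf e}_i}$ and evaluating $S^2(x_i)=c_ic_{\pi(i)}x_{\pi^2(i)}=\mathcal N(x_i)=h_{{\bf e}_i}x_i$ forces $\pi^2=\Id$ together with $c_ic_{\pi(i)}=h_{{\bf e}_i}$. For compatibility of $\pi$, applying $S$ to $x_i^{a_i}=0$ yields $x_{\pi(i)}^{a_i}=0$, hence $a_{\pi(i)}\le a_i$; the reverse inequality follows by applying the same argument to $\pi(i)$ and using $\pi^2=\Id$. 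Applying $S$ to $x_jx_i=q_{ij}x_ix_j$, cancelling the nonzero scalars and comparing with the commutation $x_{\pi(j)}x_{\pi(i)}=q_{\pi(i)\pi(j)}x_{\pi(i)}x_{\pi(j)}$, forces $q_{\pi(i)\pi(j)}=q_{ji}$.

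The remaining identity $q_\pi\prod_i c_i^{a_i-1}=1$ comes from evaluating $S(x_{\bf a-1})$ in two ways: the first step gives $x_{\bf a-1}$; the other expands $S(x_{\bf a-1})=S(x_n)^{a_n-1}\cdots S(x_1)^{a_1-1}=\bigl(\prod_i c_i^{a_i-1}\bigr)x_{\pi({\bf e}_n)}^{a_n-1}\cdots x_{\pi({\bf e}_1)}^{a_1-1}$, and Remark~\ref{rmkcommutator} rewrites the last monomial product as $q_\pi\,x_{\pi({\bf a-1})}=q_\pi\,x_{\bf a-1}$. The only genuinely delicate point in the plan is the passage to $S$ being graded; after that, everything reduces to matching coefficients on the standard basis.
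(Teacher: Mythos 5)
Your proof is correct and follows essentially the same route as the paper's: unimodularity gives $S(t)=t$ and hence $S(x_{\bf a-1})=x_{\bf a-1}$; the anti-homomorphism property applied to the generators and the relations $x_jx_i=q_{ij}x_ix_j$ produces the compatible permutation in $S_n$ and identifies the action on $V$ via Lemma \ref{commutator}(2); gradedness of $S$ feeds into Proposition \ref{thenecessity}(v) to give $S^2=\mathcal N$, whence $\pi^2=\Id$ and $c_ic_{\pi(i)}=h_{{\bf e}_i}$; and the last identity comes from computing $S(x_{\bf a-1})$ in two ways via Remark \ref{rmkcommutator}. The only local variations are that you obtain $|x_{\pi({\bf v})}|=|x_{\bf v}|$ from the fact that the anti-automorphisms $S$ and $S^{-1}$ preserve the radical filtration (the paper instead extracts $|x_{\pi({\bf e}_i)}|=1$ from the nonvanishing of $S(x_{\bf a-1})=c\,x_{\pi({\bf e}_n)}^{a_n-1}\cdots x_{\pi({\bf e}_1)}^{a_1-1}$), and that you derive $a_{\pi(i)}=a_i$ from $S(x_i^{a_i})=0$ together with $\pi^2=\Id$ rather than from the finite order of $\pi$; both alternatives are sound.
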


\begin{proof} \  By Proposition \ref{thenecessity}(iii), $t = cx_{\bf {a-1}}$ with $c\in\mathbb K-\{0\}$. By Lemma \ref{dt}(2)(vi) and Proposition \ref{thenecessity}(iii) one has $S(t) = t$. Thus
$$cx_{\bf {a-1}} = t = S(t) = S(cx_{\bf {a-1}}) = c S(x_{\bf {a-1}}).$$ Hence $S(x_{\bf {a-1}}) = x_{\bf{a-1}}.$ Therefore
$$x_{\bf{a-1}} = S(x_{\bf {a-1}}) = c_{\bf{a-1}} x_{\pi(\bf {a-1})}.$$ Thus $x_{\pi(\bf {a-1})} = x_{\bf{a-1}}$ and $c_{\bf{a-1}} = 1$, so $\pi(\bf{a-1}) = \bf{a-1}.$

\vskip5pt

By \begin{align*}0 & \ne S(x_{\mathbf {a-1}}) = S(x_1^{a_1-1}\cdots x_n^{a_n-1}) = S(x_n)^{a_n-1}\cdots S(x_1)^{a_1-1}
\\ & = S(x_{\mathbf {e}_n})^{a_n-1}\cdots S(x_{\mathbf {e}_1})^{a_1-1}  = c x_{\pi({\bf e}_n)}^{a_n-1}\cdots x_{\pi({\bf e}_1)}^{a_1-1}\end{align*}
with $c\in \mathbb K$, we see that each degree $|x_{\pi({\bf e}_i)}| = 1$, since otherwise one has
$$|x_{\pi({\bf e}_n)}^{a_n-1}\cdots x_{\pi({\bf e}_1)}^{a_1-1}| > \sum\limits_{i=1}^n (a_i - 1),$$ and then
$S(x_{\mathbf {a-1}}) = c x_{\pi({\bf e}_n)}^{a_n-1}\cdots x_{\pi({\bf e}_1)}^{a_1-1} = 0, $ a contradiction!
Thus, $\pi$ induces a permutation on the subset $\{{\bf e}_1, \ \cdots, \ {\bf e}_n\}$ of $V$, still denoted by $\pi$, namely, $\pi\in S_n$, such that
$\pi({\bf e}_i) = {\bf e}_{\pi(i)}, \ 1\le i\le n.$

\vskip5pt

Using the defining relation $x_jx_i = q_{ij}x_ix_j$, one can write $x_{\pi(n)}^{v_n}\cdots x_{\pi(1)}^{v_1} = c''x_1^{v_{\pi^{-1}(1)}} \cdots x_n^{v_{\pi^{-1}(n)}}$ with $c''\in\mathbb K$. Therefore, for ${\bf v}\in V$ one has
\begin{align*} c_{\mathbf v} x_{\pi(\mathbf v)} & = S(x_{\mathbf v}) = S(x_1^{v_1}\cdots x_n^{v_n}) = S(x_{\mathbf {e}_n})^{v_n}\cdots S(x_{\mathbf {e}_1})^{v_1}
\\ & = c' x_{\pi({\bf e}_n)}^{v_n}\cdots x_{\pi({\bf e}_1)}^{v_1}= c' x_{{\bf e}_{\pi(n)}}^{v_n}\cdots x_{{\bf e}_{\pi(1)}}^{v_1}\\ & =
c' x_{\pi(n)}^{v_n}\cdots x_{\pi(1)}^{v_1} = c'c''x_1^{v_{\pi^{-1}(1)}} \cdots x_n^{v_{\pi^{-1}(n)}}\end{align*}
with $c'c''\in\mathbb K.$ Thus $x_{\pi(\mathbf v)} = x_1^{v_{\pi^{-1}(1)}} \cdots x_n^{v_{\pi^{-1}(n)}}$ and hence $\pi({\bf v}) =(v_{\pi^{-1}(1)},\cdots, v_{\pi^{-1}(n)})$.

\vskip5pt

In particular, one has $$S(x_i) = S(x_{\mathbf {e}_i}) = c_{\mathbf {e}_i} x_{\pi(\mathbf {e}_i)} = c_{\mathbf {e}_i} x_{{\bf e}_{\pi(i)}} =  c_ix_{\pi(i)}, \ 1\le i\le n$$
with $c_i = c_{\mathbf {e}_i}$. By the argument above one sees that
$$x_{\pi({\bf e}_i)}^{a_i-1} = x_{\pi(i)}^{a_i-1}\ne 0, \ \ 1\le i\le n.$$
Thus $a_{\pi(i)} \ge a_i, \ \ 1\le i\le n$. Since $\pi\in S_n$ is of finite order, it follows that $a_{\pi(i)} = a_i, \ 1\le i\le n.$

\vskip10pt

For  $1\le i, j\le n$, one has
\begin{align*}
S(x_i)S(x_j)&= S(x_jx_i) = q_{ij}S(x_ix_j) = q_{ij}S(x_j)S(x_i)=c_ic_jq_{ij}x_{ \pi(j)}x_{ \pi(i)}\\
& = c_ic_jq_{ij}q_{\pi(i)\pi(j)}x_{ \pi(i)}x_{ \pi(j)}=q_{ij}q_{\pi(i)\pi(j)}S(x_i)S(x_j).
\end{align*}
Since $q_{ij}q_{ji}=1, \  \ \forall \ 1\le i,j\le n,$ one gets $q_{\pi(i)\pi(j)}=q_{ji}, \ \ \forall \ 1\le i,j\le n.$ Thus, $\pi\in S_n$ is a compatible permutation.

\vskip5pt

By $S(x_i) = c_ix_{\pi(i)}, \ \ 1\le i\le n,$ we know that \ $S$ is a graded map, and hence \ $S^2 = \mathcal N$ and \ $S^4 = \Id$, by Proposition \ref{thenecessity}(v).

\vskip5pt

By $S(x_{\mathbf v}) = c_{\mathbf v} x_{\pi(\mathbf v)}$ and $S^2 = \mathcal N$ one has \ $S^2(x_{\mathbf v})  = c_{\mathbf v} c_{\pi(\mathbf v)} x_{\pi^2(\mathbf v)}, \ \forall \ \mathbf v\in V,$ and
$$S^2(x_{\mathbf v})  = \mathcal N(x_{\mathbf v}) = h_{\mathbf v}x_{\mathbf v}.$$
It follows that
$c_{\mathbf v} c_{\pi(\mathbf v)} = h_{\mathbf v}, \ \ x_{\pi^2(\mathbf v)} = x_{\mathbf v}, \ \forall \ \mathbf v\in V.$ Thus $c_ic_{\pi(i)}=h_{{\bf e}_i}, \ 1\le i\le n$ and $\pi^2(\mathbf v) = \mathbf v, \ \forall \ \mathbf v\in V,$ i.e., $\pi^2 = \Id$.

\vskip5pt

Finally,  since $S(x_{\bf a-1})=x_{\bf a-1}$, one has
    \begin{align*}
        x_{\bf a-1}&=S(x_{\bf a-1}) = S(x_n)^{a_n-1}S(x_{n-1})^{a_{n-1}-1}\cdots S(x_1)^{a_1-1}\\
        & =  \prod_{1\le i\le n} c_i^{a_i-1} x_{\pi ({\bf e}_n)}^{a_n-1}x_{\pi ({\bf e}_{n-1})}^{a_{n-1}-1}\cdots x_{\pi ({\bf e}_1)}^{a_1-1}
    \end{align*}
By Remark \ref{rmkcommutator} one has
$$x_{\pi ({\bf e}_n)}^{a_n-1}x_{\pi ({\bf e}_{n-1})}^{a_{n-1}-1}\cdots x_{\pi ({\bf e}_1)}^{a_1-1}  = q_\pi x_{\pi({\bf a-1})} = q_\pi x_{\bf a-1}.$$
It follows that \ $q_\pi \prod\limits\limits_{1\le i\le n} c_i^{a_i-1}=1$.
This completes the proof. \end{proof}

\vskip5pt

\subsection{Equivalent descriptions of a permutation antipode}
\begin{proposition}\label{2perS} \ Let \ $(A, \ \phi, \ t, \ S)$ be a  bi-Frobenius algebra structure on $A=A({\bf q,a})$.
Then the following statements are equivalent$:$
\vskip5pt
{\rm (1)} \ $S$ is a permutation antipode$;$
\vskip5pt
{\rm (2)} \ There is  $\pi\in S_n$ and  $c_i\in \mathbb K, \ 1\le i\le n$,  such that
\ $S(x_i) = c_ix_{\pi(i)}, \ \ 1\le i\le n;$
\vskip5pt
{\rm (3)} \ There is a compatible permutation $\pi\in S_n$ and  $c_i\in \mathbb K, \ 1\le i\le n$,  such that
\ $S(x_i) = c_ix_{\pi(i)}, \ \ 1\le i\le n.$
\end{proposition}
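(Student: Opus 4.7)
The plan is to establish the cycle (1)$\,\Longrightarrow\,$(2)$\,\Longrightarrow\,$(3)$\,\Longrightarrow\,$(1); the implication (3)$\,\Longrightarrow\,$(2) is tautological. The technical content is essentially contained in the proof of Theorem \ref{thmperS}, and my proposal is to extract and reuse the appropriate pieces.

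For (1)$\,\Longrightarrow\,$(2), the task is to show that the permutation $\pi$ on $V$ restricts to the subset $\{{\bf e}_1,\dots,{\bf e}_n\}$. By Proposition \ref{thenecessity}(iii), $A$ is unimodular with $t=cx_{\bf a-1}$; Lemma \ref{dt}(2)(vi) then yields $S(x_{\bf a-1})=x_{\bf a-1}$. Using that $S$ is an algebra anti-homomorphism with $S(x_i)=S(x_{{\bf e}_i})=c_{{\bf e}_i}x_{\pi({\bf e}_i)}$, I would expand
$$
x_{\bf a-1}=S(x_{\bf a-1})=S(x_n)^{a_n-1}\cdots S(x_1)^{a_1-1}=\Bigl(\prod_i c_{{\bf e}_i}^{a_i-1}\Bigr)\,x_{\pi({\bf e}_n)}^{a_n-1}\cdots x_{\pi({\bf e}_1)}^{a_1-1},
$$
and observe that the right-hand side is nonzero, while its total degree is $\sum_i (a_i-1)|x_{\pi({\bf e}_i)}|$. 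Comparing with $|x_{\bf a-1}|=\sum_i(a_i-1)$ and using $|x_{\pi({\bf e}_i)}|\ge 1$ forces $|x_{\pi({\bf e}_i)}|=1$ for every $i$. Hence $\pi({\bf e}_i)={\bf e}_{\pi'(i)}$ for some $\pi'\in S_n$, and setting $c_i:=c_{{\bf e}_i}$ gives (2).

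For (2)$\,\Longrightarrow\,$(3), it remains to verify that the $\pi\in S_n$ of (2) is compatible. Bijectivity of $S$ forces each $c_i\ne 0$. Applying $S$ to the relation $x_j x_i=q_{ij}x_ix_j$ gives
$$
c_ic_j\,x_{\pi(i)}x_{\pi(j)}=S(x_i)S(x_j)=q_{ij}S(x_j)S(x_i)=q_{ij}q_{\pi(i)\pi(j)}\,c_ic_j\,x_{\pi(i)}x_{\pi(j)},
$$
and for $i\ne j$ the element $x_{\pi(i)}x_{\pi(j)}$ is a nonzero basis vector, so $q_{\pi(i)\pi(j)}=q_{ji}$. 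Applying $S$ to $x_i^{a_i}=0$ yields $c_i^{a_i}x_{\pi(i)}^{a_i}=0$, whence $x_{\pi(i)}^{a_i}=0$ and so $a_{\pi(i)}\le a_i$; since $\pi$ has finite order this iterates to $a_{\pi(i)}=a_i$. Thus $\pi$ is compatible.

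For (3)$\,\Longrightarrow\,$(1), given a compatible $\pi\in S_n$ with $S(x_i)=c_ix_{\pi(i)}$, for each $\mathbf v\in V$ the anti-homomorphism property gives
$$
S(x_{\mathbf v})=S(x_n)^{v_n}\cdots S(x_1)^{v_1}=\Bigl(\prod_i c_i^{v_i}\Bigr)\,x_{\pi({\bf e}_n)}^{v_n}\cdots x_{\pi({\bf e}_1)}^{v_1},
$$
and Lemma \ref{commutator}(2) rewrites the trailing product as a scalar multiple of $x_{\pi(\mathbf v)}$, where $\pi(\mathbf v)=(v_{\pi^{-1}(1)},\dots,v_{\pi^{-1}(n)})$. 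Compatibility gives $a_{\pi^{-1}(i)}=a_i$, so $v_{\pi^{-1}(i)}\le a_i-1$, i.e.\ $\pi(\mathbf v)\in V$, and $\mathbf v\mapsto \pi(\mathbf v)$ is a permutation of $V$, which is (1). The only genuinely delicate step is (1)$\,\Longrightarrow\,$(2): a priori the permutation on $V$ might send some ${\bf e}_i$ to a vector $\mathbf v$ with $|x_{\mathbf v}|\ge 2$, and ruling this out is exactly what the identity $S(x_{\bf a-1})=x_{\bf a-1}$ together with the degree count in $A$ accomplishes.
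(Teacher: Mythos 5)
Your proposal is correct and follows essentially the same route as the paper: the paper proves (1)$\Rightarrow$(3) by citing Theorem \ref{thmperS} (whose proof contains exactly your degree count from $S(x_{\bf a-1})=x_{\bf a-1}$ and your compatibility verification via $x_jx_i=q_{ij}x_ix_j$ and the finite order of $\pi$), and proves (2)$\Rightarrow$(1) by the same expansion of $S(x_{\mathbf v})$ that you use for (3)$\Rightarrow$(1). The only cosmetic difference is that you derive $a_{\pi(i)}=a_i$ from $x_{\pi(i)}^{a_i}=0$ (giving $a_{\pi(i)}\le a_i$) while the paper uses $x_{\pi(i)}^{a_i-1}\ne 0$ (giving $a_{\pi(i)}\ge a_i$); both close the argument with the finite order of $\pi$.
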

\begin{proof} The implication (1)$\Longrightarrow$ (3) follows from Theorem \ref{thmperS};  (3)$\Longrightarrow$ (2) is trivial. We show (2)$\Longrightarrow$(1).
Assume that there is  $\pi\in S_n$ and  $c_i\in \mathbb K, \ 1\le i\le n$,  such that
\ $S(x_i) = c_ix_{\pi(i)}, \  1\le i\le n.$ As in the proof of Theorem \ref{thmperS} one has  $a_{\pi(i)} = a_i, \  1\le i\le n.$ In fact,
Since $S$ is bijective, one has
$$0 \ne S(x_i)^{a_i-1} = c_i^{a_i-1} x_{\pi(i)}^{a_i-1}, \  1\le i\le n.$$
Thus $a_{\pi(i)} \ge a_i, \  1\le i\le n$. Since $\pi\in S_n$ is of finite order,  $a_{\pi(i)} = a_i, \ 1\le i\le n.$

\vskip5pt

Define a permutation on $V$, still denoted by $\pi$, as follows:
$$\pi({\bf v}) =(v_{\pi^{-1}(1)}, \cdots, v_{\pi^{-1}(n)}), \ \ \forall \ {\bf v} = (v_1, \cdots, v_n)\in V.$$
Since $v_i\le a_i-1$ and \ $a_{\pi(i)} = a_i$ for \ $1\le i\le n$, it follows that
$$v_{\pi^{-1}(i)} \le a_{\pi^{-1}(i)} -1 = a_i -1, \ 1\le i\le n.$$
This implies that \ $\pi({\bf v})\in V$, i.e.,  $\pi$ is a well-defined permutation on $V$.
Thus, for ${\bf v}\in V$ one has
\begin{align*}S(x_{\mathbf v}) & = S(x_1^{v_1}\cdots x_n^{v_n}) = S(x_n)^{v_n}\cdots S(x_1)^{v_1}
\\ & = c_1\cdots c_n x_{\pi(n)}^{v_n}\cdots x_{\pi(1)}^{v_1} = c_1\cdots c_nc'x_1^{v_{\pi^{-1}(1)}} \cdots x_n^{v_{\pi^{-1}(n)}}\\& = c_{\mathbf v} x_{\pi(\mathbf v)}\end{align*}
where $c_{\mathbf v} = c_1\cdots c_nc'\in\mathbb K$. By definition $S$ is a permutation antipode.
\end{proof}

\subsection {\bf The case of $a_i\ge 3$}

\begin{proposition} \label {a>2}\ For $A=A({\bf q,a})$ with $a_i\ge 3$ and $q_{ij}\ne 1$ for $1\le i\ne j\le n$, if $A$ admits a bi-Frobenius algebra structure such that $S$ a graded map,
    then $S$ is necessarily a permutation antipode.
\end{proposition}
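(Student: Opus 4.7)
The plan is to combine the graded hypothesis on $S$ with the anti-homomorphism property, reducing the claim to showing that the matrix of $S$ on the degree-one component is monomial, and then invoking Proposition \ref{2perS}.

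First, since $S$ is graded and bijective, it restricts to a linear automorphism of the degree-one component $\bigoplus_{i=1}^n \mathbb{K}x_i$. Hence I would write
\[
S(x_i) = \sum_{j=1}^n \lambda_{ij} x_j, \qquad 1 \le i \le n,
\]
with $(\lambda_{ij}) \in \mathrm{GL}_n(\mathbb K)$. Put $y_i := S(x_i)$. Since $S$ is an algebra anti-homomorphism, applying $S$ to the defining relation $x_j x_i = q_{ij} x_i x_j$ gives $y_i y_j = q_{ij} y_j y_i$ for all $i,j$.

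The key observation uses the hypothesis $a_i \ge 3$: the elements $\{x_k^2 : 1\le k\le n\} \cup \{x_k x_l : 1\le k<l\le n\}$ form a basis of the degree-two component of $A$. I would then expand both sides of $y_i y_j = q_{ij} y_j y_i$ in this basis using the rewriting rule $x_l x_k = q_{kl} x_k x_l$ for $k<l$, and read off the coefficient of each $x_k^2$. This produces, for every $i \ne j$ and every $k$, the scalar identity
\[
\lambda_{ik}\lambda_{jk}(1 - q_{ij}) = 0.
\]
Since by hypothesis $q_{ij} \ne 1$ whenever $i \ne j$, this forces $\lambda_{ik}\lambda_{jk} = 0$ for all $i \ne j$ and all $k$; in other words, each column of $(\lambda_{ij})$ contains at most one nonzero entry. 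Combined with the invertibility of $(\lambda_{ij})$, each column (and therefore each row) contains exactly one nonzero entry, so $(\lambda_{ij})$ is a monomial matrix.

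This monomial structure means there is a permutation $\pi \in S_n$ and scalars $c_i \in \mathbb{K}^\times$ such that $S(x_i) = c_i x_{\pi(i)}$ for $1\le i\le n$. By Proposition \ref{2perS} (implication (2)$\Rightarrow$(1), which does not require that $\pi$ be compatible), this means $S$ is a permutation antipode, finishing the proof. The whole argument hinges on step~5, where the assumption $a_i\ge 3$ keeps the monomials $x_k^2$ alive in degree two and the assumption $q_{ij}\ne 1$ turns the degree-two comparison into the vanishing statement for pairs of matrix entries; this is the only genuinely substantive step, and the rest is bookkeeping.
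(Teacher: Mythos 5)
Your proposal is correct and follows essentially the same route as the paper's own proof: express $S$ on the degree-one component as an invertible matrix, apply the anti-homomorphism property to the relation $x_jx_i=q_{ij}x_ix_j$, compare coefficients of $x_k^2$ (which survive because $a_i\ge 3$) to conclude the matrix is monomial, and finish with Proposition \ref{2perS}. No gaps.
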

\begin{proof}
    Since $S$ is graded, one can assume that $S(x_i) =\sum\limits_{1\le j\le n} m_{ij}x_j, \ \forall \ 1\le i\le n.$ Since $S$ is bijective,  $M=(m_{ij})$ is an invertible matrix. By $x_jx_i-q_{ij}x_ix_j=0$ one has
$$S(x_i)S(x_j) = q_{ij}S(x_j)S(x_i),\ \forall \ 1\le i,j\le n.$$
That is $$\sum\limits_{1\le s, t \le n} m_{is}m_{jt} x_sx_t= q_{ij}\sum\limits_{1\le s, t \le n} m_{js}m_{it}x_sx_t,\ \forall \ 1\le i, j\le n.$$
    Since $a_i\ge 3$ for $1\le i\le n$, it follows that $x_s^2\ne 0, \ 1\le s\le n$.
    Comparing the coefficient of $x_s^2$ in the both sides of the equality above, one gets
    $$m_{is}m_{js}=q_{ij}m_{is}m_{js}, \ \forall \ 1\le i, j, s\le n.$$ Since by assumption $q_{ij}\ne 1$ for $1\le i\ne j\le n$, one gets
    \[
    m_{is}m_{js}=0, \ \forall \ 1\le i,j,s\le n,\ i\ne j.
    \]
This implies that in the $s$-th ($1\le s\le n$) column of $M$  there is at most one nonzero element.
Since  $M$ is invertible, in each column of $M$ there is a unique nonzero element; and hence in each row of $M$ there is a unique nonzero element.
Thus,  there is $\pi\in S_n$ such that $m_{i\pi(i)}\ne 0, \ 1\le i\le n,$ and $S(x_i) = m_{i\pi(i)}x_{\pi(i)}, \ 1\le i\le n$. By Proposition \ref{2perS}, $S$ is
a permutation antipode.
\end{proof}

\begin{rmk} \label{rmka>2} {\rm Proposition \ref{a>2}} actually shows that when $a_i\ge 3$ and $q_{ij}\ne 1$ for $1\le i\ne j\le n$, any graded anti-automorphism $S$ of $A$ has the property  $S(x_i) = c_ix_{\pi(i)}$ for some  $c_i\in\mathbb{K},$ $1\le i\le n$, and some compatible permutation $\pi\in S_n$. The similar result for graded automorphisms of quantum affine $n$-space has been obtained by
{\rm S. Ceken, J. H. Palmieri, Y. H. Wang, J. J. Zhang} in {\rm \cite[Proposition 3.9]{CPWZ2016}}.
The similar considerations have also been discussed for affine connected graded algebras and some quotients of quantum affine $n$-space e.g. in {\rm \cite{Ga2013}, \cite{BZ2017}}.
\end{rmk}

\section{\bf Construction of bi-Frobenius quantum complete intersections}

We will construct a class of bi-Frobenius algebra structures
on $A = A({\bf q, a})$ with permutation antipode.
A main difficulty   is the construction of comultiplications such that $A$ becomes
a bi-Frobenius algebra.

\vskip5pt

Recall that $\pi\in S_n$ is a compatible permutation,
if $a_{\pi(i)}=a_i$ and $q_{\pi(i)\pi(j)}=q_{ji}$ for $1\le i,j\le n$.
The following main result gives a sufficient and necessary condition such that $A$ becomes
a bi-Frobenius algebra with permutation antipode.

\begin{theorem}\label{mainthm}  Quantum complete intersection $A({\bf q,a })$ admits a bi-Frobenius algebra structure with permutation antipode
if and only if there is a compatible permutation $\pi\in S_n$ with $\pi^2 = \Id$ and there are $c_i\in\mathbb K, \ 1\le i\le n$,  such that
$$c_ic_{\pi(i)}=h_{{\bf e}_i},  \ \ \ \ \ \ \ q_\pi \prod\limits_{1\le i\le n} c_i^{a_i-1} =1$$
where $q_\pi = \prod\limits_{1\le j <k \le n} ({\bf q}^{\langle  \pi({\bf e}_k)|\pi({\bf e}_j) \rangle})^{(a_k-1)(a_j-1)}.$
\vskip5pt

If this is the case, then  $(A, \ x_{\bf a-1}^*, \ x_{\bf a-1}, \ S)$ is a bi-Frobenius algebra,
where  the comultiplication $\Delta$ is given by $(6.3)$ and $S$ is given by $(\ref{equationS})$ below. In particular, $S(x_i) = c_ix_{\pi(i)}, \ 1\le i\le n$.
\end{theorem}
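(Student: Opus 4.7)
The proof has two directions. The forward direction is immediate from Theorem \ref{thmperS}: any bi-Frobenius structure on $A$ with permutation antipode $(S, \pi)$ forces $\pi$ to be compatible and involutive, and supplies scalars $c_i = c_{{\bf e}_i}$ satisfying both listed identities. The substantial content is sufficiency, which I would establish by explicitly constructing the quadruple $(\phi, t, S, \Delta)$.

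For sufficiency, set $\phi := x_{\bf a-1}^*$ and $t := x_{\bf a-1}$. By Lemma \ref{frobeniushomomorphism} the first is a Frobenius homomorphism, and by Proposition \ref{thenecessity}(iii) the second is the natural candidate for right integral. Define $S$ on generators by $S(x_i) = c_i x_{\pi(i)}$ and extend as an algebra anti-homomorphism. The well-definedness of $S$ is the first check: $a_{\pi(i)} = a_i$ preserves the truncation relations $x_i^{a_i}=0$, while $q_{\pi(i)\pi(j)} = q_{ji}$ combined with $q_{ij}q_{ji} = 1$ preserves the skew-commutation relations. The hypothesis $c_i c_{\pi(i)} = h_{{\bf e}_i}$ together with $\pi^2 = \Id$ then yields $S^2 = \mathcal{N}$ on generators, hence on $A$, so $S^4 = \Id$; and the hypothesis $q_\pi \prod_i c_i^{a_i-1} = 1$ is exactly what is needed, via Remark \ref{rmkcommutator} and Lemma \ref{commutator}(2), to force $S(x_{\bf a-1}) = x_{\bf a-1}$, so $S(t) = t$.

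The heart of the argument is constructing a comultiplication $\Delta$ such that all four bi-Frobenius axioms hold. A natural guiding principle is that $\Delta$ is essentially forced by the antipode identity $S(a) = \sum \phi(t_1 a) t_2$: the left $A$-module isomorphism $\theta : A \to A^*, \ a \mapsto a \rightharpoonup \phi$ is already determined by $\phi$ and $S$ is now fixed, so the right $A^*$-module isomorphism $\kappa : A^* \to A, \ f \mapsto t \leftharpoonup f$ must equal $S \circ \theta^{-1}$. This forces the coefficients of $\Delta(t)$ on the basis $\mathcal{B} \otimes \mathcal{B}$; propagating via the Frobenius coalgebra condition $A = t \leftharpoonup A^*$ then recovers $\Delta(x_{\bf v})$ for every ${\bf v}$ as an explicit sum $\sum_{{\bf u}+{\bf w}={\bf v}} \lambda^{\bf v}_{{\bf u}, {\bf w}} \, x_{\bf u} \otimes x_{\bf w}$ whose coefficients are built from the structure coefficients ${\bf q}^{\langle \cdot | \cdot \rangle}$, the values $h_{\bf v}$, and the $c_i$'s; this is formula (6.4). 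What remains is to verify coassociativity, the counit property $\varepsilon(x_{\bf v}) = \delta_{{\bf v},{\bf 0}}$ together with $\Delta(1)=1\otimes 1$, and that $S$ is a coalgebra anti-homomorphism. I expect coassociativity to be the principal obstacle, as it produces sums of products of structure coefficients that must telescope using precisely Lemma \ref{propusefuleq2}, the involutivity $\pi^2 = \Id$, and the identity $h_{\bf v} h_{\pi({\bf v})} = 1$; the coalgebra anti-homomorphism property of $S$ will then follow by direct basis computation using $S^2 = \mathcal{N}$ and the relations $c_i c_{\pi(i)} = h_{{\bf e}_i}$.
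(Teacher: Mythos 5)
Your forward direction is exactly the paper's: it is an immediate citation of Theorem \ref{thmperS}. The gap is in the sufficiency direction, where the comultiplication is never actually produced and your guiding principle for producing it is not correct as stated. The antipode identity $S(a)=\sum\phi(t_1a)t_2$ only determines $\Delta(t)$ (equivalently, the map $\kappa$), not $\Delta$ on the rest of $A$; the Frobenius coalgebra condition $A=t\leftharpoonup A^*$ does not then ``propagate'' $\Delta$ to the other basis elements, because computing $\Delta(t\leftharpoonup f)$ via coassociativity requires already knowing $\Delta$ on the tensor legs of $\Delta(t)$, which span $A$. So there is genuine freedom in choosing $\Delta$ away from $x_{\bf a-1}$, and the construction must make a choice and then verify the axioms for it. The paper's choice (6.4) is the simplest possible: every $x_{\bf v}$ with ${\bf v}\in V-\{{\bf 0},{\bf a-1}\}$ is declared primitive, and only $\Delta(x_{\bf a-1})=\sum_{{\bf v}\in V}g_{{\bf a-1-v},\pi({\bf v})}\,x_{\bf a-1-v}\otimes x_{\pi({\bf v})}$ carries nontrivial coefficients, given by the explicit formula (6.2). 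With that choice coassociativity is a two-line check (Lemma \ref{lemcoalg}), so your expectation that coassociativity is ``the principal obstacle'' points toward a more elaborate (e.g.\ graded, binomial-type) $\Delta$ of the shape $\sum_{{\bf u}+{\bf w}={\bf v}}\lambda^{\bf v}_{{\bf u},{\bf w}}x_{\bf u}\otimes x_{\bf w}$ for all ${\bf v}$, which the paper explicitly does not use (Remark \ref{difference}) and for which the axioms would not telescope in the way you hope.

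The actual work, which your proposal does not engage, is twofold: (i) verifying that the $S$ induced by $(\phi,t,\Delta)$ --- which comes out as formula (6.5), with $S(x_{\bf v})=\prod_ic_i^{v_i}\prod_{j<k}({\bf q}^{\langle\pi({\bf e}_k)|\pi({\bf e}_j)\rangle})^{v_jv_k}x_{\pi({\bf v})}$ --- is an algebra anti-homomorphism, which reduces to the identity ${\bf q}^{\langle\pi({\bf v})|\pi({\bf u})\rangle}={\bf q}^{\langle{\bf u}|{\bf v}\rangle}\prod_{j<k}({\bf q}^{\langle\pi({\bf e}_k)|\pi({\bf e}_j)\rangle})^{u_jv_k+u_kv_j}$ of Lemma \ref{propusefuleq2}(3); and (ii) verifying that $S$ is a coalgebra anti-homomorphism, whose only nontrivial instance is on $x_{\bf a-1}$ and reduces to the scalar identity $g_{{\bf a-1-v},\pi({\bf v})}g_{{\bf a-1-\pi(v)},{\bf v}}{\bf q}^{\langle{\bf a-1-\pi(v)}|\pi({\bf v})\rangle}{\bf q}^{\langle{\bf v}|{\bf a-1-v}\rangle}=\prod_i(c_ic_{\pi(i)}/h_{{\bf e}_i})^{v_i}=1$, i.e.\ precisely where the hypothesis $c_ic_{\pi(i)}=h_{{\bf e}_i}$ is consumed. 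Your proposal correctly locates where $q_\pi\prod_ic_i^{a_i-1}=1$ is used (to get $S(x_{\bf a-1})=x_{\bf a-1}$, equivalently $g_{{\bf 0},{\bf a-1}}=1$), but without the explicit $\Delta$ and these two verifications the sufficiency argument is not complete.
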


\begin{rmk} If $\mathbb K$ contains $\sqrt{-1}$ and $q_{ij} = \pm 1, \ 1\le i, j\le n$, then one can take $\pi = \Id$ and choose some $c_i\in\mathbb K, \ 1\le i\le n$, such that all the  conditions
in {\rm Theorem \ref{mainthm}} are satisfied. This special case has been treated in {\rm [JZ, Theorem 3.2]}.
\end{rmk}

To prove Theorem \ref{mainthm}, we need some preparations.

\subsection{\bf Structure constants $g_{\bf a-1-v, \pi(v)}$}
To get coalgebra structures on $A=  A({\bf q,a})$, one needs to define elements $g_{\bf a-1-v, \pi(v)}\in\mathbb K$ for each ${\bf v}\in V$,
which will be taken as the structure constants.

\vskip5pt

{\bf Setting up:} \ From now on, we consider  $A =  A({\bf q,a})$ such that
there is a compatible permutation $\pi\in S_n$ with $\pi^2 = \Id$, and there are $c_i\in\mathbb K, \ 1\le i\le n$,  such that
$$c_ic_{\pi(i)}=h_{{\bf e}_i}, \ 1\le i, j\le n;  \ \ \ \ \ \ q_\pi\prod\limits_{1\le i\le n} c_i^{a_i-1} = 1.$$

\vskip5pt

Since $a_{\pi(i)}=a_{i}, \  1\le i\le n,$ it follows that if ${\bf  v}= (v_1, \cdots, v_n)\in V$ then
$$0\le v_{\pi^{-1}(i)} \le a_{\pi^{-1}(i)}-1 = a_i -1, \ 1\le i\le n.$$
Thus $\pi(V)\subseteq V$, and hence $\pi$ induces an action on $V$ as follows, still denoted by $\pi$,
$$\pi({\bf v}) =(v_{\pi^{-1}(1)},\cdots, v_{\pi^{-1}(n)}), \ \ \forall \ {\bf v} = (v_1, \cdots, v_n)\in V.$$
Thus $|x_{\pi({\bf v})}| = |x_{\bf v}|, \ \forall \ {\bf v}\in V$, and $\pi({\bf 0}) = {\bf 0}, \ \pi({\bf e}_i) = {\bf e}_{\pi(i)}, \ 1\le i\le n.$ By Lemma \ref{propusefuleq2}(1) one has $\pi({\bf a-1}) = {\bf a-1}.$
We will use this induced action throughout, without further explanations.

\vskip5pt

For ${\bf v}\in V$, define $g_{\bf a-1-v, \pi(v)}\in\mathbb K$ as follows:
\begin{align}\label{equationg}
    g_{\bf a-1-v,\pi(v)} =\frac{1}{{\bf q}^{\langle {\bf a-1-v|v} \rangle}}
    \prod_{1\le i\le n} c_{i}^{v_i}\prod_{1\le j <k \le n} ({\bf q}^{\langle  \pi({\bf e}_k)|\pi({\bf e}_j) \rangle})^{v_jv_k}, \ \forall \ {\bf v}\in V.
\end{align}
Then one has $g_{{\bf a-1}-{\bf e}_i,\pi({\bf e}_i)} = \frac{c_{i}}{{\bf q}^{\langle{\bf a-1}-{\bf e}_i|{\bf e}_i \rangle}}, \ \forall \ 1\le i\le n.$

\vskip5pt

By $\pi^2 = \Id$ one has
\begin{align*}
    g_{\bf a-1-\pi(v),v}= \frac{1}{{\bf q}^{\langle {\bf a-1-\pi(v)|\pi(v)} \rangle}}\prod_{1\le i\le n} c_{i}^{v_{\pi(i)}}\prod_{1\le j <k \le n} ({\bf q}^{\langle  \pi({\bf e}_k)|\pi({\bf e}_j) \rangle})^{v_{\pi(j)}v_{\pi(k)}}
    , \ \forall \ {\bf v}\in V.
\end{align*}
For later use we need an alternative expression of $g_{\bf a-1-\pi(v),v}$, for which we now deduce  as follows.
Note that ${\bf q}^{\langle \pi({\bf e}_k)|\pi({\bf e}_j) \rangle}=1$ if $1\le \pi(k)\le \pi(j)\le n$. Thus

\begin{align*}
    \prod_{1\le j <k \le n} ({\bf q}^{\langle  \pi({\bf e}_k)|\pi({\bf e}_j) \rangle})^{v_{\pi(j)}v_{\pi(k)}}
    & =
    \prod_{1\le j <k \le n, \ 1\le \pi(j)< \pi(k)\le n} ({\bf q}^{\langle  \pi({\bf e}_k)|\pi({\bf e}_j) \rangle})^{v_{\pi(j)}v_{\pi(k)}}\\
    & \xlongequal{{\rm Lem.}\ \ref{propeqq}(2)}
    \prod_{1\le j <k \le n,\ 1\le \pi(j)< \pi(k)\le n} (q_{\pi(j)\pi(k)})^{v_{\pi(j)}v_{\pi(k)}}\\
    & =
    \prod_{ 1\le \pi^{-1}(s)< \pi^{-1}(t)\le n, \ 1\le s <t \le n} (q_{st})^{v_sv_t}\\
    & =
    \prod_{1\le \pi(j) <\pi(k) \le n,\ 1\le j< k\le n} (q_{jk})^{v_{j}v_{k}}\\
    & =
    \prod_{1\le \pi(j) <\pi(k) \le n,\ 1\le j< k\le n} (q_{\pi(k)\pi(j)})^{v_{j}v_{k}}\\
    & =
    \prod_{1\le \pi(j) <\pi(k) \le n,\ 1\le j< k\le n} (\frac{1}{{\bf q}^{\langle  \pi({\bf e}_k)|\pi({\bf e}_j) \rangle}})^{v_{k}v_{j}}\\
    & =
    \prod_{1\le j<k \le n}(\frac{1}{{\bf q}^{\langle\pi({\bf e}_k)|\pi({\bf e}_j) \rangle}})^{v_{j}v_{k}}.
\end{align*}
This proved the equality (6.2) below.

\vskip5pt

\begin{lemma} \label{g0a-1} \ For $A =  A({\bf q,a})$, assume that
there is a compatible permutation $\pi\in S_n$ with $\pi^2 = \Id$ and $c_i\in\mathbb K, \ 1\le i\le n$,  such that
$$c_ic_{\pi(i)}=h_{{\bf e}_i}, \ 1\le i, j\le n; \ \ \ \ \ \ \ q_\pi\prod\limits_{1\le i\le n} c_i^{a_i-1}=1.$$ Let $g_{\bf a-1-v, \pi(v)}\in\mathbb K$ be defined as in $(6.2)$.
Then

\begin{align}\label{equationpig}
    g_{\bf a-1-\pi(v),v}= \frac{1}{{\bf q}^{\langle {\bf a-1-\pi(v)|\pi(v)} \rangle}}\prod_{1\le i\le n} c_{\pi(i)}^{v_i}\prod_{1\le j<k \le n}(\frac{1}{{\bf q}^{\langle\pi({\bf e}_k)|\pi({\bf e}_j) \rangle}})^{v_{j}v_{k}}
    , \ \forall \ {\bf v}\in V.
\end{align}
And one has \ $g_{\bf a-1,0}=1 = g_{\bf 0,a-1}$.
\end{lemma}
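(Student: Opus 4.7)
The plan is to verify both assertions of the lemma by direct substitution in the defining formula (6.2), building on the chain of manipulations already displayed immediately above the lemma statement.

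For the alternative expression (6.3), I would start by substituting $\mathbf{v}\mapsto \pi(\mathbf{v})$ (using $\pi^2=\Id$) into the definition (6.2). This immediately produces the preliminary form with $\prod_i c_i^{v_{\pi(i)}}$ and $\prod_{j<k}({\bf q}^{\langle\pi({\bf e}_k)|\pi({\bf e}_j)\rangle})^{v_{\pi(j)}v_{\pi(k)}}$. The first product is then rewritten as $\prod_i c_{\pi(i)}^{v_i}$ by the change of index $i\leftrightarrow \pi(i)$ (again using $\pi^2=\Id$). For the second product, which is the only genuinely nontrivial piece, I would restrict the index set to pairs $j<k$ with $\pi(j)<\pi(k)$ (the other factors are $1$ by Lemma \ref{propeqq}(2)), rewrite each factor as $q_{\pi(j)\pi(k)}^{v_{\pi(j)}v_{\pi(k)}}$, relabel $s=\pi(j), t=\pi(k)$, apply compatibility $q_{st}=q_{\pi(t)\pi(s)}$ to bring the indices back through $\pi$, and finally invert via Lemma \ref{propeqq}(2) to end up with $(1/{\bf q}^{\langle\pi({\bf e}_k)|\pi({\bf e}_j)\rangle})^{v_j v_k}$ summed over all $j<k$. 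This is exactly the displayed computation preceding the lemma.

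For the boundary identities, set $\mathbf{v}=\mathbf{0}$ in (6.2): every factor $c_i^0$ and every exponent in the second product vanishes, and ${\bf q}^{\langle\mathbf{a-1}|\mathbf{0}\rangle}=1$ by Lemma \ref{propeqq}, giving $g_{\mathbf{a-1},\mathbf{0}}=1$. For $g_{\mathbf{0},\mathbf{a-1}}$, I would instead use (6.3) with $\mathbf{v}=\mathbf{a-1}$; Lemma \ref{propusefuleq2}(1) gives $\pi(\mathbf{a-1})=\mathbf{a-1}$, so the left side is indeed $g_{\mathbf{0},\mathbf{a-1}}$ and the prefactor ${\bf q}^{\langle\mathbf{0}|\mathbf{a-1}\rangle}$ equals $1$. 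The product $\prod_i c_{\pi(i)}^{a_i-1}$ collapses to $\prod_i c_i^{a_i-1}$ under reindexing $i\mapsto\pi(i)$ (using $a_{\pi(i)}=a_i$), and the remaining double product is precisely $1/q_\pi$ by Remark \ref{rmkcommutator}. The standing hypothesis $q_\pi\prod_i c_i^{a_i-1}=1$ then delivers $g_{\mathbf{0},\mathbf{a-1}}=1$.

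I do not expect any real obstacle; the proof is a bookkeeping exercise with the combinatorial identities collected in Sections 2 and 3. The only delicate point is tracking how the indexing shifts when one passes through $\pi$ and $\pi^{-1}$, which is exactly what the $\pi^2=\Id$ hypothesis and the compatibility conditions $a_{\pi(i)}=a_i$, $q_{\pi(i)\pi(j)}=q_{ji}$ are designed to handle.
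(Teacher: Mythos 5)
Your derivation of (6.3) is exactly the paper's: substitute $\pi(\mathbf v)$ for $\mathbf v$ in (6.2) using $\pi^2=\Id$, reindex the $c_i$-product, and transform the double product by restricting to pairs with $\pi(j)<\pi(k)$, invoking Lemma \ref{propeqq}(2) and the compatibility $q_{\pi(j)\pi(k)}=q_{kj}$. This is precisely the displayed computation the paper places immediately before the lemma (its proof environment then only handles the two boundary values). Your treatment of $g_{\mathbf{a-1},\mathbf 0}=1$ also matches.

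The one place you diverge is $g_{\mathbf 0,\mathbf{a-1}}$, and there your step does not quite close as written. Evaluating (6.3) at $\mathbf v=\mathbf{a-1}$ gives
$$g_{\mathbf 0,\mathbf{a-1}}=\Bigl(\prod_{1\le i\le n}c_i^{a_i-1}\Bigr)\cdot q_\pi^{-1},$$
because the double product in (6.3) consists of the \emph{reciprocals} of the factors defining $q_\pi$. The hypothesis $q_\pi\prod_i c_i^{a_i-1}=1$ gives $\prod_i c_i^{a_i-1}=q_\pi^{-1}$, so your route yields $g_{\mathbf 0,\mathbf{a-1}}=q_\pi^{-2}$, not $1$ directly. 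The conclusion is still correct, but only because $q_\pi=\pm1$ (Lemma \ref{lemmaI}(4)), a fact you would need to invoke explicitly. The paper sidesteps this by evaluating the original formula (6.2) at $\mathbf v=\mathbf{a-1}$ (legitimate since $\pi(\mathbf{a-1})=\mathbf{a-1}$), where the double product is $q_\pi$ itself and the hypothesis applies verbatim. So: same main argument and correct conclusion, but this one substep needs either the extra citation of Lemma \ref{lemmaI}(4) or a switch back to (6.2).
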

\begin{proof} By (6.2) it is clear that $g_{\bf a-1,0}=1$. By (6.2) one has
    \[
    g_{\bf 0,a-1} = \frac{1}{{\bf q}^{\langle {\bf 0|a-1} \rangle}}\prod_{1\le i\le n} c_{i}^{a_i-1}\prod_{1\le j <k \le n} ({\bf q}^{\langle  \pi({\bf e}_k)|\pi({\bf e}_j) \rangle})^{(a_j-1)(a_k-1)}.
    \]
  By assumption $$\prod\limits_{1\le i\le n} c_{i}^{a_i-1}
    \prod\limits_{1\le j <k \le n} ({\bf q}^{\langle  \pi({\bf e}_k)|\pi({\bf e}_j) \rangle})^{(a_j-1)(a_k-1)} = q_\pi\prod\limits_{1\le i\le n} c_{i}^{a_i-1}=1$$ one gets $g_{\bf 0,a-1}=1$.
\end{proof}

\subsection{\bf Coalgebra structures on $A=  A({\bf q,a})$}

Define \ $\varepsilon: A\longrightarrow \mathbb{K}$ and $\Delta: A \longrightarrow A\otimes A$ to be the $\mathbb{K}$-linear maps as follows

\begin{equation}\label{eqcoalgonA}
\left\{
\begin{array}{l}
\varepsilon(x_\mathbf{v})=\delta_{\mathbf{v,0}}\ \ \text{for} \ \  \mathbf{v}\in V;\\

\Delta(1_A)= 1_A\otimes 1_A; \\

\Delta(x_\mathbf{v})= 1_A\otimes x_\mathbf{v} +x_\mathbf{v}\otimes 1_A \ \ \text{for} \ \  \mathbf{v}\in V-\{\bf 0, a-1\}; \\

\Delta(x_\mathbf{a-1}) = \sum\limits_{{\bf v}\in V}  g_{{\bf a-1-v},\pi({\bf v})} \ x_{\bf a-1-v}\otimes x_{\pi(\mathbf{v})}\\
\ \ \ \ \ \ \ \ \ \ \ \ = 1_A\otimes x_\mathbf{a-1} +x_\mathbf{a-1}\otimes 1_A + \sum\limits_{{\bf v}\in V-\{\bf 0, \ a-1\}}  g_{{\bf a-1-v},\pi({\bf v})} \ x_{\bf a-1-v}\otimes x_{\pi(\mathbf{v})}.$$
\end{array}
\right.
\end{equation}
where $g_{\bf a-1-v, \pi(v)}$ are given by (\ref{equationg}), see also Lemma \ref{g0a-1}.

\vskip5pt

Recall that for a coalgebra structure on $A$ with comultiplication \ $\Delta$, an element \ $x\in A$ is {\it primitive},  if \ $\Delta(x)=1_A\otimes x+ x\otimes 1_A$.
All the primitive elements of $A$ form a $\mathbb K$-linear space,  denoted by \ $P(A)$.

\begin{lemma}\label{lemcoalg}  The algebra $A =  A({\bf q,a})$ admits a coalgebra structure,
with $\varepsilon$ and $\Delta$ as in $(\ref{eqcoalgonA});$  and \ ${\rm dim} P(A)\ge (\prod\limits_{1\le i\le n}a_i) -2.$
\end{lemma}
\begin{proof} One directly verifies the coassociativity  \ $(\Delta \otimes {\rm Id})\Delta = ({\rm Id} \otimes \Delta)\Delta$ and the counit property \ $(\varepsilon\otimes {\rm Id})\Delta\cong {\rm Id} \cong ({\rm Id}\otimes\varepsilon)\Delta$. For example,
    \begin{align*}
    (& \Delta \otimes {\rm Id})\Delta (x_{\bf a-1})
    = 1_A\otimes 1_A\otimes x_{\bf a-1} + 1_A\otimes x_{\bf a-1} \otimes 1_A + x_{\bf a-1}\otimes 1_A \otimes 1_A \\
    &+\sum_{{\bf u, v}\in V -\{\bf 0, \ a-1\}} g_{{\bf a-1-v},\pi({\bf v})}
    (x_{\bf a-1-v}\otimes x_{\pi({\bf v})}\otimes 1_A + 1_A\otimes x_{\bf a-1-v}\otimes x_{\pi({\bf v})} + x_{\bf a-1-v}\otimes 1_A\otimes x_{\pi({\bf v})})\\
    =& ({\rm Id}\otimes \Delta)\Delta (x_{\bf a-1}).
    \end{align*}

By (6.3), all \ $x_{\bf v}$ with \ ${\bf v}\in V-\{\bf 0, a-1\}$ are $\mathbb K$-linear independent primitive elements of \ $A$. Hence
\ dim $P(A) \ge (\prod\limits_{1\le i\le n}a_i) -2.$ \end{proof}

\vskip5pt

\begin{rmk}\label{difference} The comultiplication \ $\Delta$ in {\rm (6.3)} is different from
the one given in {\rm [AS2, Example 1.9]:} there ${\rm dim} P(A)= n$.  Also, it generalizes one in {\rm [JZ, (3.1)]} where $\pi = \Id$.
\end{rmk}

\subsection{\bf Bi-Frobenius structures on $A=  A({\bf q,a})$} Fix the setting up in Subsection 6.1.
Choosing $\phi = x_{\bf a-1}^*$ and $t = x_{\bf a-1}$. Consider the $\mathbb K$-linear map $S: A\longrightarrow A$ given by

\begin{align}S(x_{\bf v}) & =\sum\phi(t_1x_{\bf v})t_2= g_{\bf a-1-v, \pi({\bf v})}{\bf q}^{\langle {\bf a-1-v|v}\rangle}x_{\pi({\bf v})} \nonumber
\\ \label{equationS}&= \prod_{1\le i\le n} c_{i}^{v_i}\prod_{1\le j <k \le n} ({\bf q}^{\langle  \pi({\bf e}_k)|\pi({\bf e}_j) \rangle})^{v_jv_k}x_{\bf \pi(v)}, \ \forall \ {\bf v}\in V.
\end{align}
In particular, \  $S(1_A) = 1_A;  \ \ S(x_{{\bf a-1}}) = g_{\bf 0, a-1}x_{{\bf a-1}} = x_{{\bf a-1}}$, and
$$S(x_i) = g_{{\bf a-1}-{\bf e}_i, \pi({\bf e}_i)}{\bf q}^{\langle {\bf a-1}-{\bf e}_i|{\bf e}_i \rangle}x_{\pi(i)}= c_ix_{\pi(i)}, \ 1\le i\le n.$$

\begin{theorem}\label{sufficiency} \ For $A =  A({\bf q,a})$, assume that
there is a compatible permutation $\pi\in S_n$ with $\pi^2 = \Id$, and that there are $c_i\in\mathbb K \ (1\le i\le n)$  such that
$$c_ic_{\pi(i)}=h_{{\bf e}_i},  \ \ \ \ \ \ \ q_\pi \prod\limits_{1\le i\le n} c_i^{a_i-1} =1$$
where $q_\pi = \prod\limits_{1\le j <k \le n} ({\bf q}^{\langle  \pi({\bf e}_k)|\pi({\bf e}_j) \rangle})^{(a_k-1)(a_j-1)}.$
Then   $(A, \ \phi= x_{\bf a-1}^*, \ t = x_{\bf a-1}, \ S)$ is a bi-Frobenius algebra,
where  the coalgebra structure is given by $(6.3)$ and $S$ is given by $(\ref{equationS})$. In particular, $S(x_i) = c_ix_{\pi(i)}, \ 1\le i\le n$.
\end{theorem}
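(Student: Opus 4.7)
The plan is to verify the defining axioms of Definition \ref{definitionbf} for the quadruple $(A, x_{\bf a-1}^*, x_{\bf a-1}, S)$ with $\Delta$ as in (6.4) and $S$ as in (\ref{equationS}). First I would record the easy facts. Since (6.4) is a special instance of the template (6.1), Lemma \ref{lemcoalg} already provides a coalgebra structure; $\varepsilon$ is an algebra homomorphism (the unique such map on the local algebra $A$) and $1_A$ is group-like, so axiom (i) holds. Axiom (ii) splits in two: $(A, x_{\bf a-1}^*)$ is a Frobenius algebra by Lemma \ref{frobeniushomomorphism}, and to see that $(A, x_{\bf a-1})$ is a Frobenius coalgebra I would compute $x_{\bf a-1} \leftharpoonup x_{\bf u}^*$ from (6.4) and observe that every coefficient $g_{\bf a-1-v,\pi(v)}$ is nonzero by inspection of (6.2), so, together with the bijectivity of $\pi$ on $V$, the map $A^* \to A$, $f \mapsto x_{\bf a-1} \leftharpoonup f$, sends the basis $\{x_{\bf u}^*\}$ to a basis of $A$.

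Next I would check that the $S$ of (\ref{equationS}) actually equals the canonical antipode $a \mapsto \sum \phi(t_1 a) t_2$: substitute (6.4), use $\phi(x_{\bf w}) = \delta_{{\bf w},{\bf a-1}}$ and Lemma \ref{propeqq}(1) to collapse the sum. The algebra anti-homomorphism portion of axiom (iii) then follows by a universal property argument: define $S_1 \colon A \to A^{\mathrm{op}}$ on generators by $x_i \mapsto c_i x_{\pi(i)}$; well-definedness of this map requires exactly $q_{\pi(i)\pi(j)} = q_{ji}$ and $a_{\pi(i)} = a_i$, which hold by compatibility of $\pi$. Expanding $S_1(x_{\bf v})$ and reordering via Lemma \ref{commutator}(2) reproduces formula (\ref{equationS}), so $S = S_1$ as a map on $A$.

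The main obstacle is the coalgebra anti-homomorphism part, $\Delta \circ S = (S \otimes S) \circ \tau \circ \Delta$. The identity $\varepsilon \circ S = \varepsilon$ is immediate, and for $a = 1_A$ or $a = x_{\bf v}$ with ${\bf v} \in V - \{{\bf 0}, {\bf a-1}\}$ both sides collapse to $c_{\bf v}(1_A \otimes x_{\pi({\bf v})} + x_{\pi({\bf v})} \otimes 1_A)$, where $c_{\bf v}$ denotes the scalar with $S(x_{\bf v}) = c_{\bf v} x_{\pi({\bf v})}$. The delicate case is $a = x_{\bf a-1}$; since $S(x_{\bf a-1}) = x_{\bf a-1}$ by Lemma \ref{g0a-1}, applying $\Delta$ to the left side and $(S \otimes S) \circ \tau$ to (6.4) on the right, then comparing coefficients of $x_{\bf a-1-u} \otimes x_{\pi({\bf u})}$ after the reindexing ${\bf u} \mapsto {\bf a-1-v}$, reduces the statement to the single scalar identity
\[
g_{\bf a-1-v, \pi(v)} \cdot c_{\pi({\bf v})} \cdot c_{\bf a-1-v} \;=\; g_{\bf v,\, a-1-\pi(v)}, \qquad \forall \ {\bf v} \in V.
\]
This is the technical core. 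To verify it I would expand the right-hand side by substituting ${\bf a-1-v}$ in place of ${\bf v}$ in (6.2), and the left-hand side by substituting (6.2), (\ref{equationS}), and (6.3); after cancellation the equation separates into three packages that are handled, respectively, by the hypothesis $c_i c_{\pi(i)} = h_{{\bf e}_i}$ together with Lemma \ref{propeqq}(5), by the symmetry Lemma \ref{propusefuleq2}(2)--(3) among the structure coefficients ${\bf q}^{\langle \pi({\bf e}_k)|\pi({\bf e}_j)\rangle}$, and by the top relation $q_\pi \prod c_i^{a_i-1} = 1$, which is exactly what is needed to absorb the contribution of exponents that saturate at $a_i - 1$. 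The fact that all three assumptions of the theorem enter precisely in this one identity is what makes them both necessary and sufficient.
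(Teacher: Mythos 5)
Your proposal is correct and follows the same skeleton as the paper's proof: coalgebra axioms via Lemma \ref{lemcoalg}, Frobenius coalgebra by computing $t\leftharpoonup x_{\bf a-1-v}^*$ and using that all $g_{\bf a-1-v,\pi(v)}\ne 0$, and reduction of $\Delta\circ S=(S\otimes S)\circ\tau\circ\Delta$ on $x_{\bf a-1}$ to the same scalar identity (your formulation $g_{\bf a-1-v,\pi(v)}\,c_{\pi(\bf v)}\,c_{\bf a-1-v}=g_{\bf v,\,a-1-\pi(v)}$ is, after substituting $c_{\bf w}=g_{\bf a-1-w,\pi(w)}{\bf q}^{\langle\bf a-1-w|w\rangle}$, exactly the paper's $g_{\bf a-1-v,\pi(v)}g_{\bf a-1-\pi(v),v}{\bf q}^{\langle\bf a-1-\pi(v)|\pi(v)\rangle}{\bf q}^{\langle\bf v|a-1-v\rangle}=1$). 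The one genuinely different local step is your treatment of anti-multiplicativity: you define $S_1\colon A\to A^{\rm op}$ on generators, check well-definedness against the relations (which is precisely where $a_{\pi(i)}=a_i$ and $q_{\pi(i)\pi(j)}=q_{ji}$ enter), and identify $S_1$ with $S$ via Lemma \ref{commutator}(2); the paper instead verifies $S(x_{\bf u}x_{\bf v})=S(x_{\bf v})S(x_{\bf u})$ by a direct computation resting on Lemma \ref{propusefuleq2}(3). Your route is cleaner and localizes the use of compatibility, at the cost of the (routine) check that the relations are preserved. One imprecision worth flagging: the hypothesis $q_\pi\prod_i c_i^{a_i-1}=1$ does not enter the scalar identity for general $\bf v$ --- that identity follows from $c_ic_{\pi(i)}=h_{{\bf e}_i}$ and Lemma \ref{propeqq}(5) alone, since the ${\bf q}^{\langle\pi({\bf e}_k)|\pi({\bf e}_j)\rangle}$-packages from (\ref{equationg}) and (\ref{equationpig}) cancel outright. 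Its actual role is to force $g_{\bf 0,a-1}=1$ (Lemma \ref{g0a-1}), which is what makes (6.4) a legitimate instance of (6.1) (counit axiom) and gives $S(x_{\bf a-1})=x_{\bf a-1}$; you do invoke Lemma \ref{g0a-1} at the right moment, so this is a misattribution in the commentary rather than a gap in the argument.
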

\begin{proof} By Lemma \ref{lemcoalg}, $\varepsilon$ and $\Delta$ in (\ref{eqcoalgonA}) give a coalgebra structure on $A$. It remains to  check that the conditions (i), (ii) and (iii) in Definition \ref{definitionbf}
are satisfied. By definition $\varepsilon: A\longrightarrow \mathbb{K}$ is an algebra map, and  $1_A$ is a group-like element. By Lemma \ref{frobeniushomomorphism}, $(A, \ \phi)$ is a Frobenius algebra.

\vskip5pt

By (6.3) one has
\begin{align*}
    t\leftharpoonup x_{\bf a-1-v}^* & = \sum x_{\bf a-1-v}^*(t_1)t_2
    = \begin{cases} 1, \ & \mbox{if} \ \bf v = \bf 0; \\ x_{\bf a-1}, \ & \mbox{if} \ \bf v = {\bf a-1}; \\  g_{\bf a-1-v,\pi(v)}x_{\pi({\bf v})}, \ & \mbox{if} \ {\bf v}\in V-\{\bf 0, a-1\}\end{cases}.
\\ & = g_{\bf a-1-v, \pi(v)}x_{\pi({\bf v})}, \ \forall \ {\bf v}\in V.\end{align*}
    Since $\pi$ induces a permutation of $V$ and $g_{\bf a-1-v, \pi(v)}\ne 0$ for  ${\bf v}\in V$, one has $A = t\leftharpoonup A^*$, i.e., $(A, \ x_{\bf a-1})$ is a Frobenius coalgebra.

\vskip5pt

To show $S$ is an algebra anti-homomorphism of $A$, it suffices to show \ $S(x_{\bf u}x_{\bf v}) = S(x_{\bf v})S(x_{\bf u})$ for \ ${\bf u}, {\bf v}\in V$. By (\ref{equationS}) one has
\begin{align*}& S(x_\mathbf{v}) = \prod_{1\le i\le n}c_{i}^{v_i}
    \prod_{1\le j <k \le n} ({\bf q}^{\langle  \pi({\bf e}_k)|\pi({\bf e}_j) \rangle})^{v_jv_k}x_{\bf \pi(v)};\\
    & S(x_\mathbf{v})S(x_\mathbf{u})
    = {\bf q}^{\langle {\bf \pi(v)|\pi(u)}\rangle}
    \prod_{1\le i\le n} c_{i}^{u_i+v_i}
    \prod_{1\le j <k \le n} ({\bf q}^{\langle  \pi({\bf e}_k)|\pi({\bf e}_j) \rangle})^{v_jv_k+u_ju_k} x_{\pi({\bf u+v})};
    \\  & S(x_\mathbf{u}x_\mathbf{v})= {\bf q}^{\langle \mathbf{u|v}\rangle}S(x_{\bf u+v})\\
    & \ \ \ \ \ \ \ \ \ \ \ = {\bf q}^{\langle \mathbf{u|v}\rangle}
    \prod_{1\le i\le n} c_{i}^{u_i+v_i}
    \prod_{1\le j <k \le n} ({\bf q}^{\langle  \pi({\bf e}_k)|\pi({\bf e}_j) \rangle})^{(u_j+v_j)(u_k+v_k)} x_{\pi({\bf u + v})}. \end{align*}
By Lemma \ref{propusefuleq2}(3) one has
\ ${\bf q}^{\langle {\bf \pi(v)|\pi(u)}\rangle} ={\bf q}^{\langle{\bf u|v}\rangle} \prod\limits_{1\le j<k \le n} ({\bf q}^{\langle  \pi({\bf e}_k)|\pi({\bf e}_j) \rangle})^{u_jv_k+v_ju_k}.$
It follows that

    \begin{align*} S(x_\mathbf{u}x_\mathbf{v}) & = {\bf q}^{\langle \mathbf{u|v}\rangle}
    \prod_{1\le i\le n} c_{i}^{u_i+v_i}
    \prod_{1\le j <k \le n} ({\bf q}^{\langle  \pi({\bf e}_k)|\pi({\bf e}_j) \rangle})^{(u_j+v_j)(u_k+v_k)} x_{\pi({\bf u + v})}
    \\ & = {\bf q}^{\langle \mathbf{u|v}\rangle}
    \prod_{1\le i\le n} c_{i}^{u_i+v_i}
    \prod_{1\le j <k \le n} ({\bf q}^{\langle  \pi({\bf e}_k)|\pi({\bf e}_j) \rangle})^{v_jv_k +u_ju_k} ({\bf q}^{\langle  \pi({\bf e}_k)|\pi({\bf e}_j) \rangle})^{u_jv_k+v_ju_k} x_{\pi({\bf u + v})}
   \\ & = \frac{{\bf q}^{\langle \mathbf{u|v}\rangle}{\bf q}^{\langle{\bf \pi(v)|\pi(u)}\rangle}}{{\bf q}^{\langle{\bf u|v}\rangle}}
  \prod_{1\le i\le n} c_{i}^{u_i+v_i} \prod_{1\le j <k \le n} ({\bf q}^{\langle  \pi({\bf e}_k)|\pi({\bf e}_j) \rangle})^{v_jv_k +u_ju_k} x_{\pi({\bf u + v})}
      \\ &  = {\bf q}^{\langle {\bf \pi(v)|\pi(u)}\rangle}
    \prod_{1\le i\le n} c_{i}^{u_i+v_i}
    \prod_{1\le j <k \le n} ({\bf q}^{\langle  \pi({\bf e}_k)|\pi({\bf e}_j) \rangle})^{v_jv_k+u_ju_k} x_{\pi({\bf u+v})}
    \\ & = S(x_\mathbf{v})S(x_\mathbf{u}).
    \end{align*}

Now we verify that $S$ is a coalgebra anti-homomorphism of $A$, i.e., $\varepsilon\circ S=\varepsilon$ and $(S\otimes S)\circ\tau\circ\Delta = \Delta\circ S.$
Since $|x_{\pi({\bf v})}| = |x_{\bf v}|, \ \forall \ {\bf v}\in V$, one has
$$\varepsilon\circ S(x_{\bf v})= \delta_{\mathbf{v,0}} = \varepsilon(x_{\bf v}), \ \forall \ {\bf v}\in V$$
i.e., $\varepsilon\circ S=\varepsilon$. Also, one has $(S\otimes S)\circ\tau\circ\Delta(1)= 1\otimes 1 = (\Delta\circ S)(1).$ For ${\bf v}\in V- \{\bf 0,a-1\}$,  one has
$\pi({\bf v})\in V- \{\bf 0,a-1\}$ and
\begin{align*} &(S\otimes S)\circ\tau\circ\Delta(x_{\bf v})= 1\otimes S(x_{\bf v})+S(x_{\bf v})\otimes 1\\
= &   g_{\bf a-1-v, \pi({\bf v})}{\bf q}^{\langle {\bf a-1-v|v}\rangle}(1\otimes x_{\bf \pi(v)} + x_{\bf \pi(v)}\otimes 1) =g_{\bf a-1-v, \pi({\bf v})}{\bf q}^{\langle {\bf a-1-v|v}\rangle}\Delta(x_{\bf\pi(v)}) \\
    =&   (\Delta\circ S)(x_{\bf v}).
\end{align*}
Note that $(\Delta\circ S)(x_{\bf a-1}) = \Delta(x_{\bf a-1})= \sum\limits_{{\bf v}\in V}  g_{{\bf a-1-v},\pi({\bf v})} \ x_{\bf a-1-v}\otimes x_{\pi(\mathbf{v})}.$ Since $\pi^2 = \Id$, one has
\begin{align*}
    &(S\otimes S)\circ\tau\circ\Delta(x_{\bf a-1})= \sum_{{\bf v}\in V }  g_{\bf a-1-v,\pi(v)}S(x_{\bf \pi(v)})\otimes S(x_{\bf a-1-v})\\
    =& \sum_{{\bf v}\in V } g_{\bf a-1-v,\pi(v)}g_{\bf a-1-\pi(v), v}g_{\bf v,a-1-\pi(v)} {\bf q}^{\langle\bf a-1-\pi(v)|\pi(v)\rangle}{\bf q}^{\langle\bf v|a-1-v\rangle} x_{\bf v}\otimes x_{\bf a-1-\pi(v)}.
\end{align*}
For each ${\bf v}\in V$, comparing the coefficients of $x_{\bf v}\otimes x_{\bf a-1-\pi(v)}$, it suffices  to verify
$$g_{\bf a-1-v,\pi(v)}g_{\bf a-1-\pi(v), v}g_{\bf v,a-1-\pi(v)} {\bf q}^{\langle\bf a-1-\pi(v)|\pi(v)\rangle}{\bf q}^{\langle\bf v|a-1-v\rangle} = g_{{\bf v}, {\bf a-1-\pi(v)}}, \ \forall \ {\bf v}\in V$$
or, equivalently \[
    g_{\bf a-1-v,\pi(v)}g_{\bf a-1-\pi(v), v}{\bf q}^{\langle\bf a-1-\pi(v)|\pi(v)\rangle}{\bf q}^{\langle\bf v|a-1-v\rangle}=1, \ \forall  \ {\bf v}\in V.
\]
By construction (\ref{equationg}) and (\ref{equationpig}), one has
\begin{align*}
    g_{\bf a-1-v,\pi(v)} =\frac{1}{{\bf q}^{\langle {\bf a-1-v|v} \rangle}}\prod_{1\le i\le n} c_{i}^{v_i}\prod_{1\le j <k \le n} ({\bf q}^{\langle  \pi({\bf e}_k)|\pi({\bf e}_j) \rangle})^{v_jv_k}, \ \forall \ {\bf v}\in V
\end{align*}
and $$g_{\bf a-1-\pi(v), v} = \frac{1}{{\bf q}^{\langle {\bf a-1-\pi(v)|\pi(v)} \rangle}}\prod_{1\le i\le n} c_{\pi(i)}^{v_i}\prod_{1\le j<k \le n}(\frac{1}{{\bf q}^{\langle\pi({\bf e}_k)|\pi({\bf e}_j) \rangle}})^{v_{j}v_{k}}, \ \forall \ {\bf v}\in V.$$
Thus, by definition $h_{\bf v} =\frac{{\bf q}^{\bf \langle a-1-v|v\rangle}}{{\bf q}^{\bf \langle v|a-1-v\rangle}},  \ \forall \ {\bf v}\in V,$ one has
\begin{align*}
   & g_{\bf a-1-v,\pi(v)}g_{\bf a-1-\pi(v), v}{\bf q}^{\langle\bf a-1-\pi(v)|\pi(v)\rangle}{\bf q}^{\langle\bf v|a-1-v\rangle}
   \\ &=
    \frac{{\bf q}^{\langle\bf v|a-1-v\rangle}}
    {{\bf q}^{\langle {\bf a-1-v|v} \rangle}}
    \prod_{1\le i\le n}(c_ic_{\pi(i)})^{v_i} =  \frac{1}{h_{\bf v}}\prod_{1\le i\le n}(c_ic_{\pi(i)})^{v_i}, \ \forall \ {\bf v}\in V.\end{align*}
By Lemma \ref{propeqq}(5), $h_{\bf v} = \prod\limits_{1\le i\le n}{h_{{\bf e}_i}}^{v_i}$ for ${\bf v}\in V$. By assumption $c_ic_{\pi(i)}=h_{{\bf e}_i}, \ 1\le i\le n$. Hence
$$
g_{\bf a-1-v,\pi(v)}g_{\bf a-1-\pi(v), v}{\bf q}^{\langle\bf a-1-\pi(v)|\pi(v)\rangle}{\bf q}^{\langle\bf v|a-1-v\rangle} =
\prod\limits_{1\le i\le n}(\frac{c_ic_{\pi(i)}} {h_{{\bf e}_i}})^{v_i} =1, \ \forall \ {\bf v}\in V.
$$
This completes the proof of $(S\otimes S)\circ\tau\circ\Delta (x_{\bf a-1}) = (\Delta\circ S) (x_{\bf a-1})$, and hence $S$ is a coalgebra anti-homomorphism of $A$.
\vskip5pt

We have proved that  $(A, \ \phi= x_{\bf a-1}^*, \ t = x_{\bf a-1}, \ S)$ is a bi-Frobenius algebra structure  with coalgebra structure as in $(6.3)$ and $S$ as in $(\ref{equationS})$.
By the construction  in (\ref{equationS}), $S$ is a permutation antipode. This completes the proof. \end{proof}

\noindent {\bf Proof of Theorem \ref{mainthm}:} Now,  Theorem \ref{mainthm} follows from  Theorems \ref{thmperS} and \ref{sufficiency}.   \hfill $\Box$

\subsection{\bf Symmetric case}

\begin{corollary} \label{symmetriccase} \ Let $A=A({\bf q,a})$ be a symmetric algebra. Then $A$ admits a bi-Frobenius algebra structure with permutation antipode
if and only if there is a compatible permutation $\pi\in S_n$ with $\pi^2 = \Id$.
\vskip5pt

If this is the case, then  $(A, \ x_{\bf a-1}^*, \ x_{\bf a-1}, \ S)$ is a bi-Frobenius algebra,
where  the comultiplication $\Delta$ is given by $(6.3)$,  $S$ is given by $(\ref{equationS})$, and $S(x_i) = c_ix_{\pi(i)}$ for some $c_i\in\mathbb K, \ 1\le i\le n$. \end{corollary}

\begin{proof} \ By Theorem \ref{mainthm}, it suffices  to show the existence of $\{c_i\}_{1\le i\le n}\subset\mathbb{K}-\{0\}$ satisfying the conditions
$$c_ic_{\pi(i)}=h_{{\bf e}_i}, \ 1\le i\le n; \ \ \ \ \ \ q_\pi\prod\limits_{1\le i\le n} c_i^{a_i-1} =1.$$
Since $A$ is a symmetric algebra, each $h_{{\bf e}_i} = 1$ (cf. Lemma \ref{frobeniushomomorphism}).  By Lemma \ref{lemmaI}(4), these conditions can be rewritten as
   \[
    c_ic_{\pi(i)}=1, \ \forall \ 1\le i\le n; \ \ \ \ \
    \prod\limits_{1\le i\le n} c_i^{a_i-1}\prod_{\substack{1\le j <k \le n,\\ j,\ k\  \in\ I.}}  (q_{kj})^{(a_k-1)(a_j-1)}=1
   \]
where $I$ is the same as in Lemma \ref{lemmaI}, i.e., $I=\{i \ | \ 1\le i\le n,  \ \pi(i)=i \}$. Also, recall that $J =\{i \ | \ 1\le i\le n, \ \pi(i)\ne i \}$.
We choose $c_i$ as follows:
$$c_i = \begin{cases}\prod\limits_{j\ge i, j \in I}  (q_{ij})^{(a_j-1)}, &  \mbox{if} \ i\in I;\\ 1, & \mbox{if} \ i\in J. \end{cases}$$

If $i\in J$, then $\pi(i)\in J$, and hence $c_ic_{\pi(i)}=1$ for $i\in J$.

\vskip5pt

If $i\in I$, then $\pi(i) = i$ and
$$c_ic_{\pi(i)} = c_i^2 = \prod\limits_{j \ge i,\ j \in I}  (q_{ij})^{2(a_j-1)}=\prod\limits_{j \ge i,\ j \in I}  (q_{ij}q_{\pi(i)\pi(j)})^{(a_j-1)} = \prod\limits_{j \ge i,\ j \in I}  (q_{ij}q_{ji})^{(a_j-1)} = 1.$$

\vskip5pt

On the other hand, one has
    \begin{align*} &\prod\limits_{1\le i\le n} c_i^{a_i-1}\prod_{\substack{1\le j <k \le n,\\ j,\ k\  \in\ I}}  q_{kj}^{(a_k-1)(a_j-1)}
    = \prod\limits_{i\in I} c_i^{a_i-1}\prod_{\substack{1\le j <k \le n,\\ j,\ k\  \in\ I}}  q_{kj}^{(a_k-1)(a_j-1)}
    \\& = \prod\limits_{i\in I}\prod\limits_{j\ge i, j \in I}  q_{ij}^{(a_i-1)(a_j-1)} \prod_{\substack{1\le j <k \le n,\\ j,\ k\  \in\ I}}  q_{kj}^{(a_k-1)(a_j-1)}
    \\& = \prod_{\substack{1\le k\le  j \le n\\ j,\ k\  \in\ I}}  q_{kj}^{(a_k-1)(a_j-1)}
        \prod_{\substack{1\le j <k \le n\\ j,\ k\  \in\ I}}  q_{kj}^{(a_k-1)(a_j-1)}
     =    \prod_{ j, k \in I} q_{kj}^{(a_k-1)(a_j-1)}\\
        = &
        \prod_{ k \in I}\prod_{ j \in I} q_{kj}^{(a_k-1)(a_j-1)}
        \xlongequal{{\rm Lem.}\ \ref{lemmaI}(3)}
        \prod_{ k \in I} h_{{\bf e}_k}^{a_k-1}=1.
    \end{align*}
This completes the proof.
\end{proof}

\subsection{\bf The case of ${\rm Char}\mathbb K = 2$}

\begin{corollary} \label{char2} \  Assume that ${\rm Char}\mathbb{K}=2$. Then $A = A({\bf q,a })$ admits a bi-Frobenius algebra structure with permutation antipode
if and only if $\mathcal{ N}^2=\Id$ and there is  compatible permutation $\pi\in S_n$ with  $\pi^2 = \Id$.

\vskip5pt

If this is the case, then $A$ is a symmetric algebra and $(A, \ x_{\bf a-1}^*, \ x_{\bf a-1}, \ S)$ is a bi-Frobenius algebra with  permutation antipode,
where the comultiplication $\Delta$ is given by $(6.3)$, $S$ is given by $(\ref{equationS})$, and $S(x_i) = x_{\pi(i)}, \ 1\le i\le n$.
 \end{corollary}
\begin{proof} \ If $A$ admits a bi-Frobenius algebra with  permutation antipode, then $\mathcal{N}^2=\Id$ by Proposition \ref{thenecessity}(iv); and
there is a compatible permutation $\pi\in S_n$ with  $\pi^2 = \Id$, by Theorem \ref{mainthm}.

\vskip5pt

Conversely, assume that  $\mathcal{ N}^2=\Id$ and there is a compatible  permutation $\pi\in S_n$ with  $\pi^2 = \Id$.
Then $h_{{\bf e}_i}^2=1, \ 1\le i\le n$ (cf. Lemma \ref{frobeniushomomorphism}). Since ${\rm Char}\mathbb{K}=2$, one gets
$h_{{\bf e}_i}=1, \ 1\le i\le n$. Thus $A$ is a symmetric algebra.

\vskip5pt

By Lemma \ref{lemmaI}(4), $q_\pi =\prod\limits_{1\le j <k \le n} ({\bf q}^{\langle  \pi({\bf e}_k)|\pi({\bf e}_j) \rangle})^{(a_k-1)(a_j-1)}= \pm 1 \xlongequal{{\rm Char}\mathbb{K}=2}1$.
Take $c_i = 1$,  $1\le i\le n$. Then
    $$c_ic_{\pi(i)}=1= h_{{\bf e}_i}, \ 1\le i\le n, \ \ \ \ \
    q_\pi \prod\limits_{1\le i\le n} c_i^{a_i-1}=1.$$
    Thus, by Theorem \ref{mainthm},  $(A, \ x_{\bf a-1}^*, \ x_{\bf a-1}, \ S)$ is a bi-Frobenius algebra with  permutation antipode, where the comultiplication $\Delta$ is given by $(6.3)$, $S$ is given by $(\ref{equationS})$, and $S(x_i) = x_{\pi(i)}, \ 1\le i\le n$.
    This completes the proof.
\end{proof}

\subsection{Examples} We show by examples that the conditions in Theorem \ref{mainthm} (or Theorem \ref{sufficiency}) can be realized for non-commutative algebras (i.e., not all $q_{ij}$ are $1$).
\begin{examples} \ Let $A= A({\bf q, a})$ with ${\bf a}= (2,2,2)$ and ${\bf q} = \left(\begin{smallmatrix}1 &b &\frac{1}{b}\\ \frac{1}{b} &1&b\\ b&\frac{1}{b}&1 \end{smallmatrix}\right) $, where \ $0\ne b\in \mathbb{K}.$
Thus $A = \mathbb{K}\langle x_1, x_2, x_3 \rangle/\langle x_1^{2}, \ x_2^{2}, \ x_3^{2}, \ x_2x_1-bx_1x_2, \ x_3x_1-\frac{1}{b}x_1x_3, \ x_3x_2- bx_2x_3\rangle.$ Then
$$V=\{(0,0,0), \ (1, 0, 0), \ (0,1,0), \ (0,0,1), \ (1,1,0), \ (1,0,1), \ (0,1,1), \ (1,1,1)\}$$
and $h_{{\bf e}_1}=h_{{\bf e}_2}=h_{{\bf e}_3}=1$. Thus $A$ is symmetric $(${\rm cf. Lemma \ref{frobeniushomomorphism}} $).$ Taking $\pi=(2,3)\in S_3$,  then  $q_{\pi(i)\pi(j)}=q_{ji}$ for $1\le i, j\le 3$.
By {\rm Theorem \ref{symmetriccase}},  $A$ has a bi-Frobenius algebra structure with permutation antipode.

\vskip5pt

In fact, as in the proof of {\rm Theorem \ref{symmetriccase}},
we take $c_1 = c_2 = c_3 =1$. Consider  the $\mathbb{K}$-linear maps \ $\varepsilon: A\longrightarrow \mathbb{K}$, \  $\Delta: A \longrightarrow A\otimes A$ and
\ $S: A\rightarrow A$, as in {\rm (6.3)} and {\rm (6.4):}
\begin{equation*}
\left\{\begin{aligned}&\varepsilon(x_\mathbf{v})=\delta_{\mathbf{v,0}}\ \ \text{for all} \ \  \mathbf{v}\in V; \\ & \Delta(1)= 1\otimes 1;
\\ &\Delta(x_\mathbf{v})= 1\otimes x_\mathbf{v} +x_\mathbf{v}\otimes 1, \ \ \forall \ \  \mathbf{v}\in V-\{(0,0,0),(1,1,1)\}, \\
&\Delta(x_1x_2x_3) = 1\otimes x_{1}x_2x_3 + x_{1}x_2x_3\otimes 1\\
&\hskip47pt + x_2x_3\otimes x_1 + \frac{1}{b}x_1x_3\otimes x_3 + x_1x_2\otimes x_2\\
&\hskip47pt + \frac{1}{b} x_3\otimes x_1x_3 + x_2\otimes x_1x_2 + x_1\otimes x_2x_3;\\
& S(1)=1; \quad S(x_1) = x_1; \quad S(x_2) = x_3; \quad S(x_3) = x_2;\\
&S(x_1x_2)=\frac{1}{b}x_1x_3; \quad S(x_1x_3) = bx_1x_2; \quad S(x_2x_3) = x_2x_3;\\
&S(x_1x_2x_3)=x_1x_2x_3.
\end{aligned}\right.\end{equation*}
Then $(A, \ (x_1x_2x_3)^{*}, \ x_1x_2x_3, \ S)$ is a bi-Frobenius algebra with permutation antipode $($one can also verify this by definition$)$.
\end{examples}

\begin{examples} \label{exmnonsymmtric} \ Let $A= A({\bf q, a})$ with ${\bf a}= (2,2,2)$ and ${\bf q} = \left(\begin{smallmatrix} 1 &b &\frac{1}{b}\\ \frac{1}{b} &1&-b\\ b&-\frac{1}{b} &1 \end{smallmatrix}\right)$, where \ $0\ne b\in \mathbb{K}$,
    i.e., $A = \mathbb{K}\langle x_1, \ x_2, \ x_3\rangle/\langle x_1^{2}, \ x_2^{2}, \ x_3^{2}, \ x_2x_1-bx_1x_2, \ x_3x_1-\frac{1}{b}x_1x_3, \ x_3x_2+bx_2x_3\rangle$.
    Then \ $V=\{(0,0,0),(1,0,0),(0,1,0),(0,0,1),(1,1,0),(1,0,1),(0,1,1),(1,1,1)\}$ and $h_{{\bf e}_1}=1$, $h_{{\bf e}_2}=h_{{\bf e}_3}=-1$. Thus $A$ is not symmetric $($ {\rm cf. Lemma \ref{frobeniushomomorphism}} $).$ Taking $\pi=(2,3)\in S_3$, $c_1=-1$, $c_2=c_3=\sqrt{-1}$. Then  $q_{\pi(i)\pi(j)}=q_{ji}$ for $1\le i,j\le 3,$ and one has
    $$c_1c_{\pi(1)}=c_1^2=h_{{\bf e}_1}=1, \ \ \ c_2c_{\pi(2)}=c_3c_{\pi(3)}= c_2c_3 =h_{{\bf e}_2}=h_{{\bf e}_3} =-1$$
    and
    \begin{align*}
      \prod_{1\le j <k \le n} ({\bf q}^{\langle  \pi({\bf e}_k)|\pi({\bf e}_j) \rangle})^{(a_k-1)(a_j-1)}
      &=
       {\bf q}^{\langle  \pi({\bf e}_2)|\pi({\bf e}_1) \rangle}{\bf q}^{\langle  \pi({\bf e}_3)|\pi({\bf e}_1) \rangle}{\bf q}^{\langle  \pi({\bf e}_3)|\pi({\bf e}_2) \rangle} \\
       &=
       {\bf q}^{\langle  {\bf e}_3|{\bf e}_1 \rangle}{\bf q}^{\langle  {\bf e}_2|{\bf e}_1 \rangle}{\bf q}^{\langle  {\bf e}_2|{\bf e}_3 \rangle}=\frac{1}{b}\cdot b\cdot 1 = 1.
    \end{align*}
    Thus all the conditions in {\rm Theorem \ref{mainthm}} are satisfied. Consider $\mathbb{K}$-linear maps $\varepsilon: A\rightarrow \mathbb{K}$, $\Delta: A\rightarrow A\otimes A$ and $S:A\rightarrow A$
    as in {\rm (6.3)} and {\rm (6.4):}

    \begin{equation*}
    \left\{\begin{aligned}
    &\varepsilon(x_\mathbf{v})=\delta_{\mathbf{v,0}}\ \ \text{for all} \ \  \mathbf{v}\in V;\\
    &\Delta(1)= 1\otimes 1; \\
    &\Delta(x_\mathbf{v})= 1\otimes x_\mathbf{v} +x_\mathbf{v}\otimes 1, \ \ \forall \ \  \mathbf{v}\in V-\{(0,0,0),(1,1,1)\}; \\
    &\Delta(x_1x_2x_3) = 1\otimes x_{1}x_2x_3 + x_{1}x_2x_3\otimes 1\\
    &\hskip47pt - x_2x_3\otimes x_1 - \frac{\sqrt{-1}}{b}x_1x_3\otimes x_3 + \sqrt{-1} x_1x_2\otimes x_2\\
    &\hskip47pt + \frac{\sqrt{-1}}{b}x_3\otimes x_1x_3 - \sqrt{-1} x_2\otimes x_1x_2 - x_1\otimes x_2x_3;\\
    &S(1)=1; \quad S(x_1) = -x_1; \quad S(x_2) = \sqrt{-1}x_3; \quad S(x_3) = \sqrt{-1}x_2;\\
    &S(x_1x_2)=-\frac{\sqrt{-1}}{b}x_1x_3; \quad S(x_1x_3) = -b\sqrt{-1}x_1x_2; \quad S(x_2x_3) = -x_2x_3;\\
    &S(x_1x_2x_3)=x_1x_2x_3.
    \end{aligned}\right.\end{equation*}
    \vskip5pt \noindent By {\rm Theorem \ref{mainthm}}, $(A, \ (x_1x_2x_3)^{*}, \ x_1x_2x_3, \ S)$ is a bi-Frobenius algebra with permutation antipode $($one can also directly verify this by definition$)$.

    \vskip5pt

    Let $z=1+x_1\in U(A)$, and $\phi =z\rightharpoonup (x_1x_2x_3)^*$. Then $z^{-1}=1-x_1$, $\mathcal{N}(z)= \mathcal{N}(1) + \mathcal{N}(x_1) = 1 + h_{{\bf e}_1}x_1 = z$ and $\mathcal{N}(z^{-1})=z^{-1}$. By {\rm Lemma \ref{frobeniushomomorphism}} and $\mathcal N^2 = \Id$ one has
    \begin{align*}
        \mathcal{N}_{\phi}^2(x_2) & =\mathcal{N}_{\phi}(z\mathcal{N}(x_2)z^{-1}) = z\mathcal{N}(z\mathcal{N}(x_2)z^{-1})z^{-1}\\
        & = z\mathcal{N}(z)x_2\mathcal{N}(z^{-1})z^{-1} = z^2x_2z^{-2} = (1+2x_1)x_2(1-2x_1)\\
        & = x_2 + 2(1-b)x_1x_2.
    \end{align*}
    Thus, if Char $\mathbb K\ne 2$ and $b\ne 1$, then $\mathcal{N}_{\phi}^2\ne \Id$. This is in contrast to the fact $\mathcal N^2 = \Id.$
    \end{examples}

\section{\bf Intrinsic descriptions for  $A({\bf q,a})$ with permutation antipode}
Theorem \ref{mainthm} gives a sufficient and necessary condition for quantum complete intersection $A=A({\bf q,a})$ admitting a bi-Frobenius algebra structure with permutation antipode.
This condition involves the existence of $c_i\in\mathbb K$, satisfying some equalities. The aim of this section is to
replace the existence of $c_i$ by some intrinsic conditions which only involve the properties of $\bf q$ and $\bf a$.

\vskip5pt

Since the situation of Char $\mathbb K =2$ has been treated in Corollary \ref{char2}, we will assume Char $\mathbb K\ne 2$.

\subsection{Partitions of $\{1, \cdots, n\}$} Assume that $\mathcal N^2 = \Id$ (or equivalently, $h_{{\bf e}_i}^2=1, \ 1\le i\le n$), where $\mathcal N$ is the canonical Nakayama automorphism of $A$, and that
there is a compatible permutation $\pi\in S_n$ with $\pi^2 = \Id$. Thus $a_{\pi(i)}=a_i$ and $q_{\pi(i)\pi(j)}=q_{ji}$ for $1\le i,j\le n$.

\vskip5pt

Since $h_{{\bf e}_i}^2=1$ for $1\le i\le n$, we put

\begin{align*}I=\{i \ | \ 1\le i\le n,  \ \pi(i)=i \},  \ \ \   J =\{i \ | \ 1\le i\le n, \ \pi(i)\ne i \},\end{align*}
\begin{equation}\label{equationIJ}
\begin{aligned}I_1 &= \{i\in I \ |\ h_{{\bf e}_i}=1, \ a_i\  \text{ is \  even}\}, & J_1 &= \{i\in J \ |\ h_{{\bf e}_i}=1,\ a_i \ \text{ is \  even}\},\\
    I_2 &= \{i\in I \ |\ h_{{\bf e}_i}=1,\ a_i\  \text{ is \  odd}\}, & J_2 &= \{i\in J \ |\ h_{{\bf e}_i}=1,\ a_i \ \text{ is \  odd}\},\\
    I_3 &= \{i\in I \ |\ h_{{\bf e}_i}=-1,\ a_i\  \text{ is \  even}\}, &  J_3 &= \{i\in J \ |\ h_{{\bf e}_i}=-1,\ a_i \ \text{ is \  even}\},\\
    I_4 &= \{i\in I \ |\ h_{{\bf e}_i}=-1,\ a_i\  \text{ is \  odd}\}, & J_4 &= \{i\in J \ |\ h_{{\bf e}_i}=-1,\ a_i \ \text{ is \  odd}\}.
\end{aligned}
\end{equation}
Then $\{1, \cdots, n\} = I\cup J$. Since ${\rm Char}\mathbb{K} \ne 2$, one has also disjoint unions \ $I = \bigcup\limits_{1\le i\le 4} I_i$ and \ $J = \bigcup\limits_{1\le i\le 4} J_i.$ \  Let
$|I|$ denote the number of the elements in $I$.

\begin{lemma}\label{lemmaeven} Assume that ${\rm Char}\mathbb{K} \ne 2$. For $A=A({\bf q, a})$  with $h_{{\bf e}_i}^2=1$ for $1\le i\le n$, assume that there is a compatible permutation $\pi\in S_n$ with $\pi^2 = \Id$.  Then \vskip5pt
    {\rm (1)} \ \ $|I_3|$ is even.
    \vskip5pt
    {\rm (2)} \ \ If $|I_1|+|I_3|=0$, then $|I_4|=0$.
\end{lemma}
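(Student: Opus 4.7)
The plan is to use Lemma \ref{lemmaI}(2)(3) as the central tool. Recall that for $i\in I$,
$$h_{{\bf e}_i}=\prod_{j\in I}q_{ij}^{a_j-1},$$
and that $q_{ij}^2=1$ for $i,j\in I$. Since only the parity of $a_j-1$ matters, and $a_j-1$ is even precisely when $j\in I_2\cup I_4$, the contributions from $j\in I_2\cup I_4$ are trivial. So the first step is to rewrite the above formula as
$$h_{{\bf e}_k}=\prod_{j\in I_1\cup I_3}q_{kj}, \quad \forall\ k\in I.$$
This is the key reformulation; the rest is a simple counting argument.

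For part (1), I will multiply this identity over $k\in I_1\cup I_3$. On one side, since $h_{{\bf e}_k}=1$ for $k\in I_1$ and $h_{{\bf e}_k}=-1$ for $k\in I_3$, the product equals $(-1)^{|I_3|}$. On the other side, the product becomes a double product $\prod_{k,j\in I_1\cup I_3}q_{kj}$. Using $q_{kk}=1$ and pairing the ordered pair $(k,j)$ with $(j,k)$ for $k<j$, each pair contributes $q_{kj}q_{jk}=1$, so the whole double product equals $1$. Hence $(-1)^{|I_3|}=1$ and $|I_3|$ is even.

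For part (2), assuming $|I_1|+|I_3|=0$ means every $j\in I$ has $a_j$ odd, so $a_j-1$ is even for all $j\in I$. Then by Lemma \ref{lemmaI}(2), every factor $q_{ij}^{a_j-1}$ equals $1$, so $h_{{\bf e}_i}=1$ for every $i\in I$. Since $I_4$ is defined by $h_{{\bf e}_i}=-1$, this forces $|I_4|=0$.

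There is no substantive obstacle here; the only nuance is recognizing the correct reformulation in the first step, after which part (1) follows from the palindrome/pairing cancellation of $q$-coefficients and part (2) is essentially immediate.
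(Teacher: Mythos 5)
Your proof is correct, and for part (1) it takes a genuinely different route from the paper's. The paper proves (1) globally: it starts from $\prod_{1\le i\le n}h_{{\bf e}_i}^{a_i-1}=h_{\bf a-1}=1$, observes that every factor with $i\notin I_3\cup J_3$ is trivial, and then kills the $J_3$-contribution by pairing $i$ with $\pi(i)$ via $h_{{\bf e}_i}h_{\pi({\bf e}_i)}=1$ (Lemma \ref{propusefuleq2}(4) together with Lemma \ref{lemmaI}(1)), leaving $(-1)^{|I_3|}=1$. You instead work entirely inside the fixed-point set $I$: you reduce Lemma \ref{lemmaI}(3) to $h_{{\bf e}_k}=\prod_{j\in I_1\cup I_3}q_{kj}$ using $q_{kj}^2=1$, multiply over $k\in I_1\cup I_3$, and let the antisymmetry $q_{kj}q_{jk}=1$ (plus $q_{kk}=1$) collapse the double product to $1$. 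Your version never touches $J$ and does not need the normalization $h_{\bf a-1}=1$ or the relation $h_{{\bf e}_i}h_{\pi({\bf e}_i)}=1$, so it is somewhat more self-contained; the paper's version, on the other hand, reuses machinery ($h_{\bf a-1}=1$ and the $J$-pairing) that it needs elsewhere anyway. Both arguments invoke ${\rm char}\,\mathbb K\ne 2$ only at the final step $(-1)^{|I_3|}=1$. Your part (2) is essentially identical to the paper's: $I=I_2\cup I_4$ forces every exponent $a_j-1$ even, hence $h_{{\bf e}_i}=1$ on $I$, contradicting the definition of $I_4$ unless it is empty.
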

\begin{proof}
    {\rm (1)} \    Since $\prod\limits_{1\le i\le n}h_{{\bf e}_i}^{a_i-1}=h_{\bf a-1}=1$, and $h_{{\bf e}_i}^{a_i-1}=1$ for $ i\in \{1,\cdots, n\}- (I_3\cup J_3),$ one has
    \begin{align*}
        1=\prod_{1\le i\le n}h_{{\bf e}_i}^{a_i-1} & = \prod_{i\in I_3\cup J_3}h_{{\bf e}_i}^{a_i-1}.
    \end{align*}
If $i\in J_3$, then $\pi(i)\ne i$ and $\pi(i)\in J_3$. By Lemma \ref{propusefuleq2}(4), $h_{{\bf e}_i}h_{\pi({\bf e}_i)}=1$ for $1\le i\le n$. Thus
    \[\prod_{i\in J_3}h_{{\bf e}_i}^{a_i-1}\xlongequal {{\rm Lem. \ref{lemmaI}(1)}} \prod_{i\in J_3, i< \pi(i)}h_{{\bf e}_i}^{a_i-1}h_{\pi({\bf e}_i)}^{a_{\pi(i)}-1}= \prod_{i\in J_3, i< \pi(i)}(h_{{\bf e}_i}h_{\pi({\bf e}_i)})^{a_i-1}=1,
    \]
Therefore $\prod\limits_{i\in I_3}h_{{\bf e}_i}^{a_i-1}= 1$. However, for each $i\in I_3$, $h_{{\bf e}_i}^{a_i-1}=-1$, thus $|I_3|$ is even.
\vskip5pt
    {\rm (2)} \  Suppose that $|I_1|+|I_3|= 0$. By Lemma \ref{lemmaI}(3), one has
    $h_{{\bf e}_i} = \prod\limits_{j \in I}  (q_{ij})^{(a_j-1)}= \prod\limits_{j \in I_2\cup I_4}  (q_{ij})^{(a_j-1)}$, $\forall \ i\in I$. Since $q_{ij}^2 = 1$ for $i,j\in I$ (cf. Lemma \ref{lemmaI}(2))
    and $a_i-1$ is even for $i\in I_2\cup I_4$, one gets
    $
        h_{{\bf e}_i} =  \prod\limits_{j \in I_2\cup I_4}  (q_{ij})^{(a_j-1)} = 1, \ \forall \ i\in I.
    $
    However, by definition  $h_{{\bf e}_i}=-1$ for $i\in I_4$. It follows that $|I_4|=0$.
\end{proof}

\begin{lemma}\label{IJ} \ Assume that ${\rm Char}\mathbb{K} \ne 2$. Suppose that $A=A({\bf q,a})$ admits a bi-Frobenius algebra structure with permutation antipode.
Then there is a compatible permutation $\pi\in S_n$ with $\pi^2 = \Id$, such that $|I_1|+|I_3|\ne 0$ or that $\frac{|J_3|}{2}$ is even.
\end{lemma}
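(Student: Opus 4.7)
\textbf{Proof plan for Lemma \ref{IJ}.}

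The plan is to use the compatible permutation $\pi$ and scalars $c_i\in\mathbb K$ produced by Theorem \ref{thmperS}, and then convert the identity $q_\pi\prod_i c_i^{a_i-1}=1$ into the parity statement about $|J_3|/2$ in the case $|I_1|+|I_3|=0$. So: take the $\pi$ from Theorem \ref{thmperS}; if $|I_1|+|I_3|\ne 0$ the conclusion holds, so assume $|I_1|+|I_3|=0$. Then by Lemma \ref{lemmaeven}(2) we get $|I_4|=0$, so $I=I_2$, meaning every $i\in I$ satisfies $h_{{\bf e}_i}=1$ and $a_i$ odd.

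First I would note that $|J_3|$ is automatically even: for $i\in J_3$, $a_{\pi(i)}=a_i$ is even, and by Lemma \ref{propusefuleq2}(4) together with $h_{{\bf e}_i}^2=1$ we get $h_{{\bf e}_{\pi(i)}}=h_{{\bf e}_i}^{-1}=h_{{\bf e}_i}=-1$, so $\pi(i)\in J_3$; since $\pi(i)\ne i$ and $\pi^2=\Id$, the set $J_3$ splits into $\pi$-orbits of size $2$. Next I compute $q_\pi$: by Lemma \ref{lemmaI}(4), $q_\pi=\prod_{j,k\in I,\ j<k}q_{kj}^{(a_k-1)(a_j-1)}$; under $I=I_2$ each $a_k-1$ is even, so each exponent is even, and by Lemma \ref{lemmaI}(2) $q_{kj}^2=1$ for $j,k\in I$, giving $q_\pi=1$.

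Then I would evaluate $\prod_{1\le i\le n}c_i^{a_i-1}$ by splitting into $I$ and $J$. For $i\in I=I_2$, $\pi(i)=i$ gives $c_i^2=h_{{\bf e}_i}=1$ and $a_i-1$ is even, so $c_i^{a_i-1}=1$. For the $J$-part, I pair $i$ with $\pi(i)$ (well-defined since $\pi^2=\Id$ has no fixed points on $J$), using $a_{\pi(i)}=a_i$ and $c_ic_{\pi(i)}=h_{{\bf e}_i}$ to get
\[
c_i^{a_i-1}c_{\pi(i)}^{a_{\pi(i)}-1}=(c_ic_{\pi(i)})^{a_i-1}=h_{{\bf e}_i}^{a_i-1}.
\]
The case analysis $i\in J_1,J_2,J_3,J_4$ (defined in \eqref{equationIJ}) gives $h_{{\bf e}_i}^{a_i-1}=1$ except for $i\in J_3$, where $h_{{\bf e}_i}=-1$ and $a_i-1$ is odd so $h_{{\bf e}_i}^{a_i-1}=-1$. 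Hence
\[
\prod_{1\le i\le n}c_i^{a_i-1}=\prod_{\substack{i\in J\\ i<\pi(i)}}h_{{\bf e}_i}^{a_i-1}=(-1)^{|J_3|/2}.
\]
Plugging into $q_\pi\prod_i c_i^{a_i-1}=1$ yields $(-1)^{|J_3|/2}=1$, so $|J_3|/2$ is even, as required.

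The routine parts are the case-by-case evaluation of $h_{{\bf e}_i}^{a_i-1}$ and the pairing argument; the only substantive point is recognizing that $I=I_2$ forces both $q_\pi=1$ and the $I$-factor of $\prod c_i^{a_i-1}$ to equal $1$, so that the single sign from $J_3$ must be $+1$. I anticipate no real obstacle beyond carefully bookkeeping the pairing of elements of $J$ under $\pi$ (analogous to the computation in the proof of Lemma \ref{lemmaeven}).
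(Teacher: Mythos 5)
Your proposal is correct and follows essentially the same route as the paper's proof: both obtain $(\pi,c_i)$ from the necessity direction of the main characterization (you cite Theorem \ref{thmperS}, the paper cites Theorem \ref{mainthm}, which is the same data), reduce to the case $I=I_2$ via Lemma \ref{lemmaeven}(2), observe $q_\pi=1$ there, and convert $\prod_i c_i^{a_i-1}=1$ into $(-1)^{|J_3|/2}=1$ by pairing $i$ with $\pi(i)$ over $J$. The paper merely organizes the case-by-case evaluation of $c_i^{a_i-1}$ and $(c_ic_{\pi(i)})^{a_i-1}$ into two tables; the mathematical content is identical.
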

\begin{proof} By Theorem \ref{mainthm}, there is a compatible permutation $\pi\in S_n$ with $\pi^2 = \Id$ and $c_i\in\mathbb K, \ 1\le i\le n$, such that $c_ic_{\pi(i)}=h_{{\bf e}_i}$ for $1\le i\le n$
and that $$q_\pi\prod\limits_{1\le i\le n}c_i^{a_i-1} = 1\eqno(*)$$
where $q_\pi = \prod\limits_{j, k \in I, j <k} (q_{kj})^{(a_k-1)(a_j-1)}$ (cf.  Lemma \ref{lemmaI}(4)). \ Write $I_4$ as a disjoint union $I = I_{41}\cup I_{42}$, where
$$I_{41}= \{ i \in I_4 \ |\  \frac{a_i-1}{2}\ \ \text{is \  odd} \}, \ \ \ \ \ I_{42} = \{i \in I_4 \ |\  \frac{a_i-1}{2}\ \ \text{ is \  even}\}.$$
By the definition of $I_i$ and $J_i \ (1\le i\le 4)$ and $I_{4j} \ (j=1, 2)$ and by $c_ic_{\pi(i)}=h_{{\bf e}_i} \ (1\le i\le n)$, one has the following two tables:
\begin{table}[htbp]
	\centering
	\caption{When $i\in I$}
	\label{table1}
	\begin{tabular}{|c|c|c|c|c|c|}
		\hline
		& & & & & \\[-6pt]
		& $i\in I_1$ & $i\in I_2$ & $i\in I_3$ & $i\in I_{41}$ & $i\in I_{42}$ \\
        \hline
        & & & & &\\[-6pt]
        $c_i^2$ & 1 & 1 & -1 & -1 & -1  \\
        \hline
        & & & & &\\[-6pt]
        $c_i$ & $\pm 1$ & $\pm 1$ & $\pm\sqrt{-1} $ & $\pm\sqrt{-1} $ & $\pm\sqrt{-1}$ \\
		\hline
		& & & & &\\[-6pt]
		$c_i^{a_i-1}$ & $\pm 1$ & 1 & $\pm\sqrt{-1} $ & -1 & 1  \\
		\hline
	\end{tabular}
\end{table}
\begin{table}[htbp]
	\centering
	\caption{When $i\in J$}
	\label{table2}
	\begin{tabular}{|c|c|c|c|c|}
		\hline
		& & & &  \\[-6pt]
		&$i\in J_1$&$i\in J_2$&$i\in J_3$&$i\in J_{4}$ \\
        \hline
        & & & & \\[-6pt]
        $c_ic_{\pi(i)}$&1&1&-1&-1 \\
		\hline
		& & & & \\[-6pt]
		$(c_ic_{\pi(i)})^{a_i-1}$&1 &1&-1 &1  \\
		\hline
	\end{tabular}
\end{table}

Suppose that $|I_1|+|I_3|= 0$. Then $|I_4|=0$, by Lemma \ref{lemmaeven}(2).
Since
$I= \bigcup\limits_{1\le i\le 4} I_i= I_2$ and $a_i-1$ is even for $i\in I_2$,   one has
$$q_{\pi}=\prod\limits_{j, k \in I, j <k} (q_{kj})^{(a_k-1)(a_j-1)}= \prod\limits_{j, k \in I_2, j <k} (q_{kj})^{(a_k-1)(a_j-1)}=1.$$
Thus by (*) one gets $\prod\limits_{1\le i\le n} c_i^{a_i-1}=1.$

On the other hand, by Table (1), one has $\prod\limits_{i\in I_2} c_i^{a_i-1}=1$. Thus
\begin{align*}
  1=  \prod\limits_{1\le i\le n} c_i^{a_i-1} &= \prod\limits_{i\in I} c_i^{a_i-1}\prod\limits_{i\in J}c_i^{a_i-1} =\prod\limits_{i\in I_2} c_i^{a_i-1}\prod\limits_{i\in J} c_i^{a_i-1}
    \\
    &\xlongequal{{\rm Lem. \ref{lemmaI} (1)}} \prod\limits_{i\in J, i< \pi(i)} c_i^{a_i-1}\prod\limits_{i\in J, i< \pi(i)} (c_{\pi(i)})^{a_{\pi(i)}-1}
    \\
    & = \prod\limits_{i\in J, i< \pi(i)} (c_ic_{\pi(i)})^{a_i-1}
    \\
    & = \prod\limits_{i\in J_1\cup J_2\cup J_4, i< \pi(i)} (c_ic_{\pi(i)})^{a_i-1}\prod\limits_{i\in J_3, i< \pi(i)} (c_ic_{\pi(i)})^{a_i-1}
    \\ & \xlongequal{\rm Table \ (2)} \prod\limits_{i\in J_3, i< \pi(i)} (c_ic_{\pi(i)})^{a_i-1} = \prod\limits_{i\in J_3, i<\pi(i)} (-1)
    \\
    &= (-1)^{ \frac{|J_3|}{2}}.
\end{align*}
Since ${\rm Char} \mathbb{K} \ne 2$,   $\frac{|J_3|}{2}$ is even.
\end{proof}

\subsection{Main result I: the case of $\sqrt{-1}\in \mathbb{K}$}

\begin{theorem} \label{mainthm2} \ Assume that ${\rm Char}\mathbb{K}\ne 2$ and $\sqrt{-1}\in \mathbb{K}$. Then $A= A({\bf q,a })$ admits a bi-Frobenius algebra structure with permutation antipode
if and only if $\mathcal{N}^2=\Id$ and there is a compatible permutation $\pi\in S_n$ with $\pi^2 = \Id$, such that $|I_1|+|I_3|\ne 0$ or $\frac{|J_3|}{2}$ is even, where $I_1, I_3$ and $J_3$ are defined in $(\ref{equationIJ})$.
\vskip5pt

\vskip5pt

If this is the case, then  $(A, \ x_{\bf a-1}^*, \ x_{\bf a-1}, \ S)$ is a bi-Frobenius algebra,
where  the comultiplication $\Delta$ is given by $(6.3)$ and $S$ is given by $(\ref{equationS})$. \end{theorem}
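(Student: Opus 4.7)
The plan is to deduce Theorem \ref{mainthm2} from Theorem \ref{mainthm} by analyzing precisely when the coefficient conditions of the latter can be satisfied. For necessity I will simply combine Proposition \ref{thenecessity}(iv), which supplies $\mathcal N^2=\Id$, with Lemma \ref{IJ}, which provides a compatible $\pi\in S_n$ with $\pi^2=\Id$ meeting the parity condition; no further work is required in that direction.

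For sufficiency, starting from $\mathcal N^2=\Id$ and a compatible involution $\pi\in S_n$ satisfying the stated parity hypothesis, the task reduces via Theorem \ref{mainthm} to producing $c_i\in\mathbb K-\{0\}$ with $c_ic_{\pi(i)}=h_{\mathbf{e}_i}$ and $q_\pi\prod_i c_i^{a_i-1}=1$. The construction will proceed orbit by orbit under $\pi$: for each two-element orbit $\{i,\pi(i)\}$ with $i\in J$ I choose $c_i\in\mathbb K-\{0\}$ arbitrarily and set $c_{\pi(i)}:=h_{\mathbf{e}_i}/c_i$; for each fixed index $i\in I$ I solve $c_i^2=h_{\mathbf{e}_i}\in\{\pm 1\}$, which is always possible thanks to $\sqrt{-1}\in\mathbb K$. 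The heart of the matter is the product condition. The $J$-contribution collapses via the pairing to $\prod_{i\in J,\,i<\pi(i)}h_{\mathbf{e}_i}^{a_i-1}=(-1)^{|J_3|/2}$, since only indices in $J_3$ (where $h_{\mathbf{e}_i}=-1$ and $a_i-1$ is odd) contribute a non-trivial sign. Using the parity tables already assembled in the proof of Lemma \ref{IJ}, the $I$-contribution has the form $\epsilon\cdot(-1)^{|I_{41}|+|I_3|/2}$, where $\epsilon\in\{\pm 1\}$ is freely adjustable provided $|I_1|\ge 1$ (flip a sign in $I_1$) or $|I_3|\ge 2$ (flip a sign of one $c_i$ with $i\in I_3$; recall $|I_3|$ is even by Lemma \ref{lemmaeven}(1)).

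The proof then concludes by a case split on the hypothesis. If $|I_1|+|I_3|\ne 0$, tune $\epsilon$ so that $q_\pi\prod_i c_i^{a_i-1}=1$ holds. Otherwise $|I_1|+|I_3|=0$, in which case Lemma \ref{lemmaeven}(2) forces $|I_4|=0$, so $I=I_2$ and every $a_i$ with $i\in I$ is odd; then every exponent $(a_k-1)(a_j-1)$ in the defining product of $q_\pi$ is even and combined with $q_{kj}^2=1$ from Lemma \ref{lemmaI}(2) this gives $q_\pi=1$, so the total product reduces to $(-1)^{|J_3|/2}$, which equals $1$ precisely because $|J_3|/2$ is even by hypothesis. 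I do not anticipate a serious obstacle beyond this bookkeeping, as the core work was already packaged in Lemmas \ref{lemmaeven} and \ref{IJ} together with Theorem \ref{mainthm}; the one delicate observation is recognizing that the parity dichotomy in the hypothesis aligns exactly with the dichotomy between having or lacking sign freedom in the choice of the $c_i$ for $i\in I$.
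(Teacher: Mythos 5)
Your proposal is correct and follows essentially the same route as the paper: necessity via Proposition \ref{thenecessity}(iv) and Lemma \ref{IJ}, and sufficiency by reducing to Theorem \ref{mainthm} and constructing the $c_i$ orbitwise using the partition $(\ref{equationIJ})$, with the case split on $|I_1|+|I_3|$. The only (cosmetic) difference is that you fix all $c_i$ up to sign and then flip one sign in $I_1$ or $I_3$ to correct the total product, whereas the paper designates an index $i_0\in I_1$ (or $I_3$) and solves for $c_{i_0}$ outright; the underlying parity bookkeeping is identical.
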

\begin{proof} Assume that $A({\bf q,a })$ is a bi-Frobenius algebra  with permutation antipode. Then $\mathcal{N}^2=\Id$, by Proposition \ref{thenecessity}(iv).
By  Lemma \ref{IJ}, there is a compatible permutation $\pi\in S_n$ with $\pi^2 = \Id$ such that $|I_1|+|I_3|\ne 0$ or $\frac{|J_3|}{2}$ is even.

\vskip5pt

Conversely, assume that $\mathcal{N}^2=\Id$ and there is a compatible permutation $\pi\in S_n$ with $\pi^2 = \Id$ such that $|I_1|+|I_3|\ne 0$ or $\frac{|J_3|}{2}$ is even.
By Theorem \ref{mainthm}, it suffices  to show that there are $c_i\in\mathbb{K}, \ 1\le i\le n, $ such that
    \[
        c_ic_{\pi(i)}=h_{{\bf e}_i}, \ 1\le i\le n; \ \ \ \ q_\pi\prod\limits_{1\le i\le n} c_i^{a_i-1}=1
    \]
where \ $q_\pi = \prod\limits_{j,k\in I, j <k} ({\bf q}^{\langle  \pi({\bf e}_k)|\pi({\bf e}_j) \rangle})^{(a_k-1)(a_j-1)} = \pm 1$ (cf. Lemma \ref{lemmaI}(4)). By Lemma \ref{propusefuleq2}(4), $h_{{\bf e}_i} h_{{\bf e}_{\pi(i)}} = 1, \ 1\le i\le n$. Since $h_{{\bf e}_i}^2=1$ for $1\le i\le n$,  $h_{{\bf e}_{\pi(i)}}=h_{{\bf e}_i} = \pm 1$, $1\le i\le n$.
We divide the proof into three cases.

\vskip5pt

{\bf Case 1:  \ $|I_1|\ne 0$}. In this case we choose $i_0\in I_1$ and  $c_i\in\mathbb K \ (1\le i\le n)$ as follows:
$$c_i = \begin{cases}1, & \mbox{if} \ \ i\in I_1\cup I_2-\{i_0\};\\
    \sqrt{-1}, & \mbox{if} \ \ i\in I_3\cup I_4;\\
    1, & \mbox{if} \ \ i\in J, \ i<\pi(i);\\
    h_{{\bf e}_i}, & \mbox{if} \ \ i\in J, \ i>\pi(i);\\
    q_\pi\prod\limits_{i\in \{1,\cdots, n\}-\{i_0\}}c_i^{a_i-1}, &  \mbox{if} \ \ i=i_0.
\end{cases}$$
By the definition of $I_i, \ 1\le i\le 4$, one has
$$c_ic_{\pi(i)} = \begin{cases}c_i^2=1=h_{{\bf e}_i}, & \mbox{if} \ \ i\in I_1\cup I_2-\{i_0\};\\
    c_i^2=-1=h_{{\bf e}_i}, & \mbox{if} \ \ i\in I_3\cup I_4;\\
    1\cdot c_{\pi(i)}\xlongequal{} h_{{\bf e}_{\pi(i)}}=h_{{\bf e}_i}, & \mbox{if} \ \ i\in J, \ i<\pi(i);\\
    h_{{\bf e}_i}c_{\pi(i)}=h_{{\bf e}_i}\cdot 1 = h_{{\bf e}_i}, & \mbox{if} \ \ i\in J, \ i>\pi(i).
\end{cases}$$
This shows that $c_ic_{\pi(i)}=h_{{\bf e}_i}$ for $i\in \{1,\cdots, n\}-\{i_0\}$. By Lemma \ref{lemmaeven}(1), $|I_3|$ is even, and hence
$\prod\limits_{i\in I_3}c_i^{a_i-1}=\pm 1$ since $a_i-1$ is odd for $i\in I_3$.
Notice that $a_i-1$ is even for $i\in I_4$ and $h_{{\bf e}_i}=\pm 1$ for $1\le i\le n$.
Thus
\begin{align*}
    c_{i_0}&=q_\pi\prod\limits_{i\in \{1,\cdots, n\}-\{i_0\}}c_i^{a_i-1}\\
    &  \xlongequal{{\rm Lem.} \ref{lemmaI}(4)}  \pm \prod\limits_{i\in I_1\cup I_2 -\{i_0\}}c_i^{a_i-1}\prod\limits_{i\in I_3}c_i^{a_i-1}\prod\limits_{i\in I_4}c_i^{a_i-1}\prod\limits_{i\in J}c_i^{a_i-1}=  \pm 1.
\end{align*}
Therefore $c_{i_0}c_{\pi(i_0)}=c_{i_0}^2= 1 = h_{{\bf e}_{i_0}}$, where the last equality follows from the definition of $I_1$ and $i_0\in I_1$.

\vskip5pt
Moreover, notice that $a_{i_0}$ is even. Thus
\begin{align*} q_\pi \prod\limits_{1\le i\le n}c_i^{a_i-1}&  = q_\pi c_{i_0}^{a_{i_0}-1}\prod\limits_{i\in \{1,\cdots, n\}-\{i_0\}}c_i^{a_i-1} \\  & =  c_{i_0}^{a_{i_0}-1} (q_\pi\prod\limits_{i\in \{1,\cdots, n\}-\{i_0\}}c_i^{a_i-1}) \\  & = c_{i_0}^{a_{i_0}-1} c_{i_0} = c_{i_0}^{a_{i_0}} = (\pm 1)^{a_{i_0}} =1.
\end{align*}
Now, by Theorem \ref{mainthm},  $A$ admits a bi-Frobenius algebra structure with permutation antipode.

\vskip5pt

{\bf Case 2: \ $|I_3|\ne 0$}. \  In this case we choose $i_0\in I_3$ and $c_i\in\mathbb K \ (1\le i\le n)$ as follows:
$$c_i = \begin{cases}
    1, & \mbox{if} \ i\in I_1\cup I_2;\\
    \sqrt{-1}, & \mbox{if} \ i\in I_3\cup I_4-\{i_0\};\\
    1, & \mbox{if} \ i\in J, \ i<\pi(i);\\
    h_{{\bf e}_i}, & \mbox{if} \ i\in J, \ i>\pi(i);\\
    (-1)^{\frac{a_{i_0}}{2}}q_{\pi}\prod\limits_{i\in \{1, \cdots, n\}-\{i_0\}}c_i^{a_i-1}, &  \mbox{if} \ i=i_0.\\
\end{cases}$$
By the definition of $I_i, \ 1\le i\le 4$, one has
$$c_ic_{\pi(i)} = \begin{cases}c_i^2=1=h_{{\bf e}_i}, & \mbox{if} \ \ i\in I_1\cup I_2;\\
    c_i^2=-1=h_{{\bf e}_i}, & \mbox{if} \ \ i\in I_3\cup I_4-\{i_0\};\\
    1\cdot c_{\pi(i)}= h_{{\bf e}_{\pi(i)}}=h_{{\bf e}_i}, & \mbox{if} \ \ i\in J, \ i<\pi(i);\\
    h_{{\bf e}_i}c_{\pi(i)}=h_{{\bf e}_i}\cdot 1 = h_{{\bf e}_i} , & \mbox{if} \ \ i\in J, \ i>\pi(i).
\end{cases}$$
Thus $c_ic_{\pi(i)}=h_{{\bf e}_i}$ for $i\in \{1,\cdots, n\}-\{i_0\}$. By Lemma \ref{lemmaeven}(1) $|I_3|$ is even; and by definition $a_i$ is even for $i\in I_3$, thus $(|I_3|-1)(a_i-1)$ is odd for $i\in I_3$. Hence
$\prod\limits_{i\in I_3-\{i_0\}}c_i^{a_i-1}=\pm \sqrt{-1}$. Notice that $a_i-1$ is even for $i\in I_4$ and that $h_{{\bf e}_i}=\pm 1$ for $1\le i\le n$. Thus
\begin{align*}
    c_{i_0} &= (-1)^{\frac{a_{i_0}}{2}}q_{\pi}\prod\limits_{i\in \{1,\cdots, n\}-\{i_0\}}c_i^{a_i-1}\\
    & = \pm (-1)^{\frac{a_{i_0}}{2}}\prod\limits_{i\in I_1\cup I_2}c_i^{a_i-1} \prod\limits_{i\in I_3-\{i_0\}}c_i^{a_i-1}\prod\limits_{i\in I_4}c_i^{a_i-1}\prod\limits_{i\in J}c_i^{a_i-1} \\
    &=\pm \sqrt{-1}.
\end{align*}
It follows that $c_{i_0}c_{\pi(i_0)}=c_{i_0}^2=-1 = h_{{\bf e}_{i_0}}$, where the last equality follows from the definition of $I_3$ and $i_0\in I_3$.
\vskip5pt
Moreover, notice that $a_{i_0}$ is even, since $i_0\in I_3$. Thus
\begin{align*}q_\pi \prod\limits_{1\le i\le n}c_i^{a_i-1}& = q_\pi c_{i_0}^{a_{i_0}-1}\prod\limits_{i\in \{1,\cdots, n\}-\{i_0\}}c_i^{a_i-1}
\\    &= c_{i_0}^{a_{i_0}-1}q_\pi\prod\limits_{i\in \{1,\cdots, n\}-\{i_0\}}c_i^{a_i-1}
\\    &= c_{i_0}^{a_{i_0}-1}((-1)^{\frac{a_{i_0}}{2}}c_{i_0})
= (-1)^{\frac{a_{i_0}}{2}}c_{i_0}^{a_{i_0}}
 = (-1)^{\frac{a_{i_0}}{2}}(c_{i_0}^2)^{\frac{a_{i_0}}{2}}
\\ &= (-1)^{\frac{a_{i_0}}{2}}(-1)^{\frac{a_{i_0}}{2}} = (-1)^{a_{i_0}} = 1.
\end{align*}
By Theorem \ref{mainthm},  $A$ admits a bi-Frobenius algebra structure with permutation antipode.

\vskip5pt

{\bf Case 3:  \ $|I_1|+|I_3|= 0$}.  In this case, by Lemma \ref{lemmaeven}(2) one has $|I_4| = 0$ and $I = I_2$. By assumption $\frac{|J_3|}{2}$ is even.  We choose $c_i$ as follows:
$$c_i = \begin{cases}
    1, & \mbox{if} \ i\in I = I_2;\\
    1, & \mbox{if} \ i\in J, \ i<\pi(i);\\
    h_{{\bf e}_i}, & \mbox{if} \ i\in J, \ i>\pi(i).
\end{cases}$$
By the definition of $I_2$,   $h_{{\bf e}_i} = 1$ for $i\in I_2$. Thus
$$c_ic_{\pi(i)} = \begin{cases}c_i^2=1=h_{{\bf e}_i}, & \mbox{if} \ \ i\in I = I_2;\\
    1\cdot c_{\pi(i)}= h_{{\bf e}_{\pi(i)}}=h_{{\bf e}_i}, & \mbox{if} \ \ i\in J, \ i<\pi(i);\\
    h_{{\bf e}_i}c_{\pi(i)} = h_{{\bf e}_i}\cdot 1 = h_{{\bf e}_i} , & \mbox{if} \ \ i\in J, \ i>\pi(i),
\end{cases}$$
i.e., $c_ic_{\pi(i)}=h_{{\bf e}_i}$ for $1\le i\le n$.  Since $I = I_2$, it follows that  $$q_{\pi}=\prod\limits_{j, k \in I,\ j <k} (q_{kj})^{(a_k-1)(a_j-1)}=1=\prod\limits_{j, k \in I_2,\ j <k} (q_{kj})^{(a_k-1)(a_j-1)}=1$$
since $ q_{jk}^2 = 1$ for $j,k\in I$ (cf. Lemma \ref{lemmaI}(2)) and $a_i-1$ is even for $i\in I_2$. Hence
\begin{align*}q_\pi \prod\limits_{1\le i\le n}c_i^{a_i-1}&  =\prod\limits_{1\le i\le n} c_i^{a_i-1}
    = \prod\limits_{i\in I} c_i^{a_i-1}\prod\limits_{i\in J} c_i^{a_i-1}
    \\&=\prod\limits_{i\in J} c_i^{a_i-1} \xlongequal{{\rm Lem. \ref{lemmaI} (1)}} \prod\limits_{i\in J, i< \pi(i)} c_i^{a_i-1}\prod\limits_{i\in J, i< \pi(i)} c_{\pi(i)}^{a_{\pi(i)}-1}
    \\&= \prod\limits_{i\in J,\ i< \pi(i)} (c_ic_{\pi(i)})^{a_i-1} =\prod\limits_{i\in J,\ i< \pi(i)} h_{{\bf e}_i}^{a_i-1}
    \\& =\prod\limits_{i\in J_1\cup J_2\cup J_4, i< \pi(i)} h_{{\bf e}_i}^{a_i-1}\prod\limits_{i\in J_3, i< \pi(i)} h_{{\bf e}_i}^{a_i-1}
    \\ & = \prod\limits_{i\in J_3,\ i< \pi(i)} h_{{\bf e}_i}^{a_i-1} = \prod\limits_{i\in J_3,\ i<\pi(i)} (-1)^{a_i-1}
    \\
    & =(-1)^{ \frac{|J_3|}{2}}=1.
\end{align*}
By Theorem \ref{mainthm},  $A$ admits a bi-Frobenius algebra structure with permutation antipode.

\vskip5pt

Finally, the last sentence of Theorem \ref{mainthm2} follows from Theorem \ref{mainthm}. This completes the proof.
\end{proof}

\subsection{\bf Main result II: the case of $\sqrt{-1}\notin \mathbb{K}$}

\begin{theorem} \label{mainthm3} \  Assume that ${\rm Char}\mathbb{K}\ne 2$ and $\sqrt{-1}\notin \mathbb{K}$. Then $A = A({\bf q,a })$ admits a bi-Frobenius algebra structure with permutation antipode
if and only if $\mathcal{N}^2=\Id$ and there is a compatible permutation $\pi\in S_n$ with $\pi^2 = \Id$ such that
the following conditions are satisfied:

{\rm (1)}  \ $|I_3|+|I_4|= 0,$

{\rm (2)} \ $|I_1|\ne 0$ or $\frac{|J_3|}{2}$ is even,

\noindent
where  $I_1, I_3, I_4$ and $J_3$ are defined as in $(\ref{equationIJ})$.
\vskip5pt

    If this is the case, then  $(A, \ x_{\bf a-1}^*, \ x_{\bf a-1}, \ S)$ is a bi-Frobenius algebra,
    where  the comultiplication $\Delta$ is given by $(6.3)$ and $S$ is given by $(\ref{equationS})$.
\end{theorem}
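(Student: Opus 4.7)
The plan is to build on Theorem \ref{mainthm}: since $\mathrm{char}\,\mathbb K\ne 2$, the whole problem reduces to deciding when the constants $c_i$ demanded by that theorem actually lie in $\mathbb K$ in the absence of $\sqrt{-1}$.

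For necessity, I would start from a bi-Frobenius structure with permutation antipode. By Proposition \ref{thenecessity}(iv), $\mathcal N^2=\Id$; and Theorem \ref{mainthm} produces a compatible involution $\pi$ together with scalars $c_i\in\mathbb K$ satisfying $c_ic_{\pi(i)}=h_{\mathbf e_i}$ and $q_\pi\prod_i c_i^{a_i-1}=1$. For $i\in I$ this forces $c_i^2=h_{\mathbf e_i}$; if $i\in I_3\cup I_4$, then $h_{\mathbf e_i}=-1$, and so $c_i^2=-1$, impossible since $\sqrt{-1}\notin\mathbb K$. Hence $|I_3|+|I_4|=0$, giving (1). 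Condition (2) then follows by applying Lemma \ref{IJ} to this same $\pi$: its conclusion ``$|I_1|+|I_3|\ne 0$ or $\tfrac{|J_3|}{2}$ even'' collapses, in view of $|I_3|=0$, to precisely (2).

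For sufficiency, given $\pi$ satisfying (1) and (2), by Theorem \ref{mainthm} it suffices to construct $c_i\in\mathbb K$ with $c_ic_{\pi(i)}=h_{\mathbf e_i}$ and $q_\pi\prod_i c_i^{a_i-1}=1$. The disjunction in (2) corresponds exactly to Cases 1 and 3 of the proof of Theorem \ref{mainthm2}. If $|I_1|\ne 0$, I would pick $i_0\in I_1$ and take the assignment from Case 1 there: $c_i=1$ on $(I_1\cup I_2)\setminus\{i_0\}$, $c_i=1$ or $h_{\mathbf e_i}=\pm 1$ on $J$ according as $i<\pi(i)$ or $i>\pi(i)$, and $c_{i_0}=q_\pi\prod_{i\ne i_0}c_i^{a_i-1}$. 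By (1), the clause ``$c_i=\sqrt{-1}$ on $I_3\cup I_4$'' is vacuous, so all assignments stay in $\{\pm 1\}\subset\mathbb K$ (using $q_\pi=\pm 1$ from Lemma \ref{lemmaI}(4)), and the verification of the two required identities is unchanged. If instead $|I_1|=0$, then $|I_1|+|I_3|=0$, and Lemma \ref{lemmaeven}(2) independently forces $|I_4|=0$ (so (1) holds automatically); applying Case 3 of Theorem \ref{mainthm2} yields $c_i\in\{\pm 1\}$, with the final consistency check reducing to $(-1)^{|J_3|/2}=1$, which is exactly guaranteed by (2).

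The main obstacle is ensuring that no step inherited from Theorem \ref{mainthm2} secretly relies on $\sqrt{-1}$; this is precisely what condition (1) controls, so the necessity and sufficiency arguments are dual to each other through that single requirement. Once the $c_i$ are in hand, the explicit bi-Frobenius structure with comultiplication $(6.4)$ and antipode (\ref{equationS}) is delivered by Theorem \ref{sufficiency}.
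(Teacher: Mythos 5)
Your proposal is correct and follows essentially the same route as the paper: necessity via Proposition \ref{thenecessity}(iv), Theorem \ref{mainthm} (forcing $c_i^2=h_{\mathbf e_i}=-1$ on $I_3\cup I_4$, hence $|I_3|+|I_4|=0$) and Lemma \ref{IJ}; sufficiency by re-running Cases 1 and 3 of the proof of Theorem \ref{mainthm2}, observing that condition (1) makes the $\sqrt{-1}$-valued assignments vacuous so all $c_i$ lie in $\{\pm 1\}\subset\mathbb K$. The paper's own proof is exactly this two-case reduction, so no further comment is needed.
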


\begin{proof} \ If $A({\bf q,a })$ admits a bi-Frobenius algebra structure with permutation antipode. Then $\mathcal{N}^2=\Id$ by Proposition \ref{thenecessity}(iv).
    By Lemma \ref{IJ}, there is a compatible permutation $\pi\in S_n$ with $\pi^2 = \Id$ such that $|I_1|+|I_3|\ne 0$ or $\frac{|J_3|}{2}$ is even.
\vskip5pt
    By Theorem \ref{mainthm}, there are $c_i\in \mathbb{K}, \ 1\le i\le n,$ such that $c_ic_{\pi(i)}=h_{{\bf e}_i}, \ 1\le i\le n$. If there is $i\in I_3\cup I_4$, then
$$c_i^2 = c_ic_{\pi(i)} =h_{{\bf e}_i}= -1$$ since by definition $h_{{\bf e}_i}=-1$ for $i\in I_3\cup I_4$. This contradicts the assumption $\sqrt{-1}\notin\mathbb{K}$.
Thus $|I_3|+|I_4|=0$, in particular $|I_3| = 0$. Hence  $|I_1|\ne 0$ or $\frac{|J_3|}{2}$ is even. Again by Theorem \ref{mainthm}, $(A, \ x_{\bf a-1}^*, \ x_{\bf a-1}, \ S)$ is a bi-Frobenius algebra,
    where  the comultiplication $\Delta$ is given by $(6.3)$ and $S$ is given by $(\ref{equationS})$.

    \vskip5pt

    Conversely, assume that $\mathcal{N}^2=\Id$ and there is a compatible permutation $\pi\in S_n$ with $\pi^2 = \Id$ such that $|I_3|+|I_4|= 0$ and that $|I_1|\ne 0$ or $\frac{|J_3|}{2}$ is even.
The proof below is similar to the one for Theorem \ref{mainthm2}. For completeness we include the proof. By Theorem \ref{mainthm} and Lemma \ref{lemmaI}(4), it suffices  to show that there are $c_i\in\mathbb{K}, \ 1\le i\le n, $ such that
        \[
            c_ic_{\pi(i)}=h_{{\bf e}_i}, \ 1\le i\le n; \ \ \ \ q_\pi\prod\limits_{1\le i\le n} c_i^{a_i-1}=1
        \]
where $q_\pi = \prod_{j,k\in I, j <k} ({\bf q}^{\langle  \pi({\bf e}_k)|\pi({\bf e}_j) \rangle})^{(a_k-1)(a_j-1)} = \pm 1$.    By Lemma \ref{propusefuleq2}(4), $h_{{\bf e}_i} h_{{\bf e}_{\pi(i)}} = 1, \ 1\le i\le n$. Since $h_{{\bf e}_i}^2=1$ for $1\le i\le n$,  $h_{{\bf e}_{\pi(i)}}=h_{{\bf e}_i} = \pm 1$, $1\le i\le n$.

    \vskip5pt

    {\bf Case 1}:  $|I_3|+|I_4|= 0$ and\ $|I_1|\ne 0$. In this case $I=I_1\cup I_2$ and there is $i_0\in I_1$. Choose $c_i$ as follows:
    $$c_i = \begin{cases}1, & \mbox{if} \ \ i\in I_1\cup I_2-\{i_0\};\\
        1, & \mbox{if} \ \ i\in J, \ i<\pi(i);\\
        h_{{\bf e}_i}, & \mbox{if} \ \ i\in J, \ i>\pi(i);\\
        q_\pi\prod\limits_{i\in \{1,\cdots, n\}-\{i_0\}}c_i^{a_i-1}, &  \mbox{if} \ \ i=i_0.
    \end{cases}$$
     By the definition of $I_1$ and $I_2$, one has
    $$c_ic_{\pi(i)} = \begin{cases}c_i^2=1=h_{{\bf e}_i}, & \mbox{if} \ \ i\in I_1\cup I_2-\{i_0\};\\
        1\cdot c_{\pi(i)}\xlongequal{} h_{{\bf e}_{\pi(i)}}=h_{{\bf e}_i}, & \mbox{if} \ \ i\in J, \ i<\pi(i);\\
        h_{{\bf e}_i}c_{\pi(i)}=h_{{\bf e}_i}\cdot 1 = h_{{\bf e}_i}, & \mbox{if} \ \ i\in J, \ i>\pi(i).
    \end{cases}$$
    This shows that $c_ic_{\pi(i)}=h_{{\bf e}_i}$ for $i\in \{1,\cdots, n\}-\{i_0\}$. Notice that $h_{{\bf e_i}}=\pm 1$ for $1\le i\le n$.
    Thus
    \begin{align*}
        c_{i_0}&=q_\pi\prod\limits_{i\in \{1,\cdots, n\}-\{i_0\}}c_i^{a_i-1}\\
        &  \xlongequal{{\rm Lem.} \ref{lemmaI}(4)}  \pm \prod\limits_{i\in I_1\cup I_2 -\{i_0\}}c_i^{a_i-1}\prod\limits_{i\in J}c_i^{a_i-1}=  \pm 1.
    \end{align*}
    Therefore $c_{i_0}c_{\pi(i_0)}=c_{i_0}^2= 1 = h_{{\bf e}_{i_0}}$, where the last equality follows from the definition of $I_1$ and $i_0\in I_1$.

    \vskip5pt
    Moreover, note that $a_{i_0}$ is even. Thus
    \begin{align*}q_\pi \prod\limits_{1\le i\le n}c_i^{a_i-1}& = q_\pi c_{i_0}^{a_{i_0}-1}\prod\limits_{i\in \{1,\cdots, n\}-\{i_0\}}c_i^{a_i-1} \\  & =  c_{i_0}^{a_{i_0}-1} (q_\pi\prod\limits_{i\in \{1,\cdots, n\}-\{i_0\}}c_i^{a_i-1}) \\  & = c_{i_0}^{a_{i_0}-1} c_{i_0} = c_{i_0}^{a_{i_0}} = (\pm 1)^{a_{i_0}} =1.
    \end{align*}
   Thus, by Theorem \ref{mainthm},  $A$ admits a bi-Frobenius algebra structure with permutation antipode.

    \vskip5pt

    {\bf Case 2}:  $|I_3|+|I_4|= 0$ and\ $|I_1|= 0$.  In this case  $I = I_2$. By assumption one has $\frac{|J_3|}{2}$ is even.  We choose $c_i$ as follows:
    $$c_i = \begin{cases}
        1, & \mbox{if} \ i\in I = I_2;\\
        1, & \mbox{if} \ i\in J, \ i<\pi(i);\\
        h_{{\bf e}_i}, & \mbox{if} \ i\in J, \ i>\pi(i).
    \end{cases}$$
    By the definition of $I_2$,   $h_{{\bf e}_i} = 1$ for $i\in I_2$. Thus
    $$c_ic_{\pi(i)} = \begin{cases}c_i^2=1=h_{{\bf e}_i}, & \mbox{if} \ \ i\in I = I_2;\\
        1\cdot c_{\pi(i)}= h_{{\bf e}_{\pi(i)}}=h_{{\bf e}_i}, & \mbox{if} \ \ i\in J, \ i<\pi(i);\\
        h_{{\bf e}_i}c_{\pi(i)}h_{{\bf e}_i}\cdot 1 = h_{{\bf e}_i} , & \mbox{if} \ \ i\in J, \ i>\pi(i),
    \end{cases}$$
    i.e., $c_ic_{\pi(i)}=h_{{\bf e}_i}$ for $1\le i\le n$.

    \vskip5pt

    Since $I = I_2$, it follows that  $$q_{\pi}=\prod\limits_{j, k \in I,\ j <k} (q_{kj})^{(a_k-1)(a_j-1)}=1=\prod\limits_{j, k \in I_2,\ j <k} (q_{kj})^{(a_k-1)(a_j-1)}=1$$
    since $ q_{jk}^2 = 1$ for $j,k\in I$ (cf. Lemma \ref{lemmaI}(2)) and $a_i-1$ is even for $i\in I_2$. Hence
    \begin{align*}q_\pi\prod\limits_{1\le i\le n} c_i^{a_i-1}
        &=\prod\limits_{1\le i\le n} c_i^{a_i-1}
        = \prod\limits_{i\in I} c_i^{a_i-1}\prod\limits_{i\in J} c_i^{a_i-1}
        \\&=\prod\limits_{i\in J} c_i^{a_i-1} \xlongequal{{\rm Lem. \ref{lemmaI} (1)}} \prod\limits_{i\in J, i< \pi(i)} c_i^{a_i-1}\prod\limits_{i\in J, i< \pi(i)} c_{\pi(i)}^{a_{\pi(i)}-1}
        \\&= \prod\limits_{i\in J,\ i< \pi(i)} (c_ic_{\pi(i)})^{a_i-1} =\prod\limits_{i\in J,\ i< \pi(i)} h_{{\bf e}_i}^{a_i-1}
        \\& =\prod\limits_{i\in J_1\cup J_2\cup J_4, i< \pi(i)} h_{{\bf e}_i}^{a_i-1}\prod\limits_{i\in J_3, i< \pi(i)} h_{{\bf e}_i}^{a_i-1}
        \\ & = \prod\limits_{i\in J_3,\ i< \pi(i)} h_{{\bf e}_i}^{a_i-1} = \prod\limits_{i\in J_3,\ i<\pi(i)} (-1)^{a_i-1}
        \\
        & =(-1)^{ \frac{|J_3|}{2}}=1.
    \end{align*}
    By Theorem \ref{mainthm},  $A$ admits a bi-Frobenius algebra structure with permutation antipode. This completes the proof.
\end{proof}

\vskip20pt

{\bf Acknowledgement}: The authors thank the anonymous referee for helpful comments and suggestions towards the presentation of the paper.

\vskip5pt

\centerline {\bf Declarations}

\vskip5pt

Conflict of interest. The authors state that there are no competing interests to declare.

\bibliographystyle{alpha}

\end{document}